\documentclass[12pt]{article}

\title{Quantitative homogenization with relatively soft inclusions and interior estimates}

\date{}
\author{B. Chase Russell\footnote{The author is supported in part by NSF grant DMS-1600520.  The author is also grateful for the valuable comments, suggestions, and advice of Z. Shen.}}

\usepackage{amsmath,amsthm,amssymb}



\renewcommand{\S}[3]{\sigma_{#1}({#2})={#3}}

\newcommand{\dashint}{-\!\!\!\!\!\!\displaystyle\int}

\newcommand{\dint}{\displaystyle\int}
\newcommand{\R}{\mathbb{R}}

\newcommand{\X}{\mathbb{X}}
\newcommand{\K}{\mathcal{K}}

\newcommand{\dd}{\partial}

\newcommand{\smooth}[2]{K_{#2}{#1}}
\newcommand{\smoothone}[2]{K_{#2}{\left(#1\right)}}
\newcommand{\smoothtwo}[2]{K^2_{#2}{\left(#1\right)}}
\newcommand{\e}{\epsilon}

\renewcommand{\a}{\alpha}
\renewcommand{\b}{\beta}
\renewcommand{\d}{\delta}
\newcommand{\g}{\gamma}
\newcommand{\z}{\zeta}

\newcommand{\x}{\xi}

\renewcommand{\epsilon}{\varepsilon}

\newcommand{\D}{\mathcal{D}}
\renewcommand{\S}{\mathcal{S}}

\newcommand{\one}{\textbf{1}}
\newcommand{\lrnoe}[1]{k_{\d^{#1}}}
\newcommand{\lr}[1]{k^\e_{\d^{#1}}}

\newcommand{\G}{\Gamma}

\theoremstyle{plain}
\newtheorem{lemm}{Lemma}
\newtheorem{thmm}[lemm]{Theorem}
\newtheorem{corr}[lemm]{Corollary}


\numberwithin{equation}{section}
\numberwithin{lemm}{section}

\allowdisplaybreaks

\begin{document}

\maketitle

\abstract{We establish large-scale interior Lipschitz estimates for solutions to systems of linear elasticity with rapidly oscillating periodic coefficients and Dirichlet boundary conditions in domains with periodically placed inclusions of size $\mathcal{O}(\e)$ and magnitude $\d$ by establishing $H^1$-convergence rates for such solutions.  The interior estimates at the macroscopic scale are derived directly without the use of compactness via a Campanato-type scheme presented by S. Armstrong and C.K. Smart and that was adapted for uniformly elliptic equations in by Armstrong and Z. Shen.
}

\vspace{5mm}

\noindent\textit{MSC2010:} 35B27, 74B05

\noindent\textit{Keywords:} Homogenization; Linear elasticity; Elliptic systems; Lipschitz estimates; interface problems

%
%
%
%
%
\section{Introduction}\label{section1}
%
%
%
%
%

The purpose of this paper is to establish large-scale interior Lipschitz estimates for solutions to systems of linear elasticity with $\e$-periodic coefficients in domains with periodically placed inclusions of size $\mathcal{O}(\e)$ and magnitude $\d$ and to establish $H^1$-convergence rates in periodic homogenization.  To be precise, let $\omega\subseteq\R^d$ be an unbounded domain with 1-periodic structure, i.e., if $\one_+$ denotes the characteristic function of $\omega$, then $\one_+$ is a 1-periodic function in the sense that
\begin{equation}\label{zero}
\one_+(y)=\one_+(z+y)\,\,\,\text{ for }y\in\R^d,\,z\in\mathbb{Z}^d.
\end{equation}
Let $\one_-$ denote the characteristic function of $\R^d\backslash\omega$, and note it also satisfies~\eqref{zero}.  For $\e>0$, $0\leq \d\leq 1$, we consider the operator
\begin{equation}\label{fortythree}
\mathcal{L}_{\e,\d}=-\text{div}(\lr{} A^\e(x)\nabla)=-\dfrac{\dd}{\dd x_i}\left(\lr{}a_{ij}^{\a\b}\left(\dfrac{x}{\e}\right)\dfrac{\dd}{\dd x_j}\right),
\end{equation}
for $x\in\R^d$, where $A^\e(x)=A(x/\e)$, $A(y)=\{a_{ij}^{\a\b}(y)\}_{1\leq i,j,\a,\b\leq d}$ for $y\in\R^d$, $d\geq 2$, $\lr{}=\lrnoe{}(\cdot/\e)$, and
\[
\lrnoe{}(y)=\one_+(y)+\d\one_-(y).
\]
The specific case $\d=0$ is discussed in~\cite{bcr17}.  Naturally, $\lrnoe{}$ is 1-periodic.  We assume the coefficient matrix $A(y)$ is real, measurable, and satisfies the elasticity conditions
\begin{align}
& a_{ij}^{\a\b}(y)=a_{ji}^{\b\a}(y)=a_{\a j}^{i\b}(y), \label{one}\\
& \kappa_1|\x|^2\leq a_{ij}^{\a\b}(y)\x_i^\a\x_j^\b\leq\kappa_2|\x|^2,\label{two}
\end{align}
for a.e $y\in\R^d$ and any symmetric matrix $\x=\{\x_i^\a\}_{1\leq i,\a\leq d}$, where $\kappa_1,\kappa_2>0$.  We also assume $A$ is 1-periodic in the sense of~\eqref{zero}, i.e., 
\begin{equation}\label{four}
A(y)=A(y+z)\,\,\,\text{ for }y\in\R^d,\,z\in\mathbb{Z}^d.
\end{equation}
The coefficient matrix of the systems of linear elasticity describes the relation between the stress and strain a material experiences during relatively small elastic deformations.  Consequently, the elasticity conditions~\eqref{one},~\eqref{two}, and $\d$ should be regarded as physical parameters of the system, whereas $\e$ and~\eqref{four} are clearly geometric characteristics of the system.  

Let $\Omega$ be a bounded domain.  In this paper, we consider the Dirichlet boundary value problem given by
\begin{equation}\label{three}
\begin{cases}
\mathcal{L}_{\e,\d}(u_{\e,\d})=0\,\,\,\text{ in }\Omega, \\
u_{\e,\d}=f\,\,\,\text{ on }\dd\Omega.
\end{cases}
\end{equation}
We say $u_{\e,\d}$ is a weak solution to~\eqref{three} provided
\begin{equation}\label{fortyfour}
\dint_{\Omega}\lr{}a_{ij}^{\a\b\e}\dfrac{\dd u_{\e,\d}^\b}{\dd x_j}\dfrac{\dd w^\a}{\dd x_i}=0\,\,\,\text{ for any }w=\{w^\a\}_\a\in H^1_0(\Omega;\R^d)
\end{equation}
and $u_{\e,\d}-f_{}\in H_0^1(\Omega;\R^d)$.  Note when $\d=0$, Neumann boundary conditions on the perforations are implied.  The boundary value problem~\eqref{three} models relatively small elastic deformations of composite materials reinforced with soft inclusions and subject to zero external body forces~\cite{cioranescu,yellowbook,book2}.  In particular, soft inclusions are comparatively ``weaker'' than the cementing matrix $\omega$, but their embedding can be otherwise advantageous  For example, a material's compressive strength can be indirectly proportional with the increasing volume of soft inclusions but the thermal inertia and energy efficiency may be directly proportional~\cite{falzone}.

For each $\d\in(0,1]$, the existence and uniqueness of a weak solution $u_{\e,\d}\in H^1(\Omega;\R^d)$ to~\eqref{three} for $f_{}\in H^{1/2}(\dd\Omega;\R^d)$ follows easily from the Lax-Milgram theorem and Korn's inequality.  For $\d=0$, the existence and unqieness follows from Lax-Milgram and Korn's inequality for perforated domains~\cite{book1,book2}.  It should be noted that the solution $u_{\e,\d}$ is not bounded uniformly in $H^1(\Omega;\R^d)$.  Indeed, if $\mathcal{L}_{\e,\d}(u_{\e,\d})=0$ in $\Omega$ and $u_{\e,\d}=f$ on $\dd\Omega$, then one may deduce by energy estimates
\[
\|\lr{}u_{\e,\d}\|_{L^2(\Omega)}+\|\lr{}\nabla u_{\e,\d}\|_{L^2(\Omega)}\leq C\|f\|_{H^{1/2}(\dd\Omega)},
\]
where $C$ depends on $\kappa_1$, $\kappa_2$.

One of the main results of this paper is the following theorem.  We emphasize that no smoothness assumptions are required on the coefficients $A$, only the elasticity conditions~\eqref{one},~\eqref{two}, and the periodicity condition~\eqref{four}.  

\begin{thmm}\label{five}
Suppose $A$ satisfies~\eqref{one},~\eqref{two}, and~\eqref{four}.  Let $u_{\e,\d}$ denote a weak solution to $\mathcal{L}_{\e,\d}(u_{\e,\d})=0$ in $B(x_0,R)$ for some $x_0\in\R^d$ and $R>0$.  For $\e\leq r\leq R$, there exists a constant $C$ depending on $d$, $\omega$, $\kappa_1$, and $\kappa_2$ such that
\[
\left(\dashint_{B(x_0,r)}|\lr{}\nabla u_{\e,\d}|^2\right)^{1/2}\leq C\left(\dashint_{B(x_0,R)}|\lr{}\nabla u_{\e,\d}|^2\right)^{1/2}.
\]
for $0\leq \d\leq1$.
\end{thmm}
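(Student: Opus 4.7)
The plan is to adapt the Campanato-type iteration of Armstrong--Smart, later refined for periodic homogenization of uniformly elliptic systems by Armstrong--Shen, to the present weighted elasticity setting. The strategy avoids compactness methods: the Lipschitz bound at macroscopic scales is extracted from a quantitative $H^1$-convergence rate for solutions of $\mathcal{L}_{\e,\d}u_{\e,\d}=0$ (which should be proved in an earlier section of the paper) and the classical interior $C^{1,1}$ estimate for the homogenized constant-coefficient elasticity operator $\mathcal{L}_0$.

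First I would introduce an excess functional
\[
H(r)=\inf_{P}\frac{1}{r}\left(\dashint_{B(x_0,r)}|u_{\e,\d}-P|^2\right)^{1/2},
\]
with the infimum taken over $\R^d$-valued affine functions $P$. The central step is a single-scale decay: there exists $\theta\in(0,1/4)$ such that for every $r$ with $\e\leq \theta r\leq r\leq R$,
\[
H(\theta r)\leq \tfrac{1}{2}H(r)+C\,\eta(\e/r)\left(\dashint_{B(x_0,r)}|\lr{}\nabla u_{\e,\d}|^2\right)^{1/2},
\]
for some modulus $\eta$ with $\eta(t)\to 0$ as $t\to 0^+$. To prove this I would select $P_r$ nearly attaining the infimum in $H(r)$, solve the homogenized Dirichlet problem $\mathcal{L}_0 v=0$ on $B(x_0,r/2)$ with boundary datum $u_{\e,\d}-P_r$, and combine two inputs: the $H^1$-convergence rate, which yields
\[
\|u_{\e,\d}-P_r-v\|_{L^2(B(x_0,r/2))}\leq Cr\,\eta(\e/r)\,\|\lr{}\nabla u_{\e,\d}\|_{L^2(B(x_0,r))},
\]
and interior $C^{1,1}$ regularity for $v$, which produces an affine $P^\prime$ so that $\tfrac{1}{\theta r}(\dashint_{B(x_0,\theta r)}|v-P^\prime|^2)^{1/2}$ is dominated by $C\theta\,(\dashint_{B(x_0,r/2)}|\nabla v|^2)^{1/2}$, which in turn is bounded by $C\,H(r)$ via Caccioppoli applied to $v$. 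Triangle inequality and choosing $\theta$ small enough that $C\theta<1/2$ then deliver the prefactor.

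Next I would iterate the one-step decay dyadically from $R$ down to the microscopic scale. Writing $r_k=\theta^k R$ and $H_k=H(r_k)$, summation of the geometric recursion gives
\[
H_k\leq 2^{-k}H_0+C\sum_{j=0}^{k-1}2^{-(k-1-j)}\eta(\e/r_j)\Phi(r_j),
\]
where $\Phi(r)=(\dashint_{B(x_0,r)}|\lr{}\nabla u_{\e,\d}|^2)^{1/2}$. A weighted Caccioppoli inequality for $\mathcal{L}_{\e,\d}$ allows me to bound $\Phi(r_j)$ by $C\Phi(R)$ uniformly in $j$, so that $H_k\leq C\Phi(R)$ for every $k$ with $r_k\geq \e$. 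Finally I convert this control of $H(r)$ into the desired bound on $\lr{}\nabla u_{\e,\d}$ by applying the weighted Caccioppoli inequality to $u_{\e,\d}-P_r$ on balls of radius $r$.

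The principal obstacle will be establishing the one-step decay under the minimal hypotheses of the theorem. Because $A$ is assumed only measurable and periodic, the $H^1$-convergence rate cannot hold at a power rate in $\e/r$; only the slower modulus $\eta$ is available, and I must verify that the resulting dyadic summation still closes. A second delicate point is uniformity in $\d\in[0,1]$: the weight $\lr{}$ degenerates on the inclusions as $\d\to 0$, so every constant in the weighted Caccioppoli and convergence-rate estimates must be tracked carefully, so that the argument passes to the perforated limit treated in~\cite{bcr17} without any deterioration.
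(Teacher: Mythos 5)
Your overall architecture (weighted Caccioppoli, approximation by a homogenized solution via the $H^1$-convergence rate, interior $C^{1,1}$ for the constant-coefficient operator, a one-step decay, dyadic iteration) is the right one and is essentially what the paper does. However, there are two genuine gaps, one of which is fatal as stated.

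First, your excess functional $H(r)=\inf_P\frac{1}{r}\bigl(\dashint_{B(x_0,r)}|u_{\e,\d}-P|^2\bigr)^{1/2}$ is \emph{unweighted}, whereas the paper defines it with the weight inside: $H_{\e,\d}(r;w)=\frac{1}{r}\inf_{M,q}\bigl(\dashint_{B(r)}|\lr{}(w-Mx-q)|^2\bigr)^{1/2}$. This is not cosmetic. For $\d=0$ the weak solution is determined only on $\Omega\cap\e\omega$; the unweighted $L^2$ norm over the whole ball is not even defined. For $\d>0$ small, the energy estimate controls $\|\lr{}u_{\e,\d}\|_{L^2}$, so $\|u_{\e,\d}\|_{L^2(\text{inclusions})}$ can be as large as $\mathcal{O}(\d^{-1})$, and the unweighted excess blows up as $\d\to 0$. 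Your convergence-rate input is also stated in weighted $L^2$, so it cannot feed an unweighted decay estimate. You must carry $\lr{}$ through the excess functional exactly as Theorem~\ref{twelve} and Lemma~\ref{fiftyone} do.

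Second, the summation step is circular. You bound the cumulative error $\sum_j 2^{-(k-1-j)}\eta(\e/r_j)\Phi(r_j)$ by asserting that ``a weighted Caccioppoli inequality allows me to bound $\Phi(r_j)$ by $C\Phi(R)$,'' but that uniform bound on the gradient averages \emph{is} the statement of Theorem~\ref{five}; Caccioppoli gives only $\Phi(r)\lesssim r^{-1}\|\lr{}u\|_{L^2(B(2r))}$, which does not close without control of the $L^2$ norm of $u$ at every scale. The paper avoids this by expressing the error term of Lemma~\ref{sixtyfive} as $\frac{1}{r}\inf_q\bigl(\dashint_{B(r)}|\lr{}(u-q)|^2\bigr)^{1/2}\le H_{\e,\d}(3r;u)+h(3r)$, where $h(r)=r^{-1}|M_r|$ tracks the slope of the best affine fit, and then invoking a joint iteration lemma (Lemma~\ref{sixtythree}, due to Shen) that propagates $H$ and $h$ simultaneously. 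You need this extra bookkeeping of $h(r)$ or an equivalent device; the naive dyadic recursion in $H$ alone is not self-contained.

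A minor point: you write that under measurable periodic coefficients ``the $H^1$-convergence rate cannot hold at a power rate in $\e/r$.'' In fact the paper obtains a genuine power rate $C(\e/r)^\mu$ with $\mu>0$ from a Meyers-type reverse H\"older estimate (Lemma~\ref{885}); no qualitative modulus is needed, and the closure of the dyadic sum is then automatic.
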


The scale-invariant estimate in Theorem~\ref{five} should be regarded as a Lipschitz estimate at the large scale, e.g., $1\leq r/\e$, and it is proved in Section~\ref{section4}.  Indeed, if Theorem~\ref{five} were to hold also for $0<r<\e$, then by letting $r\to 0$ we would have
\[
|\lr{}\nabla u_{\e,\d}(x_0)|\leq C\left(\dashint_{B(x_0,R)}|\lr{}\nabla u_{\e,\d}|^2\right)^{1/2}
\]
for all $x_0$ in some compact subset of $\Omega$.  In particular, we would have a Lipschitz estimate indepedent of $\d$ for $u_{\e,\d}$ in the connected substrate $\omega$ and a Lipschitz estimate for $u_{\e,\d}$ in the inclusions $\R^d\backslash\omega$ with explicit knowledge of the effect of the parameter $\d$.  Unfortunately,~\eqref{eight} does not hold without more assumptions on the smoothness of the coefficients $A$ and the domain $\omega$.  That is, the periodicity assumptions~\eqref{zero},~\eqref{four} and elasticity conditions~\eqref{one},~\eqref{two} alone contribute to the large-scale average behavior of the solution.

Under additional assumptions that $A$ is H\"older continuous and the domain $\omega$ has a sufficiently regular boundary, an interior Lipschitz estimate at the microscopic scale for solutions to~\eqref{three} follows from local $C^{1,\a}$-estimates for the operator $\mathcal{L}_{1,\d}$.  This follows from a layer potential argument of Escaurazia, Fabes, and Verchota where nontangential estimates were obtained for single equation interface problems~\cite{fabes}.  Yeh modified this same argument to obtain local $W^{1,p}$-estimates and H\"older estimates for~\eqref{three} in the case of single equations with diagonal coefficients~\cite{yeh10,yeh16}.  The necessary modifications for out setting is discussed in Appendix~\ref{section5}.  

Nevertheless, if $A$ is $\a$-H\"older continuous, i.e., there exists a $\a\in (0,1)$ with
\begin{equation}\label{six}
|A(x)-A(y)|\leq C|x-y|^\a\,\,\,\text{ for }x,y\in\R^d
\end{equation}
for some constant $C$ uniform in $x$ and $y$, then the following corollary holds.

\begin{corr}\label{threethousand}
Suppose $A$ satisfies~\eqref{one},~\eqref{two},~\eqref{four}, and~\eqref{six} for some $\a\in(0,1)$.  Suppose $\omega$ is an unbounded $C^{1,\a}$ domain.  Let $u_{\e,\d}$ denote a weak solution to $\mathcal{L}_{\e,\d}(u_{\e,\d})=0$ in $B(x_0,R)$ for some $x_0\in\R^d$ and $R>0$.  Then for $0\leq\d\leq 1$,
\begin{equation}\label{eight}
\|\lr{}\nabla u_{\e,\d}\|_{L^\infty(B(x_0,R/3))}\leq C\left(\dashint_{B(x_0,R)}|\lr{}\nabla u_{\e,\d}|^2\right)^{1/2}
\end{equation}
some constant $C$ independent of $\e$ and $\d$.
\end{corr}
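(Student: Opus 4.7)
The strategy is to bootstrap Theorem~\ref{five} from the macroscopic scale down to a genuine pointwise bound by combining it with a microscopic ($\e$-independent) interior Lipschitz estimate for the interface problem $\mathcal{L}_{1,\d}(v)=0$. This latter ingredient is precisely what the layer-potential arguments of Escauriaza--Fabes--Verchota and Yeh, adapted to the elasticity system in Appendix~\ref{section5}, provide; it is the only step in which the new hypotheses~\eqref{six} on $A$ and the $C^{1,\a}$ regularity of $\omega$ are used.

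Fix $y \in B(x_0, R/3)$ and suppose first that $\e \leq R/2$. Define the rescaled function $v(z) := u_{\e,\d}(y + \e z)$ for $z \in B(0,1)$. A direct chain-rule computation shows that $v$ is a weak solution of an equation of the form $\mathcal{L}_{1,\d}(v) = 0$ in $B(0,1)$, whose coefficients $a_{ij}^{\a\b}(y/\e + z)$ are $\a$-H\"older on each side of the translated interface $\e^{-1}(\dd\omega - y)$, which inherits uniform $C^{1,\a}$ charts from $\dd\omega$ by 1-periodicity. Applying the microscopic interior Lipschitz estimate of Appendix~\ref{section5} at unit scale yields
\[
\sup_{z \in B(0,1/2)} \lrnoe{}(y/\e+z)|\nabla v(z)| \leq C \left(\dashint_{B(0,1)} |\lrnoe{}(y/\e+\cdot)\nabla v|^2 \right)^{1/2}
\]
with $C$ independent of $\d \in [0,1]$. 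Undoing the rescaling (so that $\nabla_z v(z) = \e \nabla u_{\e,\d}(y+\e z)$ and $\lrnoe{}(y/\e + z) = \lr{}(y+\e z)$) gives the pointwise bound
\[
|\lr{}(y)\nabla u_{\e,\d}(y)| \leq C \left(\dashint_{B(y,\e)} |\lr{}\nabla u_{\e,\d}|^2 \right)^{1/2}.
\]
Theorem~\ref{five}, applied on the ball $B(y, R/2) \subseteq B(x_0, R)$ with inner radius $\e$, then bounds the right-hand side by $C(\dashint_{B(x_0,R)}|\lr{}\nabla u_{\e,\d}|^2)^{1/2}$, and taking the supremum over $y$ produces~\eqref{eight}. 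If $\e > R/2$, the rescaling argument is applied directly on $B(x_0, R)$ and no appeal to Theorem~\ref{five} is needed.

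The main obstacle is the microscopic interior Lipschitz estimate itself, with a constant independent of $\d$. Because $\mathcal{L}_{1,\d}$ has ellipticity constant degenerating like $\d$ on the inclusion phase, a straightforward local $C^{1,\a}$ estimate for $\nabla v$ would produce unacceptable $\d^{-1}$ factors; the remedy, following~\cite{fabes,yeh10,yeh16}, is to estimate the weighted gradient $\lrnoe{}\nabla v$ via a layer-potential/Campanato iteration performed separately on each side of the interface, so that the $\d$ in the weight cancels the degeneracy in the soft phase. Verifying that this scheme extends from single equations to the elasticity system~\eqref{three} under only~\eqref{one},~\eqref{two} is the business of Appendix~\ref{section5}; once it is in hand, the rescaling-and-covering argument above is routine.
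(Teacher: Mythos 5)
Your proof is correct and follows essentially the same strategy as the paper: blow up by $\e$ to invoke the microscopic interior Lipschitz estimate for $\mathcal{L}_{1,\d}$ from Appendix~\ref{section5} (Lemma~\ref{seven}), then patch with the large-scale estimate of Theorem~\ref{five} to pass from the $\e$-scale average back to the macroscopic scale. Two small quibbles: Lemma~\ref{seven} gives the pointwise bound on $B(0,1/3)$ rather than $B(0,1/2)$ (immaterial, since you only use the value at the center), and the paper's treatment of the case $\e$ comparable to $R$ attributes the bound to Theorem~\ref{five}, while your cleaner reading correctly notes it comes from a direct application of the small-scale lemma once $R/\e$ is bounded.
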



Interior Lipschitz estimates for the case $\d=1$ were first obtained \textit{indirectly} through the method of compactness by Avellaneda and Lin~\cite{avellaneda}.  The celebrated method of compactness has been applied in other settings~\cite{shu,yeh10,yeh16}.  For example, uniform H\"older estimates for a single elliptic equation with diagonal coefficients in the case $\d=\e$ were obtained indirectly by Yeh with this method~\cite{yeh10}.  The method of compactness is esentially ``proof by contradiction'' and relies on qualitative convergence, which for~\eqref{three} can be ambiguous and complicated.

Interior Lipschitz estimates for the case $\d=1$ were obtained \textit{directly} by Shen~\cite{shen} through a a general scheme developed by Armstrong and Smart~\cite{smart} for establishing large-scale Lipschitz estimates for local minimizers of convex integral functionals arising in homogenzation.  The method was adapted for divergence form elliptic equations with almost-periodic coefficients by Armstrong and Shen~\cite{armstrong2}.  The same estimates were directly proved for the case $\d=0$ by the author of this paper using the general scheme~\cite{bcr17}.  Essentially, in this paper we establish sub-optimal quantitative convergence rates for solutions to~\eqref{three} and use the same scheme.

Hueristically, the scheme is a Campanato-type iteration verifying that on mesoscopic scales the solution $u_{\e,\d}$ is ``flatter.''  If $P_1$ denotes the space of affine functions in $\R^d$ and $H_{\e,\d}(r)$ defined by
\[
H_{\e,\d}(r)=\dfrac{1}{r}\left(\underset{p\in P_1}{\inf}\dashint_{B(r)}|\lr{}(u_{\e,\d}-p)|^2\right)^{1/2}
\]
quantifies a weighted $L^2$-``flatness'' of the solution in some ball $B(r)$ with radius $r$, then we show there exists a $\theta\in (0,1)$ such that
\begin{equation}\label{onethousand}
H_{\e,\d}(\theta r)\leq CH_{\e,\d}(r)+\text{error},
\end{equation}
where the ``error'' term is controllable whenever $\e\leq r$ and the constant $0\leq C<1$ indicates an improvement in ``flatness.''  Indeed,~\eqref{onethousand} follows from the fact that $u_{\e,\d}$---at least in the connected substrate---can be well-approximated in $L^2$ by a solution to a constant coefficient system.  It is known from classical $C^2$ estimates that solutions to constant coefficient systems satisfy~\eqref{onethousand} with no error.  In contrast to compactness methods, showing~\eqref{onethousand} relies on tractable $L^2$-convergence rates of $u_{\e,\d}$, which we will see follows from new results regarding quantitative homogenization in $H^1$.  These sub-optimal $H^1$-convergence rates are stated in Theorem~\ref{twelve} and proved in Section~\ref{section3}.

%

For fixed $\d\geq 0$, the estimate
\begin{equation}\label{twentythree}
\|u_{\e,\d}-u_{0,\d}-\e\chi_{\d}^\e\smoothtwo{(\nabla u_{0,\d})\eta_\e}{\e}\|_{H^1(\Omega)}\leq C\e^{1/2}\|u_{0,\d}\|_{H^1(\dd\Omega)}.
\end{equation}
is known, where $u_{0,\d}\in H^1(\Omega;\R^d)$ denotes the weak solution of the boundary value problem for the homogenized system corresponding to~\eqref{three}, $\chi_\d=\{\chi_{j,\d}^\b\}_{1\leq j,\b\leq d}\in H^1_{\text{per}}(\R^d;\R^d)$ denotes the matrix of correctors associated with the coefficients $\lrnoe{}A$ (see~\eqref{nineteen}), $K_\e$ denotes the smoothing operator at scale $\e$ defined by~\eqref{ten}, and $\eta_\e\in C_0^\infty(\Omega)$ be the cut-off function defined by~\eqref{873}.  However, the explicit dependence of $C$ on the parameter $\d$ is not known.  The estimate~\eqref{twentythree} was proved by the author of this paper in~\cite{bcr17} when $\d=0$.  For $\d=1$, the estimate was proved by Shen in~\cite{shen}.  The following theorem is therefore also a main result of this paper, as it holds for any $0\leq \d\leq 1$ and the constant $C$ is completely independent of the parameter $\d$.

\begin{thmm}\label{twelve}
Let $\Omega$ be a bounded Lipschitz domain and $\omega$ be an unbounded Lipschitz domain with 1-periodic structure.  Suppose $A$ is real, measurable, and satisfies~\eqref{one},~\eqref{two}, and~\eqref{four}.  Let $u_{\e,\d}$ denote a weak solution to~\eqref{three} for $0\leq \d\leq 1$.  There exists a constant $C$ depending on $\kappa_1$, $\kappa_2$, $d$, $\Omega$, and $\omega$ and a $\mu>0$ depending on $\kappa_1$, $\kappa_2$, $d$, and $\Omega$ such that
\begin{align}\label{1596478}
&\|\lr{}r_{\e,\d}\|_{L^2(\Omega)}+\|\lr{}\nabla r_{\e,\d}\|_{L^2(\Omega)}\leq C\e^{\mu}\|f_{}\|_{H^1(\dd\Omega)},
\end{align}
where
\begin{equation}\label{nine}
r_{\e,\d}=u_{\e,\d}-u_{0,\d}-\e\chi_{\d}^\e\smoothtwo{(\nabla u_{0,\d})\eta_\e}{\e}.
\end{equation}
and $u_{0,\d}=f$ on $\dd\Omega$.
\end{thmm}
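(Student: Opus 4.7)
The plan is to derive an equation for $r_{\e,\d}$ that expresses $\mathcal{L}_{\e,\d}(r_{\e,\d})$ as the divergence of an explicit error, then use an energy estimate together with Korn's inequality (uniform in $\d$) to bound $\|k_\d^\e \nabla r_{\e,\d}\|_{L^2}$. Since $\eta_\e$ is supported away from $\partial\Omega$ and $u_{\e,\d}=u_{0,\d}=f$ on $\partial\Omega$, the function $r_{\e,\d}$ lies in $H^1_0(\Omega;\R^d)$, so it is a legitimate test function in the weak formulation~\eqref{fortyfour}.

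First I would introduce the matrix of correctors $\chi_\d$ defined by
\[
-\text{div}\bigl(k_\d A(\nabla \chi_{j,\d}^\b + e_j^\b)\bigr) = 0 \text{ in } \R^d,\quad \chi_{j,\d}^\b \in H^1_{\text{per}}(\R^d;\R^d),
\]
and the associated flux correctors $\phi_\d=\{\phi_{k,ij,\d}^{\a\b}\}$ that antisymmetrize the homogenized flux $k_\d a_{ij}^{\a\b}(\nabla\chi_{j,\d}+e_j)-\hat a_{ij,\d}^{\a\b}$. A direct computation using the homogenized equation $\mathcal{L}_{0,\d}(u_{0,\d})=0$ and the corrector equation for $\chi_\d$ then rewrites $\mathcal{L}_{\e,\d}(r_{\e,\d})$ as the divergence of three distinct error terms: (i) a boundary-layer piece supported in $\{\eta_\e \neq 1\}$ of the form $\hat A_\d\nabla u_{0,\d}(1-\eta_\e)$ and analogous terms from $\phi_\d^\e$; (ii) a smoothing commutator $(\nabla u_{0,\d})\eta_\e - K_\e^2((\nabla u_{0,\d})\eta_\e)$; and (iii) an $O(\e)$ remainder carrying one derivative on the smoothed quantity, weighted by $k_\d^\e \chi_\d^\e$.

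Testing the resulting equation against $r_{\e,\d}$ itself, the ellipticity assumption~\eqref{two} together with $(k_\d^\e)^2 \le k_\d^\e$ yields
\[
\|k_\d^\e \nabla r_{\e,\d}\|_{L^2(\Omega)}^2 \le C\int_\Omega k_\d^\e |F_\e|\,|\nabla r_{\e,\d}|,
\]
so everything reduces to bounding $\|k_\d^\e F_\e\|_{L^2}$ by $\e^\mu \|f\|_{H^1(\dd\Omega)}$ with $\mu,C$ independent of $\d$. For (i) I would use a uniform Meyers-type improvement $\|\nabla u_{0,\d}\|_{L^{2+\sigma}(\Omega)}\le C\|f\|_{H^1(\dd\Omega)}$; combined with Hölder's inequality on the $\e$-neighborhood of $\dd\Omega$ this produces a factor $\e^{\sigma/(2(2+\sigma))}$. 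For (ii) the standard Fourier/finite-difference estimate for $K_\e^2$ gives the same kind of fractional gain when combined with the Meyers bound. For (iii) the weighted $L^2$ boundedness $\|k_\d^\e \chi_\d^\e\|_{L^\infty_y L^2_{Y}}\le C$, which follows directly from the corrector energy identity, provides a uniform in $\d$ control of the $\e$-prefactor term. The Korn inequality in $\omega$ (and for $\d>0$ on all of $\Omega$) provides coercivity of the form in the weighted $H^1$ with a constant independent of $\d$, closing the loop.

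The main obstacle, and the reason the optimal $\e^{1/2}$ rate of~\eqref{twentythree} is traded for an unspecified $\e^\mu$, is the uniformity in $\d$: the homogenized coefficients $\hat A_\d$ degenerate into two different limits as $\d\to 0$ and $\d\to 1$, so one cannot invoke $H^2$ bounds on $u_{0,\d}$ with $\d$-independent constants. Replacing $H^2$-regularity by the Meyers gain $L^{2+\sigma}$ is the essential workaround, and verifying that the Meyers exponent $\sigma$ and constant can be chosen uniformly for the family $\{\hat A_\d\}_{0\le\d\le 1}$ — using only the uniform ellipticity of $\hat A_\d$ inherited from~\eqref{two} and the Lipschitz character of $\Omega$ — is where the real work of the proof lies.
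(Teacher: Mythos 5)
Your overall structure (flux correctors, three-way error decomposition into boundary-layer, smoothing commutator, and $O(\e)$ remainder, then an energy estimate plus Korn) is the same template the paper follows, but the step where you test the resulting identity against $r_{\e,\d}$ itself and claim to close does not work, and this is precisely where the paper departs from the standard argument.

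The problem is a weight mismatch between the two sides of the energy estimate. Ellipticity plus $(k_\d^\e)^2\le k_\d^\e$ does give a lower bound of the form $\|k_\d^\e e(r_{\e,\d})\|_{L^2}^2$ on the left-hand side. But the error terms on the right do not come with a matching $k_\d^\e$ weight: for instance the piece
\[
\int_\Omega\bigl[\widehat{A}_\d-k_\d^\e A^\e\bigr]\bigl[\nabla u_{0,\d}-K_\e^2\bigl((\nabla u_{0,\d})\eta_\e\bigr)\bigr]\cdot\nabla r_{\e,\d}
\]
pairs an $O(1)$ coefficient (on the inclusions $\widehat{A}_\d-\d A^\e$ is bounded below for small $\d$, since $\widehat{A}_\d$ stays uniformly elliptic) against the \emph{unweighted} $\nabla r_{\e,\d}$, so Cauchy--Schwarz produces $\|\nabla r_{\e,\d}\|_{L^2}$, not $\|k_\d^\e\nabla r_{\e,\d}\|_{L^2}$. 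For small $\d$ there is no absorption, and for $\d=0$ the estimate is vacuous on the inclusions. The paper's workaround is to test against \emph{two} different functions rather than one: first $\d r_{\e,\d}\in H_0^1$, which produces $\|\d\nabla r_{\e,\d}\|_{L^2}$ consistently on both sides (after Korn in $\Omega$), and second $P_\e(\one_+^\e r_{\e,\d})\in H^1(\Omega,\Gamma_\e)$, where $P_\e$ is the perforated-domain extension operator; on the substrate $\nabla P_\e(\one_+^\e r_{\e,\d})$ coincides with $\nabla r_{\e,\d}$, and Korn's inequality for perforated domains yields $\|\one_+^\e\nabla r_{\e,\d}\|_{L^2}\le C\e^\mu\|f\|_{H^1(\dd\Omega)}$. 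The sum of the two bounds controls $\|k_\d^\e\nabla r_{\e,\d}\|_{L^2}$. This two-test structure is the essential new idea, and it is what your sketch is missing.

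Two secondary remarks. First, because the paper needs $P_\e(\one_+^\e r_{\e,\d})$, which only vanishes on $\Gamma_\e=\dd\Omega\cap\e\omega$ rather than on all of $\dd\Omega$, the weak formulation must be extended to test functions in $H^1(\Omega,\Gamma_\e;\R^d)$ by inserting the cutoff $\eta_\e$; this produces the additional boundary-layer terms $I_1,I_2$ involving $\nabla(u_{\e,\d}-u_{0,\d})$, which in turn require the Meyers/reverse-H\"older estimate for $u_{\e,\d}$ (Lemma~\ref{885}) to bound $\|\lr{}\nabla u_{\e,\d}\|_{L^2(\mathcal{O}_{4\e})}$. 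Second, you attribute the suboptimal rate $\e^\mu$ to a failure of uniform regularity for $u_{0,\d}$, but the paper shows $\widehat{A}_\d$ is elliptic with constants independent of $\d$ (Lemmas in Subsection~\ref{subsection1}), so $u_{0,\d}$ has full nontangential-maximal-function and interior $C^2$ estimates with $\d$-independent constants, yielding the clean $\e^{1/2}$ for those pieces. The bottleneck is instead the Meyers exponent for $u_{\e,\d}$ in the boundary-layer term, which gives only $\e^{(p-2)/(2p)}$ for some $p>2$ slightly above $2$.
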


Theorem~\ref{twelve} is particularly new for small yet positive $\d$.  Indeed, if $0<\d_0\leq \d\leq 1$, then estimate~\eqref{1596478} follows from work in~\cite{shen} where the constant $C$ depends somehow on $\d_0$.  With regards to the regularity estimates, the theorem essentially establishes the estimate
\begin{equation}\label{twothousand}
\left(\dashint_{B(r)}|\lr{}(u_{\e,\d}-v)|^2\right)^{1/2}\lesssim\left(\dfrac{\e}{r}\right)^{\mu}
\end{equation}
for some $v$ satisfying a constant coefficient system.  The established $C^2$ estimates for $v$ are used to give~\eqref{onethousand}.  The ``error'' term of~\eqref{onethousand} is on the order of the RHS of~\eqref{twothousand}, and so to carry out the scheme it is important that $\mu>0$.  The typical iteration of~\eqref{onethousand} depends on the smallness of the RHS of~\eqref{twothousand} and is written in full detail in~\cite{armstrong2,smart}.  We use a generalization of the iteration process provided by Shen~\cite{shen} (see Lemma~\ref{sixtythree}).

The paper is structured in the following way.  In Section~\ref{section2}, we establish more notation and recall various preliminary results from other works.  The convergence rate presented in Theorem~\ref{twelve} is proved in Section~\ref{section3}.  In Section~\ref{section4}, we prove the interior Lipschitz estimates at the macroscopic scale, i.e., Theorem~\ref{five}.  In Appendix~\ref{section5}, we argue the local interior Lipschitz estimates at the microscopic scale by applying the argument of~\cite{fabes}.  It should be noted throughout that $C$ is a harmless constant that may be change from line to line.  At no point does $C$ depend on $\e$ or $\d$.

%
%
%
%
%
\section{Preliminaries}\label{section2}
%
%
%
%
%

Fix $\varphi\in C_0^\infty(B(0,1))$ with $\varphi\geq 0$ and $\int_{\R^d}\varphi=1$.  Define
\begin{equation}\label{ten}
\smooth{g}{\e}(x)=\dint_{\R^d}g(x-y)\varphi_\e(y)\,dy,\,\,\,g\in L^2(\R^d),
\end{equation}
where $\varphi_\e(y)=\e^{-d}\varphi(y/\e)$.  Note $K_\e$ is a continuous map from $L^2(\R^d)$ to $L^2(\R^d)$.  A proof for the following two lemmas is readily available in~\cite{shen}, and so we do not present either here.  For any function $g$, set $g^\e(\cdot)=g(\cdot/\e)$.

\begin{lemm}\label{thirtysix}
Let $g\in H^1(\R^d)$.  Then
\[
\|g-\smoothone{g}{\e}\|_{L^2(\R^d)}\leq C\e\|\nabla g\|_{L^2(\R^d)},
\]
where $C$ depends only on $d$.
\end{lemm}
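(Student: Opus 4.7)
The plan is to reduce the estimate to a translation bound for $g$ in $L^2$. The key identity is
\[
g(x)-\smoothone{g}{\e}(x)=\dint_{\R^d}\varphi_\e(y)\bigl(g(x)-g(x-y)\bigr)\,dy,
\]
which follows from $\int \varphi_\e =1$. Since $\varphi_\e$ is supported in $B(0,\e)$, the integration variable satisfies $|y|\leq\e$.

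First, for $g\in C^\infty_0(\R^d)$, I would use the fundamental theorem of calculus to write
\[
g(x)-g(x-y)=\dint_0^1 y\cdot\nabla g(x-ty)\,dt,
\]
so that
\[
|g(x)-\smoothone{g}{\e}(x)|\leq \e\dint_{\R^d}\varphi_\e(y)\dint_0^1|\nabla g(x-ty)|\,dt\,dy.
\]
Then by Minkowski's integral inequality applied in the $x$-variable and translation invariance of the $L^2$-norm, the right-hand side is bounded in $L^2_x$ by
\[
\e\dint_{\R^d}\varphi_\e(y)\dint_0^1\|\nabla g(\cdot-ty)\|_{L^2(\R^d)}\,dt\,dy=\e\|\nabla g\|_{L^2(\R^d)}\dint_{\R^d}\varphi_\e(y)\,dy=\e\|\nabla g\|_{L^2(\R^d)},
\]
which yields the claim with $C=1$ for smooth $g$.

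To extend to general $g\in H^1(\R^d)$, I would approximate by a sequence $g_n\in C_0^\infty(\R^d)$ with $g_n\to g$ in $H^1$. Since $K_\e$ is a bounded linear operator on $L^2$ (indeed with norm at most $\|\varphi_\e\|_{L^1}=1$ by Young's inequality), both sides of the inequality pass to the limit, giving the stated bound for all $g\in H^1(\R^d)$.

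There is no real obstacle here; the only slightly delicate point is justifying the use of Minkowski's integral inequality (i.e., the measurability of the integrand as a function of $(x,y,t)$ and the finiteness of the iterated integrals), which is standard for smooth compactly supported $g$. An alternative one-line proof via Plancherel—using $|1-\widehat{\varphi}(\e\x)|\leq C\e|\x|$ from $\widehat{\varphi}(0)=1$ and the smoothness of $\widehat{\varphi}$—would also work, but the direct argument above has the advantage of being coordinate-free and of naturally generalizing to $L^p$ replacements if needed later.
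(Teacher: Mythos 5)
The paper does not present a proof of this lemma; it simply cites \cite{shen} and moves on, so there is no in-paper argument to compare against. Your proof is correct and complete: the identity $g-K_\e g=\int_{\R^d}\varphi_\e(y)\bigl(g(\cdot)-g(\cdot-y)\bigr)\,dy$ from $\int\varphi_\e=1$, the fundamental-theorem-of-calculus bound using $\operatorname{supp}\varphi_\e\subset B(0,\e)$, Minkowski's integral inequality combined with translation invariance of the $L^2$ norm, and the density of $C_0^\infty$ in $H^1$ are exactly the standard ingredients, and the constant $C=1$ you obtain is sharper than what the lemma asserts. The Plancherel alternative you mention (bounding $|1-\widehat{\varphi}(\e\xi)|\leq C\e|\xi|$) is the route taken in the cited reference; your direct argument is somewhat more elementary and, as you observe, has the advantage of transferring immediately to $L^p$ for $1\leq p\leq\infty$.
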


\begin{lemm}\label{twentyseven}
Let $h\in L^2_{\text{loc}}(\R^d)$ be a 1-periodic function.  Then for any $g\in L^2(\R^d)$, 
\[
\|h^\e\smoothone{g}{\e}\|_{L^2(\R^d)}\leq C\|h\|_{L^2(Q)}\|g\|_{L^2(\R^d)},
\]
where $Q=[0,1)^d$ and $C$ depends on $d$ and $\Omega$.
\end{lemm}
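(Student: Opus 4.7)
The strategy is to exploit the scale separation between $h^\e$, which varies at scale $\e$ and averages to $\|h\|_{L^2(Q)}$ on every $\e$-cube, and $K_\e g$, which is essentially constant at scale $\e$ because it is a convolution against a bump of width $\e$. The plan is to decompose $\R^d$ into a grid of cubes of side length $\e$ and carry out the estimate cube by cube.

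First, I would apply Cauchy--Schwarz to the convolution defining $K_\e g$:
\[
|K_\e g(x)|^2 \leq \left(\int_{\R^d}\varphi_\e(y)\,dy\right)\int_{\R^d}|g(x-y)|^2\varphi_\e(y)\,dy \leq \|\varphi_\e\|_{L^\infty}\int_{B(x,\e)}|g(z)|^2\,dz.
\]
Since $\|\varphi_\e\|_{L^\infty} = \e^{-d}\|\varphi\|_{L^\infty}$, this gives a pointwise control of $|K_\e g(x)|^2$ by a local $L^2$-average of $g$ on the ball $B(x,\e)$, with a constant depending only on $d$ and $\varphi$.

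Next, I would partition $\R^d$ into cubes $Q_\e^k = \e(k + Q)$ for $k \in \mathbb{Z}^d$, and let $\tilde{Q}_\e^k$ denote the $\e$-neighborhood of $Q_\e^k$, so that the family $\{\tilde{Q}_\e^k\}$ covers $\R^d$ with overlap bounded by a constant $N_d$ depending only on $d$. For each $k$, the previous pointwise bound uniformly controls $|K_\e g(x)|^2$ on $Q_\e^k$ by $C\e^{-d}\int_{\tilde{Q}_\e^k}|g|^2$. Pulling this constant out of the integral yields
\[
\int_{Q_\e^k}|h^\e(x)|^2|K_\e g(x)|^2\,dx \;\leq\; C\e^{-d}\int_{\tilde{Q}_\e^k}|g|^2\,dz \;\cdot\; \int_{Q_\e^k}|h(x/\e)|^2\,dx.
\]
The $1$-periodicity of $h$ and the change of variables $y = x/\e$ give $\int_{Q_\e^k}|h(x/\e)|^2\,dx = \e^d \|h\|_{L^2(Q)}^2$, which absorbs the factor $\e^{-d}$ exactly.

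Finally, summing over $k \in \mathbb{Z}^d$ and using the bounded overlap of $\{\tilde{Q}_\e^k\}$, I obtain $\|h^\e K_\e g\|_{L^2(\R^d)}^2 \leq C N_d \|h\|_{L^2(Q)}^2 \|g\|_{L^2(\R^d)}^2$, which is the desired inequality. There is no serious obstacle; the only conceptual point is the matching of scales between the oscillation of $h^\e$ and the smoothing of $K_\e g$, after which the estimate is a direct cube-by-cube accounting.
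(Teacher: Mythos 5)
Your proof is correct and is essentially the standard argument for this lemma (the paper itself defers to the reference for the proof): Cauchy--Schwarz converts $K_\e g$ into a local $L^2$-average of $g$ at scale $\e$, and the $\e$-cube decomposition with bounded overlap lets periodicity trade an $\e^d$ against the $\e^{-d}$ from $\|\varphi_\e\|_{L^\infty}$. One minor remark: your argument actually shows the constant depends only on $d$ and $\varphi$, not on $\Omega$, which is the sharper and more natural statement since the inequality is posed on all of $\R^d$.
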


A proof of Lemma~\ref{thirteen} can be found in~\cite{book2}.

\begin{lemm}\label{thirteen}
Let $\Omega\subset\R^d$ be a bounded Lipschitz domain.  For any $g\in H^1(\Omega)$,
\[
\|g\|_{L^2(\mathcal{O}_r)}\leq Cr^{1/2}\|g\|_{H^1(\Omega)},
\]
where $\mathcal{O}_r=\{x\in\Omega\,:\,\text{dist}(x,\dd\Omega)<r\}$ and $C$ depends only on $d$.
\end{lemm}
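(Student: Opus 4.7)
The plan is to reduce the estimate to a local calculation in Lipschitz graph coordinates near $\dd\Omega$. I would first fix a finite atlas $\{U_j\}$ covering $\dd\Omega$ such that, after a rigid motion, each $\Omega\cap U_j$ is described as $\{(y',y_d):y_d>\varphi_j(y')\}$ for a Lipschitz function $\varphi_j$, together with a subordinate partition of unity. Since the number of charts and the Lipschitz constants of the $\varphi_j$ depend only on $\Omega$, it suffices to establish the estimate in a single chart (and for $r$ smaller than a threshold depending on the atlas; for larger $r$ the inequality is trivial since $\mathcal{O}_r\subseteq\Omega$).

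In local coordinates, the relevant portion of $\mathcal{O}_r$ is contained in a slab of the form $\{\varphi_j(y')<y_d<\varphi_j(y')+cr\}$, where $c$ depends only on the Lipschitz constant of $\varphi_j$. For $y_d$ in this slab, I would use the fundamental theorem of calculus to write
\[
g(y',y_d)=g(y',\varphi_j(y'))+\int_{\varphi_j(y')}^{y_d}\dd_d g(y',s)\,ds,
\]
apply Cauchy--Schwarz to the integral term, and then integrate $|g(y',y_d)|^2$ in $y_d$ over the slab of width $cr$. This produces a factor of $r$ on the boundary term and a factor of $r^2$ on the gradient term:
\[
\int_{\varphi_j(y')}^{\varphi_j(y')+cr}|g(y',y_d)|^2\,dy_d\leq Cr\,|g(y',\varphi_j(y'))|^2+Cr^2\int_{\varphi_j(y')}^{\varphi_j(y')+cr}|\dd_d g(y',s)|^2\,ds.
\]

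Integrating in $y'$, summing over the atlas, and invoking the change of variables formula (whose Jacobian is controlled by the Lipschitz constants of the $\varphi_j$) yields
\[
\|g\|_{L^2(\mathcal{O}_r)}^2\leq Cr\|g\|_{L^2(\dd\Omega)}^2+Cr^2\|\nabla g\|_{L^2(\Omega)}^2.
\]
Applying the standard trace inequality $\|g\|_{L^2(\dd\Omega)}\leq C\|g\|_{H^1(\Omega)}$ on the first term and bounding $r^2\leq r$ (for $r$ bounded by the diameter of $\Omega$) on the second gives the desired conclusion.

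There is no real obstacle beyond bookkeeping; the only subtlety is the claim that $C$ depends only on $d$. With this approach $C$ in fact depends on the Lipschitz character of $\Omega$ (via the atlas, the partition of unity, and the trace constant) as well as on $d$, and the statement should be read in that sense.
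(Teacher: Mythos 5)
Your argument is correct and is the standard route: flatten $\dd\Omega$ in Lipschitz charts, write $g$ along vertical segments via the fundamental theorem of calculus (applied first to smooth $g$ and extended by density), and integrate across the thin slab to harvest one power of $r$ on the boundary term and two on the gradient term, then close with the trace inequality. The paper itself does not prove Lemma~\ref{thirteen}; it defers to~\cite{book2}, where the proof runs along the same lines. One small remark worth making: the trace theorem can be avoided entirely by replacing the boundary value $g(y',\varphi_j(y'))$ with an \emph{average} of $g$ over a layer at depth $\mathcal{O}(1)$, i.e.\ write $g(y',\varphi_j(y')+t)=g(y',\varphi_j(y')+s)-\int_t^s\dd_d g\,d\tau$ for $s$ ranging over $(T_0,T_0+1)$ and average in $s$; this produces $\|g\|_{L^2(\mathcal{O}_r)}^2\leq Cr\|g\|_{L^2(\Omega)}^2+Cr\|\nabla g\|_{L^2(\Omega)}^2$ directly, with no trace constant entering. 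Both versions are fine, and the difference is cosmetic. You are also right to flag the constant: as stated in the paper, $C$ cannot depend only on $d$; it necessarily depends on the Lipschitz character of $\Omega$ (through the atlas, the Lipschitz constants of the $\varphi_j$, the partition of unity, and -- in your version -- the trace constant). The paper's attribution is a slight oversight, and the lemma should be read with $C=C(d,\Omega)$.
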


A proof of Lemma~\ref{fourteen} can be found in~\cite{yellowbook}.

\begin{lemm}\label{fourteen}
Suppose $B=\{b_{ij}^{\a\b}\}_{1\leq i,j,\a,\b\leq d}$ is 1-periodic and satisfies $b_{ij}^{\a\b}\in L_{\text{loc}}^2(\R^d)$ with
\[
\dfrac{\dd}{\dd y_i}b_{ij}^{\a\b}=0,\,\,\,\text{ and }\,\,\,\dint_Q b_{ij}^{\a\b}=0.
\]
There exists $\pi=\{\pi_{kij}^{\a\b}\}_{1\leq i,j,k,\a,\b\leq d}$ with $\pi_{kij}^{\a\b}\in H^1_{\text{loc}}(\R^d)$ that is 1-periodic and satisfies
\[
\dfrac{\dd}{\dd y_k}\pi_{kij}^{\a\b}=b_{ij}^{\a\b}\,\,\,\text{ and }\,\,\,\pi_{kij}^{\a\b}=-\pi_{ikj}^{\a\b}.
\]
\end{lemm}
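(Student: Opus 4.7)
The plan is to construct $\pi$ via a standard ``stream function'' / flux corrector argument, reducing the problem to a periodic Poisson equation. Fix indices $j,\alpha,\beta$ (these remain passive throughout) and write $b_i := b_{ij}^{\alpha\beta}$; the hypotheses become $\partial_i b_i = 0$ and $\int_Q b_i = 0$, with $b_i \in L^2_{\mathrm{loc}}(\R^d)$ being $1$-periodic. I would first solve the cell problem
\begin{equation*}
\Delta g_i = b_i \quad \text{on } \R^d, \qquad g_i \text{ is } 1\text{-periodic}, \qquad \int_Q g_i = 0,
\end{equation*}
for each $i=1,\ldots,d$. Existence and uniqueness of $g_i \in H^1_{\mathrm{per}}(\R^d)$ with zero mean follows from the Lax--Milgram theorem on the quotient space $H^1_{\mathrm{per}}/\R$, using the compatibility condition $\int_Q b_i = 0$; interior elliptic regularity for the Laplacian then upgrades to $g_i \in H^2_{\mathrm{loc}}(\R^d)$ since $b_i \in L^2_{\mathrm{loc}}$.

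Next I define the candidate
\begin{equation*}
\pi_{kij}^{\alpha\beta} := \partial_k g_i - \partial_i g_k,
\end{equation*}
restoring the suppressed indices. The antisymmetry $\pi_{kij}^{\alpha\beta} = -\pi_{ikj}^{\alpha\beta}$ is immediate from the definition, and $\pi_{kij}^{\alpha\beta} \in H^1_{\mathrm{loc}}(\R^d)$ follows from $g_i \in H^2_{\mathrm{loc}}$. Periodicity of $\pi$ follows from that of $g_i$. For the divergence identity, compute
\begin{equation*}
\partial_k \pi_{kij}^{\alpha\beta} = \partial_k\partial_k g_i - \partial_k\partial_i g_k = \Delta g_i - \partial_i(\partial_k g_k) = b_i - \partial_i(\partial_k g_k).
\end{equation*}

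The only nontrivial point is to verify that $\partial_k g_k \equiv 0$. Applying $\partial_i$ to the defining equation gives $\Delta(\partial_k g_k) = \partial_k b_k = 0$, so $\partial_k g_k$ is a $1$-periodic harmonic function on $\R^d$, hence a constant by Liouville; that constant must vanish because $\int_Q \partial_k g_k = 0$ by periodicity (divergence theorem on $Q$ with matching opposite faces). Substituting back yields $\partial_k \pi_{kij}^{\alpha\beta} = b_{ij}^{\alpha\beta}$, completing the construction. The only step that deserves care is the solvability of the periodic Poisson problem at the $L^2_{\mathrm{loc}}$ level of the data, but this is entirely standard; conceptually the whole argument amounts to noting that a mean-zero, divergence-free periodic vector field is the ``curl'' of a periodic antisymmetric potential, which is precisely the content of the lemma.
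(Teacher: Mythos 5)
Your argument is correct and is the standard one used in the cited reference (and throughout the periodic homogenization literature): solve the periodic Poisson cell problem $\Delta g_i = b_{ij}^{\alpha\beta}$, set $\pi_{kij}^{\alpha\beta} = \partial_k g_i - \partial_i g_k$, and use periodicity plus the divergence-free hypothesis to show $\partial_k g_k$ is a harmonic periodic function of mean zero, hence zero. Since the paper simply defers to~\cite{yellowbook} for the proof, and that reference proceeds by exactly this flux-corrector construction, your proposal matches the intended argument.
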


If $\d>0$, it can be shown that the weak solution to~\eqref{three} converges weakly in $H^1(\Omega;\R^d)$ and consequently strongly in $L^2(\Omega;\R^d)$ as $\e\to 0$ to some $u_{0,\d}$, which is a solution of a constant-coefficient equation in the domain $\Omega$ (see~\cite{monograph,yellowbook,book2} and references therein).  Indeed, the following theorem is well-known.

\begin{thmm}\label{fifteen}
Suppose $\Omega$ is a bounded Lipschitz domain and that $A$ satisfies ~\eqref{one},~\eqref{two}, and~\eqref{four}.  Let $u_{\e,\d}$ satisfy $\mathcal{L}_{\e,\d}(u_{\e,\d})=0$ in $\Omega$, and $u_{\e,\d}=f$ on $\dd\Omega$ for some fixed $\d>0$.  Then there exists a $u_{0,\d}\in H^1(\Omega;\R^d)$ such that
\[
u_{\e,\d}\rightharpoonup u_{0,\d}\,\,\,\text{ weakly in }H^1(\Omega;\R^d).
\]
Consequently, $u_{\e,\d}\to u_{0,\d}$ strongly in $L^2(\Omega;\R^d)$.
\end{thmm}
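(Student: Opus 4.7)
The plan is to follow the classical Tartar-type energy method of periodic homogenization, exploiting the fact that for \emph{fixed} $\d>0$ the operator $\mathcal{L}_{\e,\d}$ is uniformly elliptic (with ellipticity constants $\d\kappa_1,\kappa_2$), so the argument reduces essentially to the standard periodic case.

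First, I would establish a uniform (in $\e$) $H^1$ bound on $u_{\e,\d}$. Since $\lrnoe{}\geq \d>0$, the coefficient matrix $\lrnoe{}A$ satisfies a uniform ellipticity condition, and the standard energy estimate combined with Korn's inequality on $\Omega$ and a harmonic extension of $f$ yields $\|u_{\e,\d}\|_{H^1(\Omega;\R^d)}\leq C(\d)\|f\|_{H^{1/2}(\dd\Omega;\R^d)}$. Here the constant is allowed to depend on $\d$; only the existence of the weak limit is sought. By the Banach--Alaoglu theorem there is a subsequence $\e_k\to 0$ and a function $u_{0,\d}\in H^1(\Omega;\R^d)$ with $u_{\e_k,\d}\rightharpoonup u_{0,\d}$ weakly in $H^1$, and with $u_{0,\d}=f$ on $\dd\Omega$ by continuity of the trace. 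The Rellich--Kondrachov compact embedding $H^1(\Omega;\R^d)\hookrightarrow L^2(\Omega;\R^d)$ then gives $u_{\e_k,\d}\to u_{0,\d}$ strongly in $L^2$.

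The main step is to identify the limit equation. Let $\hat A_\d$ denote the homogenized tensor associated with the periodic coefficients $\lrnoe{}A$, defined through the correctors $\chi_\d\in H^1_{\mathrm{per}}(\R^d;\R^d)$ for $\mathcal{L}_{1,\d}$; under the hypotheses~\eqref{one},~\eqref{two},~\eqref{four} these correctors exist (by Lax--Milgram on the unit cell) and $\hat A_\d$ satisfies the Legendre--Hadamard condition. I would then run Tartar's method of oscillating test functions: for each $\e$, insert the test function $w_\e=\varphi\,(P + \e\chi_\d^{*,\e}\nabla P)$ into the weak formulation~\eqref{fortyfour}, where $\varphi\in C_0^\infty(\Omega)$, $P$ is a (vector) affine function, and $\chi_\d^*$ is the corrector for the adjoint operator. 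Using periodicity, the weak $L^2$ convergence of 1-periodic functions to their cell average, and the strong $L^2$ convergence of $u_{\e_k,\d}$, one passes to the limit and concludes that $u_{0,\d}$ is a weak solution of the constant-coefficient homogenized system $-\mathrm{div}(\hat A_\d\nabla u_{0,\d})=0$ in $\Omega$ with $u_{0,\d}=f$ on $\dd\Omega$.

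Finally, because $\hat A_\d$ is strongly elliptic, the homogenized Dirichlet problem has a unique solution; hence the limit $u_{0,\d}$ is independent of the subsequence, and a standard subsequence argument upgrades the convergence to hold for the full family $\e\to 0$. The main obstacle, as always in this argument, is the identification of the weak limit of the flux $\lr{}A^\e\nabla u_{\e,\d}$: one cannot pass directly because both factors converge only weakly, so the compensated-compactness input provided by the divergence-free auxiliary fields built from $\chi_\d^*$ (together with Lemma~\ref{fourteen} to write them as $\partial_k \pi_{kij}^{\a\b}$ with antisymmetric potentials) is essential. Everything else is a routine application of weak compactness, Rellich, and uniqueness for the homogenized system.
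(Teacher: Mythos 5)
Your proof is correct and takes essentially the route the paper intends. The paper does not prove Theorem~\ref{fifteen} directly but defers it to the standard literature, noting precisely the observation you open with — that for fixed $\d>0$ the coefficients $k_\d^\e A^\e$ are uniformly elliptic, so the statement reduces to classical periodic homogenization — and your Tartar-method sketch is simply a fleshing-out of that citation.
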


For a proof of the previous theorem, see~\cite[Section 10.3]{book1} and notice $k_\d^\e A^\e$ is uniformly elliptic in $\R^d$ for $\d>0$.  The function $u_{0,\d}$ is called the homogenized solution and the boundary value problem it solves is the homogenized system corresponding to~\eqref{three}.

Theorem~\ref{twentyfive} is a typical result in the study of periodically perforated domains, i.e., the case when $\d=0$.  For a proof of the following theorem, consult the work of Acerbi, Piat, Dal Maso, and Percivale~\cite{acerbi}.  Let $\Omega_\e=\Omega\cap\e\omega$ and for $p\in (1,\infty)$ let $W^{1,p}(\Omega_\e,\Gamma_\e;\R^d)$ denote the closure in $W^{1,p}(\Omega_\e;\R^d)$ of $C^\infty(\R^d;\R^d)$ function vanishing on $\Gamma_\e:=\dd\Omega_\e\cap\dd\Omega$.

\begin{thmm}\label{twentyfive}
Let $\Omega$ and $\Omega_0$ be bounded Lipschitz domains with $\overline{\Omega}\subset\Omega_0$ and $\text{dist}(\dd\Omega_0,\Omega)>1$.  Let $p\in (1,\infty)$.  For $\e$ small enough, there exists a linear extension operator $P_\e: W^{1,p}(\Omega_\e,\Gamma_\e;\R^d)\to W_0^{1,p}(\Omega_0;\R^d)$ such that 
\begin{align}
&P_\e w=w\,\,\,\text{a.e. in }\Omega_\e \\
&\|P_\e w\|_{L^p(\Omega_0)}\leq C_1\|w\|_{L^p(\Omega_\e)}, \label{}\\
&\|\nabla P_\e w\|_{L^p(\Omega_0)}\leq C_2\|\nabla w\|_{L^p(\Omega_\e)}, \label{fortynine}
\end{align}
for some constants $C_1$, $C_2$ depending on $\Omega$, $\omega$, $d$, and $p$.
\end{thmm}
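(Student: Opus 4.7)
The plan is to build $P_\e$ by assembling, across all the periodic $\e$-cells meeting $\Omega_0$, a fixed reference-cell extension operator, and to handle cells touching $\dd\Omega$ by first extending $w$ by zero across $\G_\e$ (which is legitimate since $w\in W^{1,p}(\Omega_\e,\G_\e;\R^d)$). The hypothesis $\text{dist}(\dd\Omega_0,\Omega)>1$ provides a buffer in which a standard Stein extension across $\dd\Omega$ into $\Omega_0\setminus\Omega$ can be performed after all perforations have been filled in.

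The crux of the argument is a single linear operator $P\colon W^{1,p}(Y_\ast\cap\omega;\R^d)\to W^{1,p}(Y_\ast;\R^d)$ on an enlarged reference cube $Y_\ast$, chosen large enough that every connected component of $Y_\ast\setminus\omega$ lies compactly in the interior of $Y_\ast$, satisfying $Pw=w$ on $Y_\ast\cap\omega$, $\|Pw\|_{L^p(Y_\ast)}\le C\|w\|_{L^p(Y_\ast\cap\omega)}$, and, crucially, $\|\nabla Pw\|_{L^p(Y_\ast)}\le C\|\nabla w\|_{L^p(Y_\ast\cap\omega)}$ with no $\|w\|_{L^p}$ term on the right. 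This gradient bound is produced by the subtract-the-mean device: on each hole $H\subset Y_\ast\setminus\omega$, fix a Lipschitz collar $U_H\Subset\omega$ around $\dd H$, apply a Calder\'on extension for Lipschitz domains to $w-(w)_{U_H}$ across $\dd H$, and then add back the constant $(w)_{U_H}$. The Poincar\'e inequality on $U_H$ then converts $\|\nabla w\|_{L^p(U_H)}$ into control of $\|w-(w)_{U_H}\|_{L^p(U_H)}$, which is what propagates through the extension.

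With $P$ fixed, set $(P_\e w)(x)=(P\tilde w_k)(x/\e - k)$ on the cell $\e(k+Y_\ast)$, where $\tilde w_k(y)=w(\e(y+k))$. Because $\dd Y_\ast\subset\omega$, the identity $Pw=w$ on $Y_\ast\cap\omega$ forces adjacent rescaled extensions to agree on their common face, so $P_\e w$ patches together into a $W^{1,p}$ function. Changing variables cell-by-cell turns the reference estimates into scale-invariant bounds, and summing over the cells meeting $\Omega_0$ yields~(2.7) and~(2.8) with constants depending only on $\Omega$, $\omega$, $d$, and $p$, provided $\e$ is small enough that the enlarged $\e$-cells fit inside $\Omega_0$.

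The principal obstacle is precisely the scale-invariant gradient estimate on the reference cell: a direct Calder\'on extension would control $\|\nabla Pw\|_{L^p(Y_\ast)}$ only by $\|w\|_{W^{1,p}(Y_\ast\cap\omega)}$, and the $L^p$ part would blow up like $\e^{-1}$ under rescaling, destroying the $\e$-uniformity in~\eqref{fortynine}. The subtract-the-mean trick removes this defect, but relies on each hole admitting a Lipschitz collar inside $\omega$ with Poincar\'e constant independent of the particular hole; this is where the periodic Lipschitz structure of $\omega$ is essential. A secondary subtlety is engineering $Y_\ast$ so that $\dd Y_\ast\subset\omega$, since otherwise no scale-invariant gluing of adjacent cell extensions is available without a partition-of-unity argument that would again spoil the gradient bound.
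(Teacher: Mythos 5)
The paper does not actually prove this theorem---it is cited from the work of Acerbi, Chiado Piat, Dal Maso, and Percivale---so there is no in-text proof to compare against. Your sketch is nonetheless essentially the standard construction of that extension operator, and the crucial idea is correctly identified: the subtract-the-mean device on a collar around each hole is what produces the scale-invariant bound $\|\nabla P_\e w\|_{L^p}\leq C\|\nabla w\|_{L^p(\Omega_\e)}$, which a naive Calder\'on extension cannot yield after rescaling; extension by zero across $\Gamma_\e$ is legitimate precisely because $w\in W^{1,p}(\Omega_\e,\Gamma_\e;\R^d)$; and the buffer $\text{dist}(\dd\Omega_0,\Omega)>1$ leaves room for the final Stein extension.

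One step deserves more care. In your cell-by-cell packaging, the sets $\e(k+Y_\ast)$ overlap for adjacent $k$, since $Y_\ast$ is strictly larger than the unit cell, so ``agree on their common face'' does not literally apply. What actually makes $P_\e w$ well defined is that the reference operator $P$, if built hole-by-hole via pairwise disjoint collars, is \emph{hole-local}: its value on a hole $H$ depends only on $w$ restricted to the collar $U_H\subset\omega$. Two cell-based extensions of the same hole therefore agree because they compute the same thing from the same data. Equivalently, one can restrict each cell-based extension to the un-enlarged cell $\e(k+Y)$ and check agreement across $\e\,\dd(k+Y)$, for which one wants a fundamental domain $Y$ with $\dd Y\subset\omega$; this requires a short argument using periodicity and the separation of holes, since the literal cube $[0,1]^d$ may have boundary passing through holes. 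Both points are routine, and with them in place your argument matches the one in the cited literature.
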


If $\d=0$, then it is difficult to qualitatively discuss the convergence of $u_{\e,0}$, although in this case it is discussed in~\cite{siam,monograph,book1,cioranescu} and many others.  Quantitatively, however, we have the estimate~\eqref{twentythree}.  A stronger estimate for this case is proved in~\cite{bcr17}.  The homogenized system of elasticity corresponding to~\eqref{three} in the case $\d=0$ and of which $u_{0,0}$ is a solution is given by
\begin{equation}\label{sixteen}
\begin{cases}
\mathcal{L}_{0,0}\left({u_{0,0}}\right)={0}\,\,\,\text{ in }\Omega \\
u_{0,0}=f_{}\,\,\,\text{ on }\dd\Omega,
\end{cases}
\end{equation}
where $\mathcal{L}_{0,0}=-\text{div}(\widehat{A}_0\nabla)$, $\widehat{A}_0=\{\widehat{a}_{ij,0}^{\a\b}\}_{1\leq i,j,\a,\b\leq d}$ denotes a constant matrix given by
\begin{equation}\label{seventeen}
\widehat{a}_{ij,0}^{\a\b}=\dashint_{Q}k_0a_{ik}^{\a\g}\dfrac{\dd\X_{j,0}^{\g\b}}{\dd y_k},
\end{equation}
and $\X_{j,0}^\b=\{\X_{j,0}^{\g\b}\}_{1\leq\g\leq d}$ denotes the solution to the following variational problem
\begin{equation}\label{eighteen}
\begin{cases}
\dint_{Q}k_0a_{ik}^{\a\g}\dfrac{\dd\X_{j,0}^{\g\b}}{\dd y_k}\dfrac{\dd \phi^\a}{\dd y_i}\,dy=0,\,\,\,\text{ for any }\phi=\{\phi^\a\}_\a\in H^1_{\text{per}}(Q;\R^d) \\
\chi_{j,0}^\b:=\X_{j,0}^\b-y_je^\b\text{ is 1-periodic},\,\,\,\dint_{Q}k_0\chi_{j,0}^\b=0,
\end{cases}
\end{equation}
where $e^\b\in\R^d$ has a 1 in the $\b$th position and 0 in the remaining positions and $H^1_{\text{per}}(Q;\R^d)$ denotes the closure of $C^\infty_{\text{per}}(\R^d;\R^d)$ functions in the $H^1(Q;\R^d)$ norm.  For details on the existence of solutions to~\eqref{eighteen}, see~\cite{book2}.  The functions $\chi^{\b}_{j,0}$ are referred to as the first-order correctors for the system~\eqref{three} with $\d=0$.  The coefficients $\widehat{A}_0$ are known to be uniformly elliptic.  Indeed, we have the following lemma.  For a proof, see~\cite{book2}.

\begin{lemm}\label{twentysix}
Suppose $A$ satisfies~\eqref{one},~\eqref{two}, and~\eqref{four}.  If $\X_{j,0}^\b=\{\X_{j,0}^{\g\b}\}$ are defined by~\eqref{eighteen}, then $\widehat{A}_0=\{a_{ij,0}^{\a\b}\}$ defined by~\eqref{seventeen} satisfies
\[
\widehat{a}_{ij,0}^{\a\b}=\widehat{a}_{\a j,0}^{i\b}=\widehat{a}_{ji,0}^{\a\b}
\]
for $1\leq i,j,\a,\b\leq d$ and
\[
\widehat{\kappa}_1|\x|^2\leq \widehat{a}_{ij,0}^{\a\b}\x_i^\a\x_j^\b\leq \widehat{\kappa}_2|\x|^2
\]
for some $\widehat{\kappa}_1,\widehat{\kappa}_2>0$ and any symmetric matrix $\x=\{\x_i^\a\}_{i,\a}$.
\end{lemm}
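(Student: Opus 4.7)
The plan is to follow the standard energy argument for the homogenized elasticity tensor, keeping track of the fact that the weight $k_0 = \one_+$ makes the variational problem~\eqref{eighteen} effectively posed on the perforated period cell $Q\cap\omega$. First I would derive a bilinear energy representation of $\widehat{A}_0$: testing~\eqref{eighteen} for $\X_{j,0}^\b$ with the periodic test function $\phi = \chi_{i,0}^\a$ and then substituting $\X_{i,0}^{\sigma\a} = \chi_{i,0}^{\sigma\a} + y_i\delta^{\sigma\a}$ converts~\eqref{seventeen} into the symmetric expression
\[
\widehat{a}_{ij,0}^{\a\b} = \dashint_Q k_0\, a_{lk}^{\sigma\g}\,\ddd{\X_{j,0}^{\g\b}}{y_k}\,\ddd{\X_{i,0}^{\sigma\a}}{y_l}.
\]

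For the symmetry claims, I would read the symmetries of $\widehat{A}_0$ directly off this representation. The major symmetry $a_{lk}^{\sigma\g} = a_{kl}^{\g\sigma}$ coming from~\eqref{one}, combined with relabeling the dummy indices $(l,\sigma)\leftrightarrow(k,\g)$, immediately gives $\widehat{a}_{ij,0}^{\a\b} = \widehat{a}_{ji,0}^{\b\a}$. Pairing this identity with the minor symmetry $a_{lk}^{\sigma\g} = a_{\sigma k}^{l\g}$ applied to each factor then produces the full list $\widehat{a}_{ij,0}^{\a\b} = \widehat{a}_{\a j,0}^{i\b} = \widehat{a}_{ji,0}^{\a\b}$ claimed in the lemma.

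For the ellipticity, fix a symmetric matrix $\x = \{\x_i^\a\}$ and set $\Phi_\x := \x_j^\b\X_{j,0}^\b$. Then $\Phi_\x$ is the unique minimizer of $\phi\mapsto\int_Q k_0 A\nabla\phi:\nabla\phi$ among $\phi$ satisfying $\phi - \x y\in H^1_{\text{per}}(Q;\R^d)$, so comparing with the affine competitor $\x y$ and using the upper bound in~\eqref{two} on the symmetric $\x$ yields
\[
\x_i^\a\widehat{a}_{ij,0}^{\a\b}\x_j^\b = \dashint_Q k_0\, A\nabla\Phi_\x:\nabla\Phi_\x \leq \dashint_Q k_0\, A\x:\x \leq \kappa_2 |\x|^2.
\]
The lower bound starts from the fact that conditions~\eqref{one} and~\eqref{two} together imply $A\eta:\eta \geq \kappa_1|\text{sym}(\eta)|^2$ for any matrix $\eta$, so with $\chi_\x := \x_j^\b\chi_{j,0}^\b$,
\[
\x_i^\a\widehat{a}_{ij,0}^{\a\b}\x_j^\b \geq \kappa_1\dashint_{Q\cap\omega}|\text{sym}(\x+\nabla\chi_\x)|^2.
\]

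I expect this lower bound to be the main obstacle. Without the weight, periodicity of $\chi_\x$ and Jensen's inequality would immediately produce $|\x|^2 \leq \dashint_Q|\text{sym}(\x+\nabla\chi_\x)|^2$, but because $k_0$ vanishes on the inclusions the values of $\chi_\x$ on $Q\setminus\omega$ are essentially unconstrained. The remedy is the periodic analogue of the extension operator in Theorem~\ref{twentyfive}: extend $\chi_\x|_{Q\cap\omega}$ to a periodic $\tilde\chi_\x\in H^1_{\text{per}}(Q;\R^d)$ satisfying $\int_Q|\nabla\tilde\chi_\x|^2 \leq C\int_{Q\cap\omega}|\nabla\chi_\x|^2$, invoke Korn's second inequality on the perforated cell $Q\cap\omega$ (modulo a rigid motion, with a constant depending only on $\omega$) to bound $\int_{Q\cap\omega}|\nabla\chi_\x|^2$ by $\int_{Q\cap\omega}|\text{sym}(\nabla\chi_\x)|^2$, and then apply Jensen on the full cell $Q$ to $\x+\nabla\tilde\chi_\x$ to extract $|\x|^2$. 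The uniformity of the extension and Korn constants in $\omega$ is precisely what makes $\widehat{\kappa}_1$ depend on the geometry of $\omega$.
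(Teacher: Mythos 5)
The paper does not actually prove this lemma; it simply cites the reference for the result, so there is no in-paper argument to compare against. Evaluating your proposal on its own merits, the symmetric bilinear representation, the upper bound, and the reduction of the lower bound to a Jensen-type estimate are all on the right track, and you correctly identify the lower bound as the place where the real work happens. But there are two concrete problems.

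On the symmetries: the identity $\widehat{a}_{ij,0}^{\a\b} = \widehat{a}_{\a j,0}^{i\b}$ is actually immediate from the raw definition~\eqref{seventeen} together with $a_{ik}^{\a\g} = a_{\a k}^{i\g}$, so routing it through the symmetric bilinear form is a detour. More importantly, if you do want to read it off the bilinear form, you must first establish the corrector symmetry $\chi_{j,0}^\b = \chi_{\b,0}^j$ (which follows from the symmetries of $A$ plus uniqueness in~\eqref{eighteen}); ``applying the minor symmetry to each factor'' does not by itself permute the index $i$ with $\a$, because $i$ and $\a$ live on the corrector $\X_{i,0}^{\sigma\a}$, not on $A$. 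Also, the third equality $\widehat{a}_{ji,0}^{\a\b}$ as printed is almost certainly a typo for $\widehat{a}_{ji,0}^{\b\a}$ (the isotropic Lam\'e tensor already fails $a_{ij}^{\a\b} = a_{ji}^{\a\b}$), so you should not try to derive the literal statement.

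The more serious gap is in the lower bound, which is the heart of the lemma. Your plan is to (i) extend $\chi_\x|_{Q\cap\omega}$ with $\|\nabla\tilde\chi_\x\|_{L^2(Q)} \le C\|\nabla\chi_\x\|_{L^2(Q\cap\omega)}$, (ii) invoke Korn modulo a rigid motion on $Q\cap\omega$ to control $\|\nabla\chi_\x\|_{L^2(Q\cap\omega)}$ by $\|\mathrm{sym}(\nabla\chi_\x)\|_{L^2(Q\cap\omega)}$, and (iii) apply Jensen to $\x + \nabla\tilde\chi_\x$ on $Q$. This does not close. First, Korn modulo rigid motions yields only $\|\nabla\chi_\x - R\|_{L^2(Q\cap\omega)} \le C\|\mathrm{sym}(\nabla\chi_\x)\|_{L^2(Q\cap\omega)}$ for some antisymmetric $R$, and the normalization $\int_Q k_0\chi_\x = 0$ kills translations but not rotations, so $R$ does not drop out for free once you have restricted to $Q\cap\omega$. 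Second, even granting the inequality without the rigid motion, the chain of estimates only produces something of the form
\[
|\x|^2 \le \dashint_{Q\cap\omega}|\mathrm{sym}(\x+\nabla\chi_\x)|^2 + C'\dashint_{Q\cap\omega}|\mathrm{sym}(\x+\nabla\chi_\x)|^2 + C'|\x|^2,
\]
and the constant $C'$ (the product of the extension and Korn constants) is not small, so you cannot absorb the $C'|\x|^2$ term. The standard remedy is to use not the plain extension of Theorem~\ref{twentyfive} but the elasticity (Korn-type) extension: a periodic extension operator with $\|\mathrm{sym}(\nabla\tilde v)\|_{L^2(Q)} \le C\|\mathrm{sym}(\nabla v)\|_{L^2(Q\cap\omega)}$. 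Applying this directly to $\Phi_\x$ (the extension modifies things only on the compactly contained inclusions, so $\tilde\Phi_\x - \x y$ remains periodic), one has $\dashint_Q \mathrm{sym}(\nabla\tilde\Phi_\x) = \x$, and Jensen gives $|\x|^2 \le \dashint_Q|\mathrm{sym}(\nabla\tilde\Phi_\x)|^2 \le C\dashint_{Q\cap\omega}|\mathrm{sym}(\nabla\Phi_\x)|^2 \le C\kappa_1^{-1}\x_i^\a\widehat{a}_{ij,0}^{\a\b}\x_j^\b$ in one step, with no circularity. Without that stronger extension theorem, your argument as written does not prove $\widehat{\kappa}_1 > 0$.
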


For $\d\geq 0$, let $\chi_{j,\d}^\b=\{\chi_{j,\d}^{\g\b}\}_{1\leq\g\leq d}$ denote the solution to the following variational problem
\begin{equation}\label{nineteen}
\begin{cases}
\dint_{Q}\lrnoe{}a_{ik}^{\a\g}\dfrac{\dd\X_{j,\d}^{\g\b}}{\dd y_k}\dfrac{\dd \phi^\a}{\dd y_i}\,dy=0,\,\,\,\text{ for any }\phi\in H^1_{\text{per}}(Q;\R^d) \\
\chi_{j,\d}^\b:=\X_{j,\d}^\b-y_je^\b\text{ is 1-periodic},\,\,\,\dint_{Q}\chi_{j,\d}^\b=0,
\end{cases}
\end{equation}
which coincides with~\eqref{eighteen} if $\d=0$.  To show the existence and uniqueness of the solutions $\chi_{j,\d}^\b$, apply the Lax-Milgram theorem to the space $H^1_{\text{per}}(Q;\R^d)$.  As a consequence, with the appropriate choice of test functions, one can obtain the bound
\[
\|\lrnoe{}\chi_{j,\d}^\b\|_{L^2(Q)}+\|\lrnoe{}\nabla \chi_{j,\d}^\b\|_{L^2(Q)}\leq C
\]
for some constant $C$ depending on $\kappa_1$, $\kappa_2$, and $\omega$.

Define the constant matrix $\widehat{A}_\d=\{a_{ij,\d}^{\a\b}\}$ by
\begin{equation}\label{twenty}
\widehat{a}_{ij,\d}^{\a\b}=\dashint_Q \lrnoe{}a_{ik}^{\a\g}\dfrac{\dd\X_{j,\d}^{\g\b}}{\dd x_k}\,dy,
\end{equation}
where $\X_{j,\d}^{\b}$ is defined in~\eqref{nineteen}.  The constant matrix $\widehat{A}_\d$ is uniformly elliptic uniformly in $\d$.  For details, see Section~\ref{section3}.  Let $u_{0,\d}$ denote a solution to the homogenized boundary value problem corresponding to~\eqref{three} with $\d\geq 0$, i.e., $u_{0,\d}$ satisfies
\begin{equation}\label{874}
\begin{cases}
\mathcal{L}_{0,\d}(u_{0,\d})=0\,\,\,\text{ in }\Omega \\
u_{0,\d}=f\,\,\,\text{ on }\dd\Omega,
\end{cases}
\end{equation}
where $\mathcal{L}_{0,\d}=-\text{div}(\widehat{A}_\d\nabla )$ and $\widehat{A}_\d$ is defined by~\eqref{twenty}.

Throughout, it is assumed that any two connected components of $\R^d\backslash\omega$ are separated by some positive distance.  Specifically, if $\R^d\backslash\omega=\cup_{k=1}^\infty H_k$, where $H_k$ is simply connected and bounded for each $k$, then there exists a constant $\mathfrak{g}^\omega$ so that 
\begin{equation}\label{nine}
0<\mathfrak{g}^\omega\leq \underset{k_1\neq k_2}{\inf}\left\{\underset{\substack{x_{k_1}\in H_{k_1} \\ x_{k_2}\in H_{k_2}}}{\inf}|x_{k_1}-x_{k_2}|\right\}.
\end{equation}
It should be noted that $\|\nabla u_{1,0}\|_{L^\infty}$ grows uncontrollably as $\mathfrak{g}^\omega\to 0$.  For more details regarding this and explicit results, see~\cite{baoliyin}.

%
%
%
%
%
\section{Homogenization with Soft Inclusions}\label{section3}
%
%
%
%
%

In this section, we quantitatively discuss the convergence of solutions to~\eqref{three} as $\e,\d\to 0$ by proving Theorem~\ref{five}.  In Subsection~\ref{subsection1}, we discuss the ellipticity of $\widehat{A}_\d$ which is shown to be uniform in $\d$.  In Subsection~\ref{subsection2}, we provide the proof of Theorem~\ref{five}.

\subsection{Ellipticity of $\widehat{A}_\d$}\label{subsection1}

If $A$ satisfies~\eqref{one} and~\eqref{two}, then $\widehat{A}_\d$ defined by~\eqref{seventeen} satisfies conditions~\eqref{one} and~\eqref{two} but with possibly different constants $\widetilde{\kappa}_1$ and $\widetilde{\kappa}_2$ depending on $\kappa_1$ and $\kappa_2$ but not $\d$.  In particular, we have the following lemma. 

\begin{lemm}\label{twentyone}
Let $\widehat{A}_\d$ be defined by~\eqref{seventeen} for $0\leq\d\leq 1$.  Then
\begin{align*}
& \widehat{a}_{ij,\d}^{\a\b}(y)=\widehat{a}_{ji,\d}^{\b\a}(y)=\widehat{a}_{\a j,\d}^{i\b}(y) \\
& \widetilde{\kappa}_1|\x|^2\leq a_{ij,\d}^{\a\b}(y)\x_i^\a\x_j^\b\leq \widetilde{\kappa}_2|\x|^2
\end{align*}
for any symmetric matrix $\x=\{\x_i^\a\}$, where $\widetilde{\kappa}_1,\widetilde{\kappa}_2>0$ depend on $\kappa_1$, $\kappa_2$, and $|Q\cap\omega|$.
\end{lemm}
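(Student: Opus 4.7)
The plan is to separate the proof of the symmetries from the two-sided ellipticity bound, and to save the $\d$-uniform lower bound for last as the main obstacle.

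For the symmetries I would test the corrector equation~\eqref{nineteen} for $\X_{j,\d}^\b$ against the periodic function $\phi = \chi_{i,\d}^\a$ and use $\chi_{i,\d}^\a = \X_{i,\d}^\a - y_i e^\a$; the linear part $y_i e^\a$ reconstructs the definition~\eqref{twenty} of $\widehat{a}_{ij,\d}^{\a\b}$, producing the symmetric representation
\[
\widehat{a}_{ij,\d}^{\a\b} = \dashint_Q \lrnoe{}\,a_{kl}^{\g\nu}\,\dfrac{\dd \X_{j,\d}^{\nu\b}}{\dd y_l}\,\dfrac{\dd \X_{i,\d}^{\g\a}}{\dd y_k}\,dy.
\]
The swap $i\leftrightarrow\a$ then follows directly from~\eqref{twenty} together with $a_{ik}^{\a\g} = a_{\a k}^{i\g}$, while the pair swap $(i,\a)\leftrightarrow(j,\b)$ follows from this symmetric representation and $a_{kl}^{\g\nu} = a_{lk}^{\nu\g}$.

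The same identity yields the variational characterization
\[
\widehat{a}_{ij,\d}^{\a\b}\x_i^\a\x_j^\b = \underset{\phi\in H^1_{\text{per}}(Q;\R^d)}{\inf}\dashint_Q \lrnoe{}\,a_{ij}^{\a\b}(y)(\x_j^\b + \dd_j\phi^\b)(\x_i^\a + \dd_i\phi^\a)\,dy
\]
for symmetric $\x$, minimized at $\phi^\a = \x_j^\b\chi_{j,\d}^{\a\b}$. Testing with $\phi = 0$ and using $\lrnoe{}\leq 1$ together with~\eqref{two} then gives the upper bound $\widehat{a}_{ij,\d}^{\a\b}\x_i^\a\x_j^\b \leq \kappa_2|\x|^2$, which settles $\widetilde{\kappa}_2$.

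The main obstacle is the $\d$-uniform lower bound, since $\lrnoe{}a$ degenerates on $\R^d\setminus\omega$ as $\d\downarrow 0$. My plan is to reduce it to the already-known ellipticity of $\widehat{A}_0$ (Lemma~\ref{twentysix}) via the pointwise inequality $\lrnoe{}\geq\one_+$. The symmetries~\eqref{one} force $a_{ij}^{\a\b}\,\dd_j u^\b\,\dd_i v^\a$ to depend only on the symmetric gradients of $u$ and $v$, so the coercivity in~\eqref{two} produces
\[
a_{ij}^{\a\b}(y)(\x_j^\b + \dd_j\phi^\b)(\x_i^\a + \dd_i\phi^\a) \geq \kappa_1\,|\x + e(\phi)|^2,
\]
where $e(\phi)$ denotes the symmetric gradient. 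Dropping $\lrnoe{}$ to $\one_+$ in the variational formula and then using~\eqref{two} in the reverse direction inside the variational formula for $\widehat{a}_0$ yields
\[
\widehat{a}_{ij,\d}^{\a\b}\x_i^\a\x_j^\b \geq \kappa_1\underset{\phi}{\inf}\dashint_Q \one_+|\x + e(\phi)|^2\,dy \geq \dfrac{\kappa_1}{\kappa_2}\,\widehat{a}_{ij,0}^{\a\b}\x_i^\a\x_j^\b \geq \dfrac{\kappa_1\widehat{\kappa}_1}{\kappa_2}|\x|^2,
\]
the last inequality being Lemma~\ref{twentysix}. Setting $\widetilde{\kappa}_1 = \kappa_1\widehat{\kappa}_1/\kappa_2$ then completes the proof; the dependence on $|Q\cap\omega|$ enters exclusively through the Korn-type constant on the perforated cell that already underlies $\widehat{\kappa}_1$.
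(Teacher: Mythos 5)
Your proof is correct, but it takes a genuinely different route from the paper. The paper deduces Lemma~\ref{twentyone} from Lemma~\ref{twentytwo} and the subsequent (unnamed) lemma: it shows the correctors $\X_\d$ converge to $\X_0$ in the substrate with a $\d^{1/2}$ rate and concludes $|\widehat{A}_\d-\widehat{A}_0|\lesssim \d^{1/2}$, so ellipticity of $\widehat{A}_\d$ for small $\d$ is inherited from that of $\widehat{A}_0$ (Lemma~\ref{twentysix}) by perturbation, while for $\d$ bounded away from $0$ the matter is standard. You instead invoke the variational (Dirichlet-principle) characterization of the effective tensor and the pointwise monotonicity $\lrnoe{}\geq\one_+$, giving
\[
\widehat{a}_{ij,\d}^{\a\b}\x_i^\a\x_j^\b
=\underset{\phi\in H^1_{\text{per}}}{\inf}\dashint_Q \lrnoe{}\,a\,(\x+\nabla\phi):(\x+\nabla\phi)
\geq \underset{\phi\in H^1_{\text{per}}}{\inf}\dashint_Q \one_+\,a\,(\x+\nabla\phi):(\x+\nabla\phi)
=\widehat{a}_{ij,0}^{\a\b}\x_i^\a\x_j^\b,
\]
and then appealing directly to Lemma~\ref{twentysix}. (Your displayed chain inserts an extra detour through $\kappa_1\inf\dashint\one_+|\x+e(\phi)|^2$ and back, producing the lossy constant $\kappa_1\widehat\kappa_1/\kappa_2$; the inequality written above skips that step and gives $\widetilde\kappa_1=\widehat\kappa_1$, but either way your argument is sound.) What you gain is a one-line, $\d$-uniform lower bound that works simultaneously for all $\d\in[0,1]$ with no case split and no corrector-convergence estimate. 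What the paper's route buys is the quantitative statement $|\widehat A_\d-\widehat A_0|\lesssim\d^{1/2}$ itself, which is useful information about the $\d\to 0$ limit beyond mere ellipticity; since the paper already proves Lemma~\ref{twentytwo} for other reasons, reusing it to get Lemma~\ref{twentyone} is essentially free. Your symmetry argument (testing the cell problem with $\chi_{i,\d}^\a$ to obtain the bilinear representation, then applying~\eqref{one}) is the standard one and is fine; the upper bound by testing with $\phi=0$ and $\lrnoe{}\leq 1$ is also correct.
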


Lemma~\ref{twentyone} follows from the following two lemmas.  The first discusses the convergence of $\chi_{j,\d}^{\b}$ in the connected substrate for each $1\leq j,\b\leq d$ as $\d\to 0$, and the second discusses the convergence of $\widehat{A}_\d$ to $\widehat{A}_0$ as $\d\to 0$.  As $\widehat{A}_0$ is known to be uniformly elliptic (see Lemma~\ref{twentysix}), we obtain Lemma~\ref{twentyone}.

\begin{lemm}\label{twentytwo}
If $\X_0=\{\X_{j,0}^\b\}_{1\leq j,\b\leq d}$, $\X_{\d}=\{\X_{j,{\d}}^\b\}_{1\leq j,\b\leq d}$ are defined by~\eqref{eighteen} and~\eqref{nineteen}, respectively, then for $\d>0$ we have the following estimates:
\begin{itemize}
\item[(i)] $\|\one_+\nabla(\X_0-\X_\d)\|_{L^2(Q)}\leq C_1\d^{1/2}$, 
\item[(ii)] $\|\one_-\nabla\X_\d\|_{L^2(Q)}\leq C_2\d^{-1/4}$,
\end{itemize}
where $C_1$, $C_2$ depend on $\kappa_1$ and $\kappa_2$.
\end{lemm}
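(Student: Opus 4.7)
The plan is to treat $\X_\d$ as a perturbation of $\X_0$ in the limit $\d\to 0$ and derive both estimates from a single energy identity. To compare the correctors on all of $Q$, first extend $\chi_{j,0}^\b$ periodically from $\omega\cap Q$ to $Q$ via (a periodic version of) Theorem~\ref{twentyfive}, obtaining $\widetilde{\chi}_{j,0}^\b\in H^1_{\text{per}}(Q;\R^d)$ with $\|\nabla\widetilde{\chi}_{j,0}^\b\|_{L^2(Q)}\leq C$. Set $\widetilde{\X}_{j,0}^\b=\widetilde{\chi}_{j,0}^\b+y_je^\b$ and $w=\X_{j,\d}^\b-\widetilde{\X}_{j,0}^\b=\chi_{j,\d}^\b-\widetilde{\chi}_{j,0}^\b$; after subtracting a harmless constant, $w$ is periodic and mean-zero on $Q$.

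Testing~\eqref{nineteen} with $\phi=w$ and decomposing $\lrnoe{}=\one_++\d\one_-$ gives
\[
\dint_Q\lrnoe{}\,a\,\nabla w\,\nabla w=-\dint_Q\one_+\,a\,\nabla\X_{j,0}^\b\,\nabla w-\d\dint_Q\one_-\,a\,\nabla\widetilde{\X}_{j,0}^\b\,\nabla w,
\]
where the first term on the right vanishes by~\eqref{eighteen}, since $w$ is a periodic admissible test function there and $\widetilde{\X}_{j,0}^\b=\X_{j,0}^\b$ on $\omega$. Applying ellipticity~\eqref{two} and the symmetries~\eqref{one} (which yield coercivity on symmetric gradients) on the left, and Cauchy--Schwarz together with $\|\one_-\nabla\widetilde{\X}_{j,0}^\b\|_{L^2(Q)}\leq C$ on the right, produces
\[
\|\one_+\,\mathrm{sym}\,\nabla w\|_{L^2(Q)}^2+\d\,\|\one_-\,\mathrm{sym}\,\nabla w\|_{L^2(Q)}^2\leq C\d\,\|\one_-\,\mathrm{sym}\,\nabla w\|_{L^2(Q)},
\]
from which $\|\one_-\,\mathrm{sym}\,\nabla w\|_{L^2(Q)}\leq C$ and $\|\one_+\,\mathrm{sym}\,\nabla w\|_{L^2(Q)}\leq C\d^{1/2}$.

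To upgrade from symmetric gradients to full gradients, apply Korn's inequality on the connected perforated cell $\omega\cap Q$ (the mean-zero, periodic character of $w$ absorbs the rigid motions) for the $\one_+$ piece, and on each bounded inclusion $H_k$, combined with a Poincar\'e--Wirtinger bound on $\|w\|_{L^2(Q)}$, for the $\one_-$ piece. This yields $\|\one_+\nabla w\|_{L^2(Q)}\leq C\d^{1/2}$, which is exactly (i) since $\widetilde{\X}_{j,0}^\b=\X_{j,0}^\b$ on $\omega$. For (ii), the companion bound $\|\one_-\nabla w\|_{L^2(Q)}\leq C$ combined with $\|\one_-\nabla\widetilde{\X}_{j,0}^\b\|_{L^2(Q)}\leq C$ and the triangle inequality gives $\|\one_-\nabla\X_\d\|_{L^2(Q)}\leq C$, which implies the claimed $C\d^{-1/4}$ bound since $\d\leq 1$.

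The main obstacle is the coercivity--Korn interplay: ellipticity controls only the symmetric gradient, so the upgrade to full gradient control requires Korn's inequality on the perforated substrate $\omega\cap Q$, and its constant must be tracked to be independent of $\d$ (it depends only on the geometry of $\omega$). A minor bookkeeping point is the normalization mismatch between~\eqref{eighteen} ($\int_Q k_0\chi_{j,0}^\b=0$) and~\eqref{nineteen} ($\int_Q\chi_{j,\d}^\b=0$), which creates a constant offset between $\widetilde{\chi}_{j,0}^\b$ and $\chi_{j,\d}^\b$; this is absorbed by shifting the extension by an appropriate constant, which does not affect any gradient.
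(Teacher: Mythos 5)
Your proof of part (i) is essentially the paper's: extend $\chi_{j,0}^\b$ to $Q$, form the periodic difference $w=\chi_{j,\d}^\b-\widetilde{\chi}_{j,0}^\b$, and test~\eqref{nineteen} against $w$, using~\eqref{eighteen} to kill the $\one_+$ cross term. You are actually more careful than the paper in noting that~\eqref{two} only controls the symmetric gradient, so Korn's inequality on the periodic perforated substrate $\omega\cap Q$ is needed to pass from $\|\one_+\,\mathrm{sym}\,\nabla w\|$ to $\|\one_+\nabla w\|$; the paper writes this as one step ``by the ellipticity of $A$,'' which silently bundles Korn. Up to that refinement, (i) is the same argument, and the normalization remark at the end correctly disposes of the $\int_Q k_0\chi_0=0$ versus $\int_Q\chi_\d=0$ mismatch.

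For part (ii), however, your argument diverges from the paper and has a genuine gap. You want $\|\one_-\nabla w\|_{L^2(Q)}\leq C$, and to upgrade from $\|\one_-\,\mathrm{sym}\,\nabla w\|\leq C$ you propose Korn on each inclusion $H_k$ ``combined with a Poincar\'e--Wirtinger bound on $\|w\|_{L^2(Q)}$.'' That Poincar\'e--Wirtinger bound reads $\|w\|_{L^2(Q)}\leq C\|\nabla w\|_{L^2(Q)}$, but the right side contains precisely the $\one_-\nabla w$ piece you are trying to estimate, so the step is circular: Korn on $H_k$ (where $w$ has no boundary condition and no mean-zero normalization) leaves an uncontrolled infinitesimal rotation on each inclusion, and nothing you have established bounds it. A correct (and non-circular) route would be to first get $\|\one_+ w\|_{L^2(Q)}\leq C\d^{1/2}$ from (i) via Poincar\'e on the substrate alone, pass to a trace bound on $\dd H_k$, and then use a Korn inequality on $H_k$ with boundary data; but that is not what is written. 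The paper sidesteps all of this by proving (ii) from a \emph{second} energy identity, $\d\int_Q\one_-A\nabla\X_\d\cdot\nabla\X_\d=-\int_Q\one_+A\nabla(\X_0-\X_\d)\cdot\nabla\X_\d$, which together with (i) yields the (weaker, but sufficient) $\d^{-1/4}$ rate without ever needing a Korn inequality on the inclusions. As written, your (ii) is not a proof; either fill in the trace/Korn argument sketched above or adopt the paper's second energy identity.
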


\begin{proof}
Let $\widetilde{\chi}_{j,0}^\b=P\chi_{j,0}^\b\in H_{}^1(Q;\R^d)$ be a periodic extension of $\chi_{j,0}^\b$ for each $1\leq j,\b\leq d$, where $P$ is the bounded linear extension operator given in~\cite[Lemma 4.1]{book2}.  Let
\[
\widetilde{\X}_{j,0}^\b(y)=y_je^\b+\widetilde{\chi}_{j,0}^\b(y).
\]
Recall that $\one_+\widetilde{\X}_0$ satisfies~\eqref{eighteen} and $\X_\d$ satisfies~\eqref{nineteen}, and so for any $\phi\in H^1_{\text{per}}(Q;\R^d)$ we have
\[
\dint_Q\lrnoe{}A\nabla (\widetilde{\X}_0-\X_\d)\cdot\nabla\phi=\d\dint_Q\one_-A\nabla\widetilde{\X}_0\cdot\nabla\phi
\]
Note
\[
\widetilde{\X}_{0}-\X_\d=\widetilde{\chi}_0-\chi_\d\in H_{\text{per}}^1(Q;\R^d),
\]
and so by the ellipticity of $A$ and Cauchy-Schwarz,
\begin{align*}
&\dint_{Q}\lrnoe{}|\nabla(\widetilde{\X}_0-\X_{\d})|^2\leq C\dint_{Q}\lrnoe{}A\nabla(\widetilde{\X}_0-\X_{\d})\cdot\nabla (\widetilde{\X}_0-\X_{\d})\\
&\hspace{20mm}=C\d\dint_{Q}\one_-\nabla\widetilde{\X}_0\cdot\nabla(\widetilde{\X}_0-\X_{\d}) \\
&\hspace{20mm}=C_1\d\dint_{Q}\one_+|\nabla\X_0|^2+\d\dint_{Q}\one_-|\nabla(\widetilde{\X}_0-\X_{\d})|^2,
\end{align*}
where $C_1$ only depends on $\kappa_1$ and $\kappa_2$.  This gives (i).  For (ii), note
\begin{align*}
\d\dint_{Q}\one_-A\nabla\X_{\d}\cdot\nabla\X_{\d}&=-\dint_{Q}\one_+A\nabla(\X_0-\X_{\d})\cdot\nabla\X_{\d} \\
&\leq C\d^{1/2}\|\one_+\nabla\X_0\|_{L^2(Q)}\|\one_+\nabla\X_{\d}\|_{L^2(Q)},
\end{align*}
where $C$ only depends on $\kappa_2$.
By (i),
\[
\d\dint_{Q}\one_-|\nabla\X_{\d}|^2\leq C\delta^{1/2}\|\one_+\nabla\X_0\|^2_{L^2(Q)},
\]
where $C$ depends on $\kappa_1$, $\kappa_2$, which gives (ii).
\end{proof}

\begin{lemm}
If $\widehat{A}_0$ and $\widehat{A}_\d$ are defined by~\eqref{seventeen} and~\eqref{twenty}, then
\[
\left| |Q\cap\omega|\widehat{A}_0-\widehat{A}_\d\right|\leq C\d^{1/2}\|\one_+\nabla\X_0\|_{L^2(Q)},
\]
where $C$ depends on $\kappa_1$ and $\kappa_2$.
\end{lemm}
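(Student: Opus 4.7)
The plan is to expand the difference $\widehat{A}_\d - |Q\cap\omega|\widehat{A}_0$ directly from the definitions~\eqref{seventeen} and~\eqref{twenty}, split the integral using $\lrnoe{}=\one_++\d\one_-$, and then estimate each resulting piece with the two bounds of Lemma~\ref{twentytwo}.

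Concretely, interpreting $\X_{j,0}^{\g\b}$ on $Q\cap\omega$ via its periodic extension $\widetilde{\X}_{j,0}^{\g\b}$ (the same extension used in the proof of Lemma~\ref{twentytwo}), I would write
\[
\widehat{a}_{ij,\d}^{\a\b}-|Q\cap\omega|\,\widehat{a}_{ij,0}^{\a\b}
=\int_Q\one_+\,a_{ik}^{\a\g}\dfrac{\dd(\X_{j,\d}^{\g\b}-\X_{j,0}^{\g\b})}{\dd y_k}
+\d\int_Q\one_-\,a_{ik}^{\a\g}\dfrac{\dd\X_{j,\d}^{\g\b}}{\dd y_k}.
\]
The prefactor $|Q\cap\omega|$ on the left reconciles the normalization in~\eqref{seventeen}, which effectively averages over $Q\cap\omega$ only, with that in~\eqref{twenty}, which averages over all of $Q$. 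Applying Cauchy--Schwarz and the bound $|A|\leq\kappa_2$ to the first term yields
\[
\left|\int_Q\one_+\,a_{ik}^{\a\g}\dfrac{\dd(\X_{j,\d}^{\g\b}-\X_{j,0}^{\g\b})}{\dd y_k}\right|
\leq \kappa_2\,|Q\cap\omega|^{1/2}\,\|\one_+\nabla(\X_\d-\X_0)\|_{L^2(Q)},
\]
and the sharper form of Lemma~\ref{twentytwo}(i) that is visible in its proof---namely $\|\one_+\nabla(\X_0-\X_\d)\|_{L^2(Q)}\leq C\d^{1/2}\|\one_+\nabla\X_0\|_{L^2(Q)}$---bounds this by $C\d^{1/2}\|\one_+\nabla\X_0\|_{L^2(Q)}$. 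Analogously, Cauchy--Schwarz on the second term together with Lemma~\ref{twentytwo}(ii) in the form $\|\one_-\nabla\X_\d\|_{L^2(Q)}\leq C\d^{-1/4}\|\one_+\nabla\X_0\|_{L^2(Q)}$ gives
\[
\left|\d\int_Q\one_-\,a_{ik}^{\a\g}\dfrac{\dd\X_{j,\d}^{\g\b}}{\dd y_k}\right|
\leq \d\kappa_2\,|Q\setminus\omega|^{1/2}\,\|\one_-\nabla\X_\d\|_{L^2(Q)}
\leq C\d^{3/4}\|\one_+\nabla\X_0\|_{L^2(Q)}.
\]

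Summing the two estimates and using $\d\leq 1$ so that $\d^{3/4}\leq \d^{1/2}$ then closes the argument. The main---albeit minor---obstacle is purely one of bookkeeping: the stated bounds of Lemma~\ref{twentytwo} absorb $\|\one_+\nabla\X_0\|_{L^2(Q)}$ into the constants $C_1,C_2$, so to land on exactly the right-hand side of the claimed inequality one must read the explicit $\X_0$-dependence off the proof of Lemma~\ref{twentytwo} rather than its statement, and then handle the cross term $\|\one_+\nabla\X_\d\|_{L^2(Q)}$ arising in Lemma~\ref{twentytwo}(ii) by the triangle inequality $\|\one_+\nabla\X_\d\|_{L^2(Q)}\leq(1+C\d^{1/2})\|\one_+\nabla\X_0\|_{L^2(Q)}$.
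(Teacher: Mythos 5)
Your proposal is correct and follows essentially the same route as the paper: it begins from the same algebraic identity writing $|Q\cap\omega|\widehat{A}_0-\widehat{A}_\d$ as $\int_Q\one_+A\nabla(\X_0-\X_\d)-\d\int_Q\one_-A\nabla\X_\d$ and then invokes Lemma~\ref{twentytwo} together with Cauchy--Schwarz. You also correctly flag a detail the paper leaves implicit---that the factor $\|\one_+\nabla\X_0\|_{L^2(Q)}$ must be read off the proof of Lemma~\ref{twentytwo} rather than its statement, where it has been absorbed into the constants.
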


\begin{proof}
Note
\[
|Q\cap\omega|\widehat{A}-\widehat{A}_\d=\dint_{Q}\one_+A\nabla(\X_0-\X_\d)-\d\dint_{Q}\one_-\nabla\X_\d,
\]
from which the desired estimate follows by Lemma~\ref{twentytwo}.
\end{proof}

\subsection{Convergence Rates}\label{subsection2}

Let $K_\e$ be defined as in Section~\ref{section2}.  Let $\eta_\e\in C_0^\infty(\Omega)$ satisfy
\begin{equation}\label{873}
\begin{cases}
0\leq \eta_\e(x)\leq 1\,\,\,\text{ for }x\in\Omega, \\
\text{supp}(\eta_\e)\subset \{x\in\Omega\,:\,\text{dist}(x,\dd\Omega)\geq 3\e\}, \\
\eta_\e=1\,\,\,\text{ on }\{x\in\Omega\,:\,\text{dist}(x,\dd\Omega)\geq 4\e\}, \\
|\nabla\eta_\e|\leq C\e^{-1}.
\end{cases}
\end{equation}
Let $\Gamma_\e=\dd\Omega\cap\e\omega$, and let $H^1(\Omega,\Gamma_\e;\R^d)$ denote the closure in $H^1(\Omega;\R^d)$ of $C^{\infty}(\R^d;\R^d)$ functions vanishing on $\Gamma_\e$.

\begin{lemm}\label{875}
Let $r_{\e,\d}=u_{\e,\d}-u_{0,\d}-\e\chi_{\d}^\e\smoothtwo{(\nabla u_{0,\d})\eta_\e}{\e}$.  Then
\begin{align*}
&\dint_{\Omega} \lr{} A^\e\nabla r_{\e,\d}\cdot\nabla w \\
&\hspace{10mm}=\dint_{\Omega}(\eta_\e-1)\lr{}A^\e\nabla \left[u_{\e,\d}-u_{0,\d}\right]\cdot\nabla w+\dint_{\Omega}\lr{}A^\e\nabla \left[u_{\e,\d}-u_{0,\d}\right]\cdot\left[w\nabla \eta_\e\right] \\
&\hspace{20mm}+\dint_{\Omega}\left[\widehat{A}_\d-k_{\d}^\e A^\e\right]\left[\nabla u_{0,\d}-\smoothtwo{(\nabla u_{0,\d})\eta_\e}{\e}\right]\cdot\nabla w \\
&\hspace{20mm}-\dint_{\Omega}\left[\widehat{A}_\d-k_{\d}^\e A^\e\nabla \X_\d\right]\smoothtwo{(\nabla u_{0,\d})\eta_\e}{\e}\cdot\nabla w \\
&\hspace{20mm}-\e\dint_{\Omega}\lr{2}A^\e\chi_\d^\e\nabla\smoothtwo{(\nabla u_{0,\d})\eta_\e}{\e}\cdot\nabla w\\
\end{align*}
for any $w\in H^1(\Omega,\Gamma_\e;\R^d)$.
\end{lemm}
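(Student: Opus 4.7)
The identity is a purely algebraic manipulation of $\int_\Omega k_\d^\e A^\e \nabla r_{\e,\d}\cdot\nabla w$, combining three ingredients: the product rule applied to the definition of $r_{\e,\d}$, the corrector decomposition $\nabla\chi_\d=\nabla\X_\d-I$ (valid since $\chi_{j,\d}^\b=\X_{j,\d}^\b-y_je^\b$ in~\eqref{nineteen}), and the weak formulations~\eqref{fortyfour} and~\eqref{874} of $u_{\e,\d}$ and $u_{0,\d}$ tested against $\eta_\e w$. The test function $\eta_\e w$ lies in $H_0^1(\Omega;\R^d)$ because $\eta_\e$ is supported strictly inside $\Omega$ by~\eqref{873}, so the requirement $w\in H^1(\Omega,\G_\e;\R^d)$ plays no further role beyond making $w$ well-defined on $\Omega$.

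First I would differentiate $r_{\e,\d}$ using the rescaling identity $\nabla_x[\e\chi_\d(x/\e)]=(\nabla\chi_\d)^\e$ to obtain
\[
\nabla r_{\e,\d}=\nabla(u_{\e,\d}-u_{0,\d})-(\nabla\chi_\d)^\e\smoothtwo{(\nabla u_{0,\d})\eta_\e}{\e}-\e\chi_\d^\e\nabla\smoothtwo{(\nabla u_{0,\d})\eta_\e}{\e}.
\]
After multiplying by $k_\d^\e A^\e$ and pairing with $\nabla w$, the third summand becomes the last integral on the right-hand side. In the middle summand I would substitute $\nabla\chi_\d=\nabla\X_\d-I$ to split it into the two pieces $-k_\d^\e A^\e(\nabla\X_\d)^\e$ and $+k_\d^\e A^\e$, then add and subtract $\widehat A_\d\smoothtwo{(\nabla u_{0,\d})\eta_\e}{\e}$ in order to isolate the $[\widehat A_\d-k_\d^\e A^\e(\nabla\X_\d)^\e]$-combination (which is the fourth integral on the right-hand side) and leave behind a residual of the form $\int_\Omega[k_\d^\e A^\e-\widehat A_\d]\smoothtwo{(\nabla u_{0,\d})\eta_\e}{\e}\cdot\nabla w$.

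For the first summand $\int_\Omega k_\d^\e A^\e\nabla(u_{\e,\d}-u_{0,\d})\cdot\nabla w$, I would add and subtract $\widehat A_\d\nabla u_{0,\d}$ to produce
\[
k_\d^\e A^\e\nabla(u_{\e,\d}-u_{0,\d})=[k_\d^\e A^\e\nabla u_{\e,\d}-\widehat A_\d\nabla u_{0,\d}]+[\widehat A_\d-k_\d^\e A^\e]\nabla u_{0,\d}.
\]
Splitting $\nabla u_{0,\d}=[\nabla u_{0,\d}-\smoothtwo{(\nabla u_{0,\d})\eta_\e}{\e}]+\smoothtwo{(\nabla u_{0,\d})\eta_\e}{\e}$ in the second bracket causes the smoothed contribution to cancel exactly against the residual from the previous step, leaving the third integral on the right-hand side. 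For the remaining $\int_\Omega[k_\d^\e A^\e\nabla u_{\e,\d}-\widehat A_\d\nabla u_{0,\d}]\cdot\nabla w$ piece, I would decompose $\nabla w=\nabla(\eta_\e w)-w\nabla\eta_\e-(\eta_\e-1)\nabla w$; the $\nabla(\eta_\e w)$-contribution vanishes by~\eqref{fortyfour} and~\eqref{874} since $\eta_\e w\in H_0^1(\Omega;\R^d)$, and the $w\nabla\eta_\e$ and $(\eta_\e-1)\nabla w$ contributions reassemble into the first two integrals on the right-hand side after the fluxes are regrouped as $k_\d^\e A^\e\nabla u_{\e,\d}-\widehat A_\d\nabla u_{0,\d}=k_\d^\e A^\e\nabla(u_{\e,\d}-u_{0,\d})+[k_\d^\e A^\e-\widehat A_\d]\nabla u_{0,\d}$ and the spurious $[k_\d^\e A^\e-\widehat A_\d]\nabla u_{0,\d}$-pieces are folded back into the third integral.

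The argument is conceptually elementary; the principal obstacle is careful bookkeeping of signs and of the several places where $\widehat A_\d$ is added and subtracted and where $\nabla u_{0,\d}$ is traded for $\smoothtwo{(\nabla u_{0,\d})\eta_\e}{\e}$. Because the manipulation uses only algebraic identities together with the defining equations for $u_{\e,\d}$, $u_{0,\d}$, and $\chi_\d$, no estimates are invoked and the identity holds uniformly for every $\d\in[0,1]$ with no constants that degenerate as $\d\to 0$.
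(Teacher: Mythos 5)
Your overall approach is the paper's approach: expand $\nabla r_{\e,\d}$ by the product rule, split the corrector via $\nabla\chi_\d=\nabla\X_\d-I$, use the weak formulations of $u_{\e,\d}$ and $u_{0,\d}$ tested against $\eta_\e w\in H_0^1(\Omega;\R^d)$, and insert and subtract $\widehat A_\d$ to expose the good error terms. The order in which you peel off pieces differs cosmetically from the paper's (you isolate the $[\widehat A_\d-k_\d^\e A^\e\nabla\X_\d]$-combination first and then use the residual to cancel part of the $[\widehat A_\d-k_\d^\e A^\e]\nabla u_{0,\d}$ split), but this is the same algebraic manipulation reorganized.

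There is, however, one step that does not hold as stated: the claim that the ``spurious $[k_\d^\e A^\e-\widehat A_\d]\nabla u_{0,\d}$-pieces are folded back into the third integral.'' After you substitute $k_\d^\e A^\e\nabla u_{\e,\d}-\widehat A_\d\nabla u_{0,\d}=k_\d^\e A^\e\nabla(u_{\e,\d}-u_{0,\d})+[k_\d^\e A^\e-\widehat A_\d]\nabla u_{0,\d}$ into the two terms weighted by $(1-\eta_\e)$ and $w\nabla\eta_\e$, the residual is
\[
\dint_{\Omega}(1-\eta_\e)\bigl[k_\d^\e A^\e-\widehat A_\d\bigr]\nabla u_{0,\d}\cdot\nabla w-\dint_{\Omega}\bigl[k_\d^\e A^\e-\widehat A_\d\bigr]\nabla u_{0,\d}\cdot\bigl[w\nabla\eta_\e\bigr],
\]
which carries the cutoff weights and therefore does not reassemble into $\int_\Omega[\widehat A_\d-k_\d^\e A^\e][\nabla u_{0,\d}-\smoothtwo{(\nabla u_{0,\d})\eta_\e}{\e}]\cdot\nabla w$, whose integrand has no such weights. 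The cleaner conclusion of your own computation is the identity in which the first two integrals carry the flux difference $k_\d^\e A^\e\nabla u_{\e,\d}-\widehat A_\d\nabla u_{0,\d}$ rather than $k_\d^\e A^\e\nabla(u_{\e,\d}-u_{0,\d})$; this is what the algebra actually produces, and it serves equally well for the subsequent estimates (since $\widehat A_\d$ and $k_\d^\e A^\e$ are both bounded uniformly in $\d$). You should either stop there and note that this is the form one obtains, or, if you wish to recover the exact expression in the lemma, account explicitly for the extra $(1-\eta_\e)$- and $w\nabla\eta_\e$-weighted residuals instead of asserting they cancel.
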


\begin{proof}
Since $u_{\e,\d}$ and $u_{0,\d}$ solve~\eqref{three} and~\eqref{874}, respectively,
\[
\dint_{\Omega}\lr{} A^\e\nabla u_{\e,\d}\cdot\nabla [w\eta_\e]=\dint_{\Omega}\widehat{A}_\d\nabla u_{0,\d}\cdot\nabla [w\eta_\e]=0
\]
for any $w\in H^1(\Omega,\Gamma_\e;\R^d)$, where $\eta_\e$ denotes the cuttoff function defined by~\eqref{873}.  Hence,
\begin{align*}
&\dint_{\Omega}\lr{}A^\e\nabla r_{\e,\d}\cdot\nabla w \\
&\hspace{10mm}= \dint_{\Omega}\lr{}A^\e\nabla u_{\e,\d}\cdot\nabla w-\dint_{\Omega}\lr{}A^\e\nabla u_{0,\d}\cdot\nabla w \\
&\hspace{20mm}-\dint_{\Omega}\lr{}A^\e\nabla\left[\e\chi_\d^\e\smoothtwo{(\nabla u_{0,\d})\eta_\e}{\e}\right]\cdot\nabla w \\
&\hspace{10mm}=\dint_{\Omega}(\eta_\e-1)\lr{}A^\e\nabla u_{\e,\d}\cdot\nabla w+\dint_{\Omega}\lr{}A^\e\nabla u_{\e,\d}\cdot\left[w\nabla \eta_\e\right] \\
&\hspace{20mm} +\dint_{\Omega}\lr{}A^\e\nabla u_{0,\d}\cdot\nabla w -\dint_{\Omega}\lr{}A^\e\nabla \chi_\d^\e\smoothtwo{(\nabla u_{0,\d})\eta_\e}{\e}\cdot\nabla w\\
&\hspace{20mm}-\e\dint_{\Omega}\lr{}A^\e\chi_\d^\e\nabla\smoothtwo{(\nabla u_{0,\d})\eta_\e}{\e}\cdot\nabla w\\
&\hspace{10mm}=\dint_{\Omega}(\eta_\e-1)\lr{}A^\e\nabla \left[u_{\e,\d}-u_{0,\d}\right]\cdot\nabla w+\dint_{\Omega}\lr{}A^\e\nabla \left[u_{\e,\d}-u_{0,\d}\right]\cdot\left[w\nabla \eta_\e\right] \\
&\hspace{20mm} +\dint_{\Omega}\left[\widehat{A}_\d-\lr{}A^\e\right]\nabla u_{0,\d}\cdot\nabla w -\dint_{\Omega}\lr{}A^\e\nabla \chi_\d^\e\smoothtwo{(\nabla u_{0,\d})\eta_\e}{\e}\cdot\nabla w\\
&\hspace{20mm}-\e\dint_{\Omega}\lr{}A^\e\chi_\d^\e\nabla\smoothtwo{(\nabla u_{0,\d})\eta_\e}{\e}\cdot\nabla w\\
&\hspace{10mm}=\dint_{\Omega}(\eta_\e-1)\lr{}A^\e\nabla \left[u_{\e,\d}-u_{0,\d}\right]\cdot\nabla w+\dint_{\Omega}\lr{}A^\e\nabla \left[u_{\e,\d}-u_{0,\d}\right]\cdot\left[w\nabla \eta_\e\right] \\
&\hspace{20mm}+\dint_{\Omega}\left[\widehat{A}_\d-\lr{} A^\e\right]\left[\nabla u_{0,\d}-\smoothtwo{(\nabla u_{0,\d})\eta_\e}{\e}\right]\cdot\nabla w \\
&\hspace{20mm}-\dint_{\Omega}\left[\widehat{A}_\d-\lr{} A^\e-\lr{}A^\e\nabla \chi_\d^\e\right]\smoothtwo{(\nabla u_{0,\d})\eta_\e}{\e}\cdot\nabla w \\
&\hspace{20mm}-\e\dint_{\Omega}\lr{}A^\e\chi_\d^\e\nabla\smoothtwo{(\nabla u_{0,\d})\eta_\e}{\e}\cdot\nabla w,
\end{align*}
where we have used the equalities
\[
\nabla w=(1-\eta_\e)\nabla w-\nabla [w\eta_\e]+w\nabla \eta_\e
\]
and
\[
\dint_{\Omega}\widehat{A}_\d\nabla u_{0,\d}\cdot\nabla w=\dint_{\Omega}(1-\eta_\e)\widehat{A}_\d\nabla u_{0,\d}\cdot\nabla w+\dint_{\Omega}\widehat{A}_\d\nabla u_{0,\d}\cdot[w\nabla \eta_\e].
\]
This proves the lemma.
\end{proof}

\begin{lemm}\label{}
For $w\in H^1(\Omega,\Gamma_\e;\R^d)$,
\begin{align*}
\left|\dint_{\Omega}\lr{}A^\e\nabla r_{\e,\d}\cdot\nabla w\right| &\leq C\left\{\|\nabla u_{0,\d}\|_{L^2(\mathcal{O}_{4\e})}+\|(\nabla u_{0,\d})\eta_\e-\smoothone{(\nabla u_{0,\d})\eta_\e}{\e}\|_{L^2(\Omega)}\right. \\
&\hspace{10mm}\left.+\e\|\smoothone{(\nabla^2 u_{0,\d})\eta_\e}{\e}\|_{L^2(\Omega)}+\|\lr{}\nabla u_{\e,\d}\|_{L^2(\mathcal{O}_{4\e})}\right\}\|\nabla w\|_{L^2(\Omega)}
\end{align*}
\end{lemm}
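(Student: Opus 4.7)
The plan is to start from the five-term decomposition of $\int_\Omega k_\delta^\epsilon A^\epsilon\nabla r_{\epsilon,\delta}\cdot\nabla w$ furnished by Lemma~\ref{875} and to bound each piece separately so that the four quantities on the right-hand side appear. The simplest contributions are the first two, both of which involve factors $(\eta_\epsilon-1)$ or $\nabla\eta_\epsilon$ supported in $\mathcal{O}_{4\epsilon}$. Cauchy--Schwarz together with $|A|\leq\kappa_2$ and $k_\delta\leq 1$ handles the first directly, yielding $\|k_\delta^\epsilon\nabla u_{\epsilon,\delta}\|_{L^2(\mathcal{O}_{4\epsilon})}+\|\nabla u_{0,\delta}\|_{L^2(\mathcal{O}_{4\epsilon})}$. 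For the second, the bound $|\nabla\eta_\epsilon|\leq C\epsilon^{-1}$ costs a factor of $\epsilon^{-1}$, which I would absorb using a boundary Poincar\'e inequality for $w\in H^1(\Omega,\Gamma_\epsilon;\R^d)$ of the form $\|w\|_{L^2(\mathcal{O}_{4\epsilon})}\leq C\epsilon\|\nabla w\|_{L^2(\Omega)}$; such an estimate is available because $w$ vanishes on $\Gamma_\epsilon$ and the $\epsilon$-periodic substrate occupies a positive fraction of $\partial\Omega$ in every boundary patch.

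For Term~3 I would split
\[
\nabla u_{0,\delta}-K_\epsilon^2\bigl((\nabla u_{0,\delta})\eta_\epsilon\bigr)=(1-\eta_\epsilon)\nabla u_{0,\delta}+\bigl[(\nabla u_{0,\delta})\eta_\epsilon-K_\epsilon^2\bigl((\nabla u_{0,\delta})\eta_\epsilon\bigr)\bigr],
\]
so that the first summand is supported in $\mathcal{O}_{4\epsilon}$ and the second is treated via the identity $g-K_\epsilon^2 g=(I+K_\epsilon)(g-K_\epsilon g)$ and the $L^2\to L^2$ boundedness of $K_\epsilon$ (Lemma~\ref{twentyseven} with $h\equiv 1$), producing the required $\|(\nabla u_{0,\delta})\eta_\epsilon-K_\epsilon((\nabla u_{0,\delta})\eta_\epsilon)\|_{L^2(\Omega)}$ quantity. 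For Term~5 I would use that convolution commutes with derivatives, i.e.\ $\nabla K_\epsilon^2 g=K_\epsilon^2\nabla g$, and expand $\nabla((\nabla u_{0,\delta})\eta_\epsilon)=(\nabla^2 u_{0,\delta})\eta_\epsilon+(\nabla u_{0,\delta})\otimes\nabla\eta_\epsilon$. Applying Lemma~\ref{twentyseven} with $h=k_\delta\chi_\delta$, whose $L^2(Q)$-norm is uniformly bounded in $\delta$ by the estimate recorded in Section~\ref{section2}, converts the first piece to $\epsilon\|K_\epsilon((\nabla^2 u_{0,\delta})\eta_\epsilon)\|_{L^2}\|\nabla w\|_{L^2}$, and the second piece is $\lesssim\|\nabla u_{0,\delta}\|_{L^2(\mathcal{O}_{4\epsilon})}\|\nabla w\|_{L^2}$ because the prefactor $\epsilon$ cancels the $\epsilon^{-1}$ from $\nabla\eta_\epsilon$ while the support is $\mathcal{O}_{4\epsilon}$.

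The main obstacle is Term~4,
\[
-\int_\Omega\bigl[\widehat{A}_\delta-k_\delta^\epsilon A^\epsilon-k_\delta^\epsilon A^\epsilon\nabla\chi_\delta^\epsilon\bigr]\,K_\epsilon^2\bigl((\nabla u_{0,\delta})\eta_\epsilon\bigr)\cdot\nabla w,
\]
which a priori carries no factor of $\epsilon$. I would introduce $B_\delta:=\widehat{A}_\delta-k_\delta A-k_\delta A\nabla\chi_\delta$, regarded as a $1$-periodic matrix on $\R^d$, verify $\int_Q B_\delta=0$ directly from the definition~\eqref{twenty} of $\widehat{A}_\delta$, and check $\partial_{y_i}b_{ij,\delta}^{\alpha\beta}=0$ in the sense of distributions using the corrector equation~\eqref{nineteen} with suitable test functions. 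Lemma~\ref{fourteen} then supplies a $1$-periodic dual corrector $\pi_\delta=\{\pi_{kij,\delta}^{\alpha\beta}\}$, skew in $(k,i)$, with $\partial_{y_k}\pi_{kij,\delta}^{\alpha\beta}=b_{ij,\delta}^{\alpha\beta}$ and, after tracking constants, $\|\pi_\delta\|_{L^2(Q)}\leq C$ uniformly in $\delta$ from the uniform $L^2(Q)$-control of $B_\delta$. Rescaling gives $b_{ij,\delta}^{\alpha\beta,\epsilon}=\epsilon\,\partial_{x_k}\pi_{kij,\delta}^{\alpha\beta,\epsilon}$, so integrating by parts in $x_k$ decomposes Term~4 into a piece with $\partial_{x_k}\partial_{x_i}w^\alpha$---which vanishes upon pairing a symmetric Hessian against the $(k,i)$-skew $\pi_\delta$---and a piece of the form $\epsilon\int\pi_\delta^\epsilon\,\partial_{x_k}K_\epsilon^2\bigl((\nabla u_{0,\delta})\eta_\epsilon\bigr)\,\partial_{x_i}w^\alpha$, which is estimated exactly as Term~5 via Lemma~\ref{twentyseven}. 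The delicate point is that the integration by parts produces no boundary contribution; this holds because $K_\epsilon^2\bigl((\nabla u_{0,\delta})\eta_\epsilon\bigr)$ vanishes in an $\epsilon$-neighborhood of $\partial\Omega$ thanks to the support of $\eta_\epsilon$ and of the kernel $\varphi_\epsilon$.
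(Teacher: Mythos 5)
Your proposal follows essentially the same route as the paper: the same five-term decomposition from Lemma~\ref{875}, the same boundary-layer treatment of $I_1,I_2$ (with the boundary Poincar\'e inequality for $w\in H^1(\Omega,\Gamma_\e;\R^d)$ standing in for the paper's citation to~\cite{bcr17}), the same telescoping $(I+K_\e)(g-K_\e g)$ trick for $I_3$, the same Lemma~\ref{twentyseven} argument for $I_5$, and the same flux-corrector construction via Lemma~\ref{fourteen} together with skew-symmetry of $\pi_\d$ for $I_4$. All the delicate points you flag (absence of boundary terms in the integration by parts, $\delta$-uniformity of $\|\pi_\d\|_{H^1(Q)}$ via Lemma~\ref{twentyone}, $\delta$-uniform $L^2(Q)$ bound on $k_\d\chi_\d$) are exactly those the paper relies on.
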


\begin{proof}
By Lemma~\ref{875},
\begin{equation}\label{}
\dint_{\Omega}\lr{}A^\e\nabla r_{\e,\d}\cdot\nabla w=I_1+I_2+I_3+I_4+I_5,
\end{equation}
where
\begin{align*}
I_1 &= \dint_{\Omega}(\eta_\e-1)\lr{}A^\e\nabla \left[u_{\e,\d}-u_{0,\d}\right]\cdot\nabla w\\
I_2 &= \dint_{\Omega}\lr{}A^\e\nabla \left[u_{\e,\d}-u_{0,\d}\right]\cdot[w\nabla \eta_\e]\\
I_3 &= \dint_{\Omega}\left[\widehat{A}_\d-\lr{} A^\e\right]\left[\nabla u_{0,\d}-\smoothtwo{(\nabla u_{0,\d})\eta_\e}{\e}\right]\cdot\nabla w\\
I_4 &= -\dint_{\Omega}\left[\widehat{A}_\d-\lr{} A^\e\nabla \X^\e_\d\right]\smoothtwo{(\nabla u_{0,\d})\eta_\e}{\e}\cdot\nabla w \\
I_5 &= -\e\dint_{\Omega}\lr{}A^\e\chi_\d^\e\nabla\smoothtwo{(\nabla u_{0,\d})\eta_\e}{\e}\cdot\nabla w\\
\end{align*}
and $w\in H^1(\Omega,\Gamma_\e;\R^d)$.  
Since $\text{supp}(1-\eta_\e)\subset\mathcal{O}_{4\e}$, where
\[
\mathcal{O}_{4\e}=\{x\in\Omega\,:\,\text{dist}(x,\dd\Omega)<4\e\},
\]
by Cauchy-Schwarz,~\eqref{873}, and~\eqref{two} we have
\begin{equation}\label{876}
|I_1|\leq C\left\{\|\nabla u_0\|_{L^2(\mathcal{O}_{4\e})}+\|\lr{}\nabla u_{\e,\d}\|_{L^2(\mathcal{O}_{4\e})}\right\}\|\nabla w\|_{L^2(\Omega)}.
\end{equation}
Similarly, as $\text{supp}(\nabla\eta_\e)\subset\mathcal{O}_{4\e}$, Cauchy-Schwarz,~\cite[Lemma 3.4]{bcr17}, and~\eqref{873} imply
\begin{equation}\label{877}
|I_2|\leq C\left\{\|\nabla u_0\|_{L^2(\mathcal{O}_{4\e})}+\|\lr{}\nabla u_{\e,\d}\|_{L^2(\mathcal{O}_{4\e})}\right\}\|\nabla w\|_{L^2(\Omega)}.
\end{equation}
Using~\eqref{873} again,
\begin{align*}
&\|\nabla u_0-\smoothtwo{(\nabla u_0)\eta_\e}{\e}\|_{L^2(\Omega)} \\ 
&\hspace{10mm}\leq \|(1-\eta_\e)\nabla u_0\|_{L^2(\Omega)}+\|(\nabla u_0)\eta_\e-\smoothone{(\nabla u_0)\eta_\e}{\e}\|_{L^2(\Omega)} \\
&\hspace{20mm}+\|\smoothone{(\nabla u_0)\eta_\e-\smoothone{(\nabla u_0)\eta_\e}{\e}}{\e}\|_{L^2(\Omega)} \\
&\hspace{10mm}\leq\|\nabla u_0\|_{L^2(\mathcal{O}_{4\e})}+C\|(\nabla u_0)\eta_\e-\smoothone{(\nabla u_0)\eta_\e}{\e}\|_{L^2(\Omega)}.
\end{align*}
Therefore, by Cauchy-Schwarz,
\begin{align}
|I_3|&\leq C\|\nabla u_0-\smoothtwo{(\nabla u_0)\eta_\e}{\e}\|_{L^2(\Omega)}\|\nabla w\|_{L^2(\Omega)} \nonumber\\
&\leq C\left\{\|\nabla u_0\|_{L^2(\mathcal{O}_{4\e})} \right. \nonumber\\
&\hspace{10mm}\left.+\|(\nabla u_0)\eta_\e-\smoothone{(\nabla u_0)\eta_\e}{\e}\|_{L^2(\Omega)} \right\}\|\nabla w\|_{L^2(\Omega)}.\label{878}
\end{align}

Set $B_\d=\widehat{A}_\d-\lrnoe{}A\nabla\X_\d$.  By~\eqref{nineteen} and~\eqref{twenty}, $B_\d=\{b_{ij,\d}^{\a\b}\}$ satisfies the assumptions of Lemma~\ref{fourteen}.  Therefore, there exists $\pi_\d=\{\pi_{kij,\d}^{\a\b}\}$ that is 1-periodic with
\[
\dfrac{\dd}{\dd y_k}\pi_{kij,\d}^{\a\b}=b_{ij,\d}^{\a\b}\,\,\,\text{ and }\,\,\,\pi_{kij,\d}^{\a\b}=-\pi_{ikj,\d}^{\a\b},
\]
where
\[
b_{ij,\d}^{\a\b}=\widehat{a}_{ij,\d}^{\a\b}-\lrnoe{2}a_{ik,\d}^{\a\g}\dfrac{\dd}{\dd y_k}\X_{j,\d}^{\g\b}.
\]
Moreover, $\|\pi_{ij,\d}^{\a\b}\|_{H^1(Q)}\leq C$ for some constant $C$ depending on $\kappa_1$, $\kappa_2$, but not $\d$ given Lemma~\ref{twentyone}.  Hence, integrating by parts gives
\begin{align*}
\dint_{\Omega}b_{ij,\d}^{\a\b\e}\smoothtwo{\dfrac{\dd u_{0,\d}^\b}{\dd x_j}\eta_\e}{\e}\dfrac{\dd\widetilde{w}^\a}{\dd x_i} &=-\e\dint_{\Omega}\pi_{kij,\d}^{\a\b\e}\dfrac{\dd}{\dd x_k}\left[\smoothtwo{\dfrac{\dd u_{0,\d}^\b}{\dd x_j}\eta_\e}{\e}\dfrac{\dd{w}^\a}{\dd x_i}\right] \\
&=-\e\dint_{\Omega}\pi_{kij,\d}^{\a\b\e}\dfrac{\dd}{\dd x_k}\left[\smoothtwo{\dfrac{\dd u_{0,\d}^\b}{\dd x_j}\eta_\e}{\e}\right]\dfrac{\dd{w}^\a}{\dd x_i} ,
\end{align*}
since
\[
\dint_{\Omega}\pi_{kij,\d}^{\a\b\e}\smoothtwo{\dfrac{\dd u_{0,\d}^\b}{\dd x_j}\eta_\e}{\e}\dfrac{\dd^2{w}^\a}{\dd x_k\dd x_i}=0
\]
due to the anit-symmetry of $\pi_\d$.  Thus, by Lemma~\ref{twentyseven}, and~\eqref{873},
\begin{align}
|I_4|&\leq C\e\|\pi_\d^\e\nabla\smoothtwo{(\nabla u_{0,\d})\eta_\e}{\e}\|_{L^2(\Omega)}\|\nabla w\|_{L^2(\Omega)} \nonumber\\
&\leq C\left\{\|\nabla u_{0,\d}\|_{L^2(\mathcal{O}_{4\e})}+\e\|\smoothone{(\nabla^2 u_{0,\d})\eta_\e}{\e}\|_{L^2(\Omega)}\right\}\|\nabla w\|_{L^2(\Omega)}.\label{879}
\end{align}

Finally, by Lemma~\ref{twentyseven} and~\eqref{873},
\begin{align}
|I_5|\leq C\left\{\|\nabla u_{0,\d}\|_{L^2(\mathcal{O}_{4\e})}+\e\|\smoothone{(\nabla^2 u_{0,\d})\eta_\e}{\e}\|_{L^2(\Omega)}\right\}\|\nabla w\|_{L^2(\Omega)}\label{880}
\end{align}
The desired estimate follows from~\eqref{876}--\eqref{880}.
\end{proof}

\begin{lemm}\label{892}
For $w\in H^1(\Omega,\Gamma_\e;\R^d)$,
\[
\left|\dint_{\Omega}\lr{}A^\e\nabla r_{\e,\d}\cdot\nabla w\right|\leq C\e^{\mu}\|f\|_{H^1(\dd\Omega)}\|\nabla w\|_{L^2(\Omega)},
\]
where $\mu>0$ depends on $d$, $\kappa_1$, and $\kappa_2$.
\end{lemm}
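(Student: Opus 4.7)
The plan is to bound each of the four terms on the right of the previous lemma by $C\e^\mu\|f\|_{H^1(\dd\Omega)}$ and then take $\mu$ to be the minimum of the exponents so produced.  The foundational input is uniform fractional regularity of the homogenized solution: because $\widehat{A}_\d$ is uniformly elliptic in $\d$ by Lemma~\ref{twentyone} and $\Omega$ is a bounded Lipschitz domain with $f\in H^1(\dd\Omega)$, the Dahlberg--Kenig--Verchota/Jerison--Kenig theory for constant-coefficient elliptic systems yields
\[
u_{0,\d}\in H^{3/2}(\Omega)\quad\text{with}\quad\|u_{0,\d}\|_{H^{3/2}(\Omega)}\leq C\|f\|_{H^1(\dd\Omega)},
\]
where $C$ is independent of $\d$.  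In particular $\nabla u_{0,\d}\in H^{1/2}(\Omega)$ with a $\d$-uniform bound.

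For $\|\nabla u_{0,\d}\|_{L^2(\mathcal{O}_{4\e})}$, interpolate Lemma~\ref{thirteen} with the trivial $L^2$ bound to obtain $\|g\|_{L^2(\mathcal{O}_r)}\leq Cr^{1/4}\|g\|_{H^{1/2}(\Omega)}$, and apply this to $g=\nabla u_{0,\d}$ with $r=4\e$.  For the smoothing error, split
\[
(\nabla u_{0,\d})\eta_\e-\smoothone{(\nabla u_{0,\d})\eta_\e}{\e}=\bigl[(\nabla u_{0,\d})\eta_\e-\nabla u_{0,\d}\bigr]+\bigl[\nabla u_{0,\d}-\smoothone{\nabla u_{0,\d}}{\e}\bigr]+\smoothone{\nabla u_{0,\d}-(\nabla u_{0,\d})\eta_\e}{\e},
\]
so the first and third contributions reduce (by Lemma~\ref{twentyseven}) to $\|\nabla u_{0,\d}\|_{L^2(\mathcal{O}_{4\e})}$ already controlled above, while the middle is estimated after extending $\nabla u_{0,\d}$ to $\R^d$ by interpolating Lemma~\ref{thirtysix} with its trivial $L^2$ companion to get $\|g-\smoothone{g}{\e}\|_{L^2(\R^d)}\leq C\e^{1/2}\|g\|_{H^{1/2}(\R^d)}$.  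For $\e\|\smoothone{(\nabla^2 u_{0,\d})\eta_\e}{\e}\|_{L^2(\Omega)}$, use $L^2$-boundedness of $K_\e$ together with the weighted Hessian estimate
\[
\dint_\Omega\text{dist}(x,\dd\Omega)\,|\nabla^2 u_{0,\d}|^2\,dx\leq C\|u_{0,\d}\|_{H^{3/2}(\Omega)}^2,
\]
which follows from interior $H^2$-regularity for the constant-coefficient system $\mathcal{L}_{0,\d}$ summed over a dyadic decomposition of $\Omega$; since $\text{supp}(\eta_\e)\subset\{\text{dist}(\cdot,\dd\Omega)\geq 3\e\}$, this gives $\|(\nabla^2 u_{0,\d})\eta_\e\|_{L^2(\Omega)}\leq C\e^{-1/2}\|u_{0,\d}\|_{H^{3/2}(\Omega)}$, and the prefactor $\e$ produces a net rate of $\e^{1/2}$.

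The remaining and main obstacle is $\|\lr{}\nabla u_{\e,\d}\|_{L^2(\mathcal{O}_{4\e})}$.  Here the plan is to establish a Meyers-type reverse H\"older improvement: there exists $p>2$, independent of $\e$ and $\d$, such that
\[
\|\lr{}\nabla u_{\e,\d}\|_{L^p(\Omega)}\leq C\|f\|_{H^1(\dd\Omega)}.
\]
Uniformity in $\d$ is delicate because $\lr{}A^\e$ degenerates in the inclusions as $\d\to 0$; the strategy is to run the standard Caccioppoli-plus-Sobolev self-improvement in the substrate $\e\omega\cap\Omega$ using the extension operator of Theorem~\ref{twentyfive} to pass the $L^p$-bound across the connected substrate, and then propagate it into the inclusions via the equation $\mathcal{L}_{1,1}(u_{\e,\d})=0$ there (where the weight $k_\d^\e$ is constant $\d$).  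Once the reverse H\"older estimate is in place, H\"older's inequality on the strip $\mathcal{O}_{4\e}$, whose measure is $\lesssim\e$, gives
\[
\|\lr{}\nabla u_{\e,\d}\|_{L^2(\mathcal{O}_{4\e})}\leq C|\mathcal{O}_{4\e}|^{1/2-1/p}\|\lr{}\nabla u_{\e,\d}\|_{L^p(\Omega)}\leq C\e^{1/2-1/p}\|f\|_{H^1(\dd\Omega)}.
\]
Combining the four bounds yields the lemma with $\mu=\min\{1/4,\,1/2-1/p\}>0$, depending only on $d$, $\kappa_1$, $\kappa_2$, as asserted.
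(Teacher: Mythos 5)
Your plan follows essentially the same strategy as the paper: bound each of the four terms from the previous lemma by $C\e^\mu\|f\|_{H^1(\dd\Omega)}$, with the first three controlled by regularity of the constant-coefficient solution $u_{0,\d}$ and the last by a Meyers-type reverse H\"older estimate. The packaging of the constant-coefficient regularity differs: the paper works with the nontangential maximal function estimate $\|(\nabla u_{0,\d})^*\|_{L^2(\dd\Omega)}\leq C\|f\|_{H^1(\dd\Omega)}$, the coarea formula, and a pointwise interior Lipschitz bound on $\nabla^2 u_{0,\d}$, obtaining $\e^{1/2}$ rates for each term; you use $H^{3/2}$ regularity plus interpolation (of Lemma~\ref{thirteen} and Lemma~\ref{thirtysix}), and for the strip estimate this gives the weaker but still sufficient rate $\e^{1/4}$. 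Your weighted Hessian bound $\int_\Omega\mathrm{dist}(x,\dd\Omega)|\nabla^2 u_{0,\d}|^2\,dx\lesssim\|f\|_{H^1(\dd\Omega)}^2$ is the square-function restatement of the paper's coarea calculation, so the underlying input (Dahlberg--Kenig--Verchota regularity theory for constant-coefficient systems on Lipschitz domains) is the same in both.

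The one place your outline is noticeably less careful than the paper is the Meyers step (Lemma~\ref{885}). You propose to run Caccioppoli--Sobolev in the substrate and then ``propagate it into the inclusions via the equation'' there. Two issues: first, testing the equation in the substrate alone against $u\zeta^2$ produces a boundary integral on $\dd(\e\omega)$ involving the conormal flux, which by the transmission condition is $\d$ times the flux from the inclusion side --- it is not zero and not obviously controlled independently of $\d$, so the ``standard'' self-improvement in the substrate is not self-contained. Second, propagating an interior Meyers bound into the inclusions leaves the part of each inclusion near its boundary uncontrolled, because there the estimate must again pass through the $\d$-dependent transmission condition. The paper sidesteps both problems by running Caccioppoli (Lemma~\ref{fortysix}) for the \emph{composite} operator on balls that may straddle $\dd(\e\omega)$, which produces no interface boundary terms, and then splitting $\lr{}u$ as $\d u+\one_+^\e u$ and using the extension $P_\e$ only at the Poincar\'e--Sobolev step. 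The reverse H\"older inequality is thereby obtained for the full weighted gradient $|\lr{}\nabla u_{\e,\d}|$ at once. You should adopt that decomposition rather than the substrate-then-inclusion strategy; otherwise your outline, and the target statement, are correct.
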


\begin{proof}
Recall that $u_{0,\d}$ satisfies $\mathcal{L}_{0,\d}(u_{0,\d})=0$ in $\Omega$, and so it follows from estimates for solutions in Lipschitz domains to constant-coefficient systems that
\begin{equation}\label{881}
\|(\nabla u_{0,\d})^*\|_{L^2(\dd\Omega)}\leq C\|f\|_{H^1(\dd\Omega)},
\end{equation}
where $(\nabla u_{0,\d})^*$ denotes the nontangential maximal function of $\nabla u_{0,\d}$ (see~\cite{dahlberg}).  By the coarea formula,
\begin{equation}\label{882}
\|\nabla u_{0,\d}\|_{L^2(\mathcal{O}_{4\e})}\leq C\e^{1/2}\|(\nabla u_{0,\d})^*\|_{L^2(\dd\Omega)}\leq C\e^{1/2}\|f\|_{H^1(\dd\Omega)}.
\end{equation}

Notice that if $u_{0,\d}$ solves~\eqref{874}, then $\mathcal{L}_{0,\d}(\nabla u_{0,\d})=0$ in $\Omega$, and so we may use an interior Lipschitz estimate for $\mathcal{L}_{0,\d}$.  That is,
\begin{equation}\label{883}
|\nabla^2 u_{0,\d}(x)|\leq\dfrac{C}{\rho(x)}\left(\dashint_{B(x,\rho(x)/8)}|\nabla u_{0,\d}|^2\right)^{1/2},
\end{equation}
where $\rho(x)=\text{dist}(x,\dd\Omega)$.  In particular,
\begin{align}
\|(\nabla^2 u_{0,\d})\eta_\e\|_{L^2(\Omega)} &\leq \left(\dint_{\Omega\backslash\mathcal{O}_{3\e}}|\nabla^2 u_{0,\d}|^2\right)^{1/2} \nonumber\\
&\leq C\left(\dint_{\Omega\backslash\mathcal{O}_{3\e}}\dashint_{B(x,\rho(x)/8)}\left|\dfrac{\nabla u_{0,\d}(y)}{\rho(x)}\right|^2\,dy\>dx\right)^{1/2} \nonumber\\
&\leq C\left(\dint_{3\e}^{C_0}t^{-2}\dint_{\dd\mathcal{O}_t\cap\Omega}\dashint_{B(x,t/8)}|\nabla u_{0,\d}(y)|^2\,dy \>dS(x)\>dt\right)^{1/2} \nonumber\\
&\hspace{30mm}+C_1\left(\dint_{\Omega\backslash\mathcal{O}_{C_0}}|\nabla u_{0,\d}|^2\right)^{1/2}\nonumber\\
&\leq C\|(\nabla u_{0,\d})^*\|_{L^2(\dd\Omega)}\left(\dint_{3\e}^{C_0}t^{-2}\,dt\right)^{1/2}+C_1\|\nabla u_{0,\d}\|_{L^2(\Omega)} \nonumber\\
&\leq C\left\{\e^{-1/2}\|f\|_{H^1(\dd\Omega)}+\|f\|_{H^{1/2}(\dd\Omega)}\right\} \nonumber\\
&\leq C_0\e^{-1/2}\|f\|_{H^1(\dd\Omega)}\label{}.
\end{align}
where $C_0$ is a constant depending on $\Omega$, and we have used~\eqref{873},~\eqref{881}~\eqref{882}, the coarea formula, energy estimates, and~\eqref{883}.  Hence,
\begin{equation}\label{884}
\e\|\smoothone{(\nabla^2 u_{0,\d})\eta_\e}{\e}\|_{L^2(\Omega)}\leq C\e^{1/2}\|f\|_{H^1(\dd\Omega)}.
\end{equation}

By Lemma~\ref{thirtysix},
\begin{equation}\label{887}
\|(\nabla u_{0,\d})\eta_\e-\smoothone{(\nabla u_{0,\d})\eta_\e}{\e}\|_{L^2(\Omega)}\leq C\e^{1/2}\|f\|_{H^1(\dd\Omega)}.
\end{equation}
where the last inequality follows from~\eqref{884} and~\eqref{873}.

Finally, we establish a $W^{1,p}$-estimate for some $p>2$ for $u_{\e,\d}$ uniform in $\e$ and $\d$ by establishing a reverse H\"older inequality.  Indeed, if there exists a $p>2$ so that
\[
\left(\int_{\Omega}|\lr{}\nabla u_{\e,\d}|^p\right)^{1/p}\leq C\|f\|_{H^1(\dd\Omega)},
\]
then H\"older's inequality implies
\begin{equation}\label{886}
\dint_{\mathcal{O}_{4\e}}|\lr{}\nabla u_{\e,\d}|^2\leq C\e^{(p-2)/p}\|f\|_{H^1(\Omega)}^2.
\end{equation}
The existence of such a $p$ follows from the Lemma~\ref{885}.  Equations~\eqref{882}, \eqref{884},~\eqref{887}, and~\eqref{886} give the desired result.
\end{proof}

\begin{lemm}\label{885}
There exists a $p_0>2$ such that
\[
\left(\dint_{\Omega}|\lr{}\nabla u_{\e,\d}|^{p_0}\right)^{1/p_0}\leq C\|f\|_{H^1(\dd\Omega)}
\]
for some constant $C$ depending on $\kappa_1$, $\kappa_2$, $d$, $p_0$, and $\Omega$.
\end{lemm}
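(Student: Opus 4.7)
The plan is to establish this higher integrability bound via the classical Meyers scheme: prove a reverse Hölder inequality for $|\lr{}\nabla u_{\e,\d}|^2$ on balls with constants independent of $\e$ and $\d$, then invoke Gehring's self-improving lemma. For an interior ball $B(x_0,2r)\subset\Omega$, I would first test the weak formulation~\eqref{fortyfour} with $\phi=\eta^2(u_{\e,\d}-c)$, where $\eta\in C_0^\infty(B(x_0,2r))$ satisfies $\eta\equiv 1$ on $B(x_0,r)$ and $|\nabla\eta|\leq C/r$, and $c\in\R^d$ is a constant to be specified. Using the ellipticity~\eqref{two} together with Young's inequality yields the Caccioppoli-type bound
\[
\int_{B(x_0,r)}\lr{2}|\nabla u_{\e,\d}|^2\leq\int_{B(x_0,r)}\lr{}|\nabla u_{\e,\d}|^2\leq\frac{C}{r^2}\int_{B(x_0,2r)}\lr{}|u_{\e,\d}-c|^2,
\]
with $C$ depending only on $\kappa_1$ and $\kappa_2$, using that $\lr{}\leq 1$.

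The principal step is to dominate the right-hand side by a sub-quadratic norm of $|\lr{}\nabla u_{\e,\d}|$ with constants independent of $\d$. For this I would apply the extension operator $P_\e$ of Theorem~\ref{twentyfive} to produce $\widetilde u=P_\e(u_{\e,\d}|_{B(x_0,2r)\cap\e\omega})\in H^1(B(x_0,3r);\R^d)$, satisfying $\widetilde u=u_{\e,\d}$ on the substrate and $\|\nabla\widetilde u\|_{L^2(B(x_0,3r))}\leq C\|\nabla u_{\e,\d}\|_{L^2(B(x_0,2r)\cap\e\omega)}=C\|\lr{}\nabla u_{\e,\d}\|_{L^2(B(x_0,2r)\cap\e\omega)}$. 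Choosing $c=\dashint_{B(x_0,3r)}\widetilde u$ and splitting the integral into its substrate and inclusion pieces, the substrate contribution $\int_{B(x_0,2r)\cap\e\omega}|\widetilde u-c|^2$ is controlled by Sobolev-Poincaré applied to $\widetilde u$ and bounded by $(\int_{B(x_0,3r)}|\lr{}\nabla u_{\e,\d}|^{q^*})^{2/q^*}$ with $q^*=2d/(d+2)$. The inclusion contribution carries the factor $\lr{}=\d$; since each connected inclusion $\e H_k$ has diameter $O(\e)$, is separated from its neighbors by the distance in~\eqref{nine}, and satisfies $u_{\e,\d}=\widetilde u$ on $\partial(\e H_k)$ (as both lie in $H^1$ and agree on the substrate), a local Poincaré inequality on $\e H_k$ combined with the $\d$ weight converts the inclusion integral into a contribution controlled by $(\int|\lr{}\nabla u_{\e,\d}|^{q^*})^{2/q^*}$ and $\|\nabla\widetilde u\|_{L^2}$, uniformly in $\d$.

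Combining these pieces yields the interior reverse Hölder inequality
\[
\left(\dashint_{B(x_0,r)}|\lr{}\nabla u_{\e,\d}|^2\right)^{1/2}\leq C\left(\dashint_{B(x_0,3r)}|\lr{}\nabla u_{\e,\d}|^{q^*}\right)^{1/q^*}
\]
with $C$ independent of $\e$ and $\d$. A boundary version is derived analogously by testing with $\eta^2(u_{\e,\d}-F)$ for an extension $F\in H^1(\Omega;\R^d)$ of the Dirichlet data satisfying $\|F\|_{H^1(\Omega)}\leq C\|f\|_{H^1(\dd\Omega)}$, producing an inhomogeneous reverse Hölder with an additive source of order $\|f\|_{H^1(\dd\Omega)}$. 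Gehring's self-improving lemma then delivers some $p_0>2$ and the stated global $L^{p_0}$ estimate. The main obstacle is the $\d$-uniform treatment of the inclusion contribution in the Poincaré-Sobolev step; a direct approach would produce a $\d^{-1}$ factor when relating $|\nabla u_{\e,\d}|$ on inclusions to $|\lr{}\nabla u_{\e,\d}|$, and it is the $\e$-scale periodic geometry together with the $\d$-free bounds for the extension operator $P_\e$ that allow this factor to be absorbed.
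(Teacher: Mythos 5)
Your plan follows the same overall route as the paper: a Caccioppoli inequality for $\mathcal{L}_{\e,\d}$, decomposition of the weighted integrand into a substrate piece handled via the extension operator $P_\e$ of Theorem~\ref{twentyfive} and a $\d$-weighted piece, a Sobolev--Poincar\'e step at exponent $s=2d/(d+2)$, Gehring's self-improvement, and then a boundary version using an $H^1$ extension of the data followed by a covering argument. The paper's treatment of the $\d$-weighted piece is more economical than yours, however: with $c=\dashint_{B(x_0,2r)}u_{\e,\d}$ it applies Sobolev--Poincar\'e to $u_{\e,\d}$ itself to get $\frac{\d}{r}\|u_{\e,\d}-c\|_{L^2}\leq C\d\|\nabla u_{\e,\d}\|_{L^s}=C\|\d\nabla u_{\e,\d}\|_{L^s}\leq C\|\lr{}\nabla u_{\e,\d}\|_{L^s}$, using only the pointwise inequality $\d\leq\lr{}$. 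No per-inclusion argument is invoked at all.

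Your approach instead bounds $\d\|u_{\e,\d}-\widetilde u\|_{L^2}$ over the inclusions via a local Poincar\'e inequality on each $\e H_k$. Here there is a genuine issue: the Poincar\'e inequality at scale $\e$ produces a factor $\e$, so after dividing by $r$ from the Caccioppoli step the contribution is of order $(\e/r)\|\lr{}\nabla u_{\e,\d}\|_{L^2(B(x_0,2r))}$. This is of the same homogeneity as the left-hand side of the reverse H\"older inequality you are trying to prove, and for $r\lesssim\e$ the prefactor $\e/r$ is not small, so the term cannot be absorbed. Since Gehring's lemma requires the reverse H\"older estimate to hold uniformly over \emph{all} admissible radii, this breaks the argument at small scales. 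The repair is to replace the $\e$-scale Poincar\'e by the scale-invariant Sobolev embedding $W^{1,s}_0(\e H_k)\hookrightarrow L^2(\e H_k)$ with $s=2d/(d+2)$ (note $s^*=2$), which gives $\|u_{\e,\d}-\widetilde u\|_{L^2(\e H_k)}\leq C\|\nabla(u_{\e,\d}-\widetilde u)\|_{L^s(\e H_k)}$ with a constant independent of $\e$; after summing over $k$, multiplying by $\d$, and applying the extension-operator bound from Theorem~\ref{twentyfive}, this yields a term of the correct form $C\bigl(\dashint_{B(x_0,3r)}|\lr{}\nabla u_{\e,\d}|^s\bigr)^{1/s}$ with no $\e/r$ loss. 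Alternatively, simply follow the paper's shortcut of applying Sobolev--Poincar\'e to $u_{\e,\d}$ directly and invoking $\d\leq\lr{}$, which bypasses the per-inclusion estimate entirely. You should also note that your assertion that one can take a single $c=\dashint\widetilde u$ for both the substrate and $\d$-weighted pieces hides a cross-term $|\dashint u_{\e,\d}-\dashint\widetilde u|$ whose estimation is precisely the per-inclusion step, so the issue above cannot be sidestepped by a clever choice of $c$ within your framework.
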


\begin{proof}
The desired estimate essentially follows from Cacciopoli's inequality, the Poincar\'e-Sobolev inequality, and the self-improving property of reverse H\"older inequalities.  We prove an interior estimate, and the boundary estimate follows with an analogous proof.

Take $B(x_0,2r)\subset\Omega$, and note that Cacciopoli's inequality (see Lemma~\ref{fortysix}) implies
\begin{align*}
&\left(\dashint_{B(x_0,r)}|\lr{}\nabla u_{\e,\d}|^2\right)^{1/2}\\
&\hspace{10mm}\leq \dfrac{C}{r}\left(\dashint_{B(x_0,2r)}|\lr{} u_{\e,\d}|^2\right)^{1/2} \\
&\hspace{10mm}\leq\dfrac{C}{r}\left\{\d\left(\dashint_{B(x_0,2r)}|u_{\e,\d}|^2\right)^{1/2}+\left(\dashint_{B(x_0,2r)}|P_\e(\one_+^\e u_{\e,\d})|^2\right)^{1/2}\right\},
\end{align*}
which is invariant if we subtract a constant vector from $u_{\e,\d}$.  If we subtract the average value of $u_{\e,\d}$ over the ball $B(x_0,2r)$, then by the Poincar\'e-Sobolev ineqaulity
\begin{align*}
&\left(\dashint_{B(x_0,r)}|\lr{}\nabla u_{\e,\d}|^2\right)^{1/2} \\
&\hspace{10mm}\leq\d\left(\dashint_{B(x_0,2r)}|\nabla u_{\e,\d}|^{s}\right)^{1/s}+\dfrac{C}{r}\left(\dashint_{B(x_0,2r)}|P_\e(\one_+^\e u_{\e,\d})|^2\right)^{1/2},
\end{align*}
where $s=2d/(d+2)$.  Similarly, by subtracting another constant we can show
\begin{align*}
&\left(\dashint_{B(x_0,r)}|\lr{}\nabla u_{\e,\d}|^2\right)^{1/2} \\
&\hspace{10mm}\leq\d\left(\dashint_{B(x_0,2r)}|\nabla u_{\e,\d}|^{s}\right)^{1/s}+\left(\dashint_{B(x_0,2r)}|\nabla P_\e(\one_+^\e u_{\e,\d})|^s\right)^{1/s},
\end{align*}
which by Lemma~\ref{twentyfive} shows
\[
\left(\dashint_{B(x_0,r)}w^{q}\right)^{1/q}\leq C\dashint_{B(x_0,2r)}w,
\]
where $w=|\lr{}\nabla u_{\e,\d}|^{s}$ and $q=2/s$.  By the self-improving property of reverse H\"older inequalities (see~\cite[Chapter V, Proposition 1.1]{giaquinta}),
\[
\left(\dashint_{B(x_0,r)}w^{t}\right)^{1/t}\leq C\left(\dashint_{B(x_0,2r)}w^q\right)^{1/q},
\]
for any $t\in [q,q+\nu)$ for some $\nu>0$ depending on $\kappa_1$, $\kappa_2$, and $d$.  That is,
\begin{equation}\label{889}
\left(\dashint_{B(x_0,r)}|\lr{}\nabla u_{\e,\d}|^p\right)^{1/p}\leq C\left(\dashint_{B(x_0,2r)}|\lr{}\nabla u_{\e,\d}|^2\right)^{1/2}
\end{equation}
for any $p\in [2,2+\nu)$ and any $B(x_0,2r)\subset\Omega$.

We may show a similar estimate for any ball $B(x_0,2r)$ with $x_0\in\dd\Omega$.  That is, if $F=f$ on $\dd\Omega$ and $F\in H^{3/2}(\Omega)$, then the continuous injection $H^{3/2}(\Omega)\hookrightarrow W^{1,q}(\Omega)$ for any $q\geq 2d/(d-1)$ gives the estimate
\begin{align}
&\left(\dashint_{B(x_0,r)\cap\Omega}|\lr{}\nabla u_{\e,\d}|^p\right)^{1/p} \nonumber\\
&\hspace{10mm}\leq C\left(\dashint_{B(x_0,2r)\cap\Omega}|\lr{}\nabla u_{\e,\d}|^2\right)^{1/2}+\left(\dashint_{B(x_0,2r)\cap\Omega}|\nabla F|^q\right)^{1/q}.\label{890}
\end{align}
Patching together inequalities~\eqref{889} and~\eqref{890} gives the desired estimate for some $p_0>2$.
\end{proof}

\begin{proof}[Proof of Theorem~\ref{twelve}]

Note $\d r_{\e,\d}\in H_0^1(\Omega;\R^d)\subset H^1(\Omega,\Gamma_\e;\R^d)$, and so by Lemmas~\ref{892} and~\eqref{two},
\begin{align*}
\|\d e(r_{\e,\d})\|^2_{L^2(\Omega)} &\leq C\d\dint_{\Omega}\lr{}A^\e\nabla r_{\e,\d}\cdot\nabla r_{\e,\d} \\
&\leq C\e^{\mu}\|f\|_{H^1(\dd\Omega)}\|\d\nabla r_{\e,\d}\|_{L^2(\Omega)},
\end{align*}
where $e(r_{\e,\d})$ denotes the symmetric part of $\nabla r_{\e,\d}$.  Korn's first inequality then implies
\[
\|\d\nabla r_{\e,\d}\|^2_{L^2(\Omega)}\leq C\|\d e(r_{\e,\d})\|^2_{L^2(\Omega)}\leq C\e^{\mu}\|f\|_{H^1(\dd\Omega)}\|\d\nabla r_{\e,\d}\|_{L^2(\Omega)},
\]
and so
\begin{equation}\label{901}
\|\d\nabla r_{\e,\d}\|_{L^2(\Omega)}\leq C\e^{\mu}\|f\|_{H^1(\dd\Omega)}.
\end{equation}

Note also $P_\e(\one_+^\e r_{\e,\d})\in H^1(\Omega,\Gamma_\e;\R^d)$, and so by Lemmas~\ref{892} and~\ref{twentyfive},
\begin{align*}
&\|\one_+^\e e[P_\e(\one_+^\e r_{\e,\d})]\|^2_{L^2(\Omega)} \\
&\hspace{10mm}\leq C\dint_{\Omega}\lr{}A^\e\nabla r_{\e,\d}\cdot\nabla P_\e(\one_+^\e r_{\e,\d})-\d\dint_{\Omega}\one_-^\e A^\e\nabla r_{\e,\d}\cdot\nabla P_\e(\one_+^\e r_{\e,\d}) \\
&\hspace{10mm}\leq C\e^{\mu}\|f\|_{H^1(\dd\Omega)}\|\one_+^\e\nabla r_{\e,\d}\|_{L^2(\Omega)},
\end{align*}
where we've used~\eqref{901}.  Korn's first inequality for periodically perforated domains then implies
\[
\|\one_+^\e\nabla r_{\e,\d}\|^2_{L^2(\Omega)}\leq C\|\one_+^\e e[P_\e(\one_+^\e r_{\e,\d})]\|^2_{L^2(\Omega)}\leq C\e^{\mu}\|f\|_{H^1(\dd\Omega)}\|\one_+^\e\nabla r_{\e,\d}\|_{L^2(\Omega)},
\]
and so
\begin{equation}\label{902}
\|\one_+^\e\nabla r_{\e,\d}\|_{L^2(\Omega)}\leq C\e^{\mu}\|f\|_{H^1(\dd\Omega)}.
\end{equation}
Equations~\eqref{901} and~\eqref{902} give the desired estimate.  Indeed,
\[
\|\lr{}\nabla r_{\e,\d}\|_{L^2(\Omega)}\leq \|\one_+^\e \nabla r_{\e,\d}\|_{L^2(\Omega)}+\|\d\nabla r_{\e,\d}\|_{L^2(\Omega)} \leq C\e^{\mu}\|f\|_{H^1(\dd\Omega)}
\]
\end{proof}

\section{Interior Estimates at the large scale}\label{section4}
%
%
%
%
%

In this section, we discuss \textit{a priori} interior estimates for the boundary value problem~\eqref{three} at the macroscopic scale by proving Theorem~\ref{five}.  By macroscopic, we refer to the case when $\e/r\leq 1$.  Throughout this section, let $B(r)\equiv B(x_0,r)$ denote the ball of radius $r>0$ centered at some $x_0\in\R^d$.

The following lemma is essentially Cacciopoli's inequality for the operator $\mathcal{L}_{\e,\d}$ defined by~\eqref{fortythree}.  The proof is similar to a proof of the classical Cacciopoli's inequality, but nevertheless we present a proof for completeness.

\begin{lemm}\label{fortysix}
Suppose $\mathcal{L}_{\e,\d}(u_{\e,\d})=0$ in $B(2r)$ for some $r>0$.  Then
\[
\left(\dashint_{B(r)}|\lr{}{\nabla u_{\e,\d}}|^2\right)^{1/2}\leq \dfrac{C}{r}\left(\dashint_{B(2r)}|\lr{}{u_{\e,\d}}|^2\right)^{1/2}
\]
where $C$ depends only on $\kappa_1$ and $\kappa_2$.
\end{lemm}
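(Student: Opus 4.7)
The plan is to follow the classical Caccioppoli testing argument, tailored to the weighted elasticity operator $\mathcal{L}_{\e,\d}$. The two technical wrinkles are that the ellipticity \eqref{two} controls only the symmetric gradient $e(u_{\e,\d})$ and that the weight $\lr{}$ is one on the substrate $\e\omega$ and $\d$ on the inclusions; both must be handled so that the final constant depends only on the ellipticity constants $\kappa_1,\kappa_2$.

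First I would fix a cutoff $\varphi\in C_0^\infty(B(2r))$ with $\varphi\equiv 1$ on $B(r)$, $0\le\varphi\le 1$, and $|\nabla\varphi|\le C/r$, and test the weak formulation \eqref{fortyfour} with $\varphi^2 u_{\e,\d}\in H_0^1(B(2r);\R^d)$. Expanding $\nabla(\varphi^2 u_{\e,\d})$ and rearranging gives
\[
\dint_{B(2r)} \varphi^2 \lr{} A^\e \nabla u_{\e,\d}\cdot\nabla u_{\e,\d} = -2\dint_{B(2r)}\varphi \lr{} A^\e \nabla u_{\e,\d}\cdot(u_{\e,\d}\otimes \nabla\varphi).
\]
The elasticity symmetries \eqref{one} convert the left-hand integrand into $A^\e e(u_{\e,\d}):e(u_{\e,\d})$, so that \eqref{two}, Cauchy-Schwarz, and Young's inequality (with a small parameter $\eta>0$) produce
\[
\dint \varphi^2 \lr{} |e(u_{\e,\d})|^2 \le \eta \dint \varphi^2 \lr{} |\nabla u_{\e,\d}|^2 + \frac{C_\eta}{r^2} \dint_{B(2r)} \lr{} |u_{\e,\d}|^2.
\]

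To close the estimate, I would then establish a weighted Korn-type inequality
\[
\dint \varphi^2 \lr{} |\nabla u_{\e,\d}|^2 \le C \dint \varphi^2 \lr{} |e(u_{\e,\d})|^2 + \frac{C}{r^2}\dint_{B(2r)} \lr{} |u_{\e,\d}|^2
\]
by splitting the domain into the substrate $\e\omega\cap B(2r)$ (where $\lr{}=1$) and the inclusions $B(2r)\setminus\e\omega$ (where $\lr{}=\d$). On the substrate I would apply Korn's first inequality to the extension $P_\e(\one_+^\e\varphi u_{\e,\d})$ furnished by Theorem~\ref{twentyfive}, and on each bounded, simply connected inclusion $\e H_k$ I would apply Korn's second inequality to $\varphi u_{\e,\d}$; the weight $\d$ multiplies the inclusion contribution, and the periodicity \eqref{four} together with the separation \eqref{nine} keep the constants uniform in $\e$ and $\d$. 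Substituting back and absorbing the $\eta$ term yields $\dint \varphi^2 \lr{} |\nabla u_{\e,\d}|^2 \le (Cr^{-2}) \dint_{B(2r)} \lr{} |u_{\e,\d}|^2$, and passing to averages on the concentric balls, together with $\varphi\equiv 1$ on $B(r)$ and $(\lr{})^2\le\lr{}$, gives the claim.

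The main obstacle is the weighted Korn step: producing a constant that does not depend on $\d$. The connectedness of $\omega$ and the uniformity of the extension operator $P_\e$ are crucial on the substrate, while on the disjoint inclusions one needs to track the scaling of Korn's second inequality at scale $\e$ and verify that the lower-order $L^2$ contributions are absorbed into the $r^{-2}\int \lr{} |u_{\e,\d}|^2$ term on the right. Everything else is the standard absorption and rescaling mechanics.
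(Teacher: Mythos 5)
Your overall plan---test the weak formulation with $\varphi^2 u_{\e,\d}$, expand, absorb, and pass to averages---is the same Caccioppoli mechanism used in the paper.  The paper's proof is in fact terser: it does not isolate a separate Korn step at all, but instead splits the gradient term into its $\one_+^\e$ and $\one_-^\e$ pieces after applying Cauchy-Schwarz (with asymmetric allocation of the weight $\lr{}$, in particular multiplying the energy inequality through by $\d$ before estimating the cross term) and then absorbs.  Your added attention to the fact that \eqref{two} only controls the symmetric gradient is reasonable, but the way you propose to close the Korn step introduces a genuine problem.

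The scaling of your weighted Korn inequality on the inclusions does not match the claim.  Korn's second inequality on a domain $\e H_k$ of diameter $\sim\e$ produces a lower-order term with a factor $\e^{-2}$, not $r^{-2}$, i.e.,
\[
\dint_{\e H_k}|\nabla v|^2 \leq C\dint_{\e H_k}|e(v)|^2 + \dfrac{C}{\e^2}\dint_{\e H_k}|v|^2.
\]
In the regime where this lemma is actually used ($\e\leq r$), we have $\e^{-2}\geq r^{-2}$, so this lower-order term cannot be absorbed into the $r^{-2}\dint\lr{}|u|^2$ contribution; your claim that it can be is the gap.  The fix is simpler than what you propose: since $\varphi u_{\e,\d}\in H_0^1(B(2r);\R^d)$, Korn's first inequality on the whole ball (with no lower-order term) gives $\dint_{B(2r)}|\nabla(\varphi u_{\e,\d})|^2\leq 2\dint_{B(2r)}|e(\varphi u_{\e,\d})|^2$; multiplying by $\d^2$ and using $\d^2\leq 1$ on $\e\omega$ and $\d^2\leq \d$ on the inclusions then bounds $\d^2\dint\one_-^\e|\nabla(\varphi u_{\e,\d})|^2$ by $C\dint\lr{}|e(\varphi u_{\e,\d})|^2$ with no $\e$-dependence.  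The piecewise Korn-on-each-inclusion strategy is neither necessary nor sufficient here.

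There is a second, more minor mismatch in the final step.  The lemma's two sides carry the weight $|\lr{}\cdot|^2=(\lr{})^2|\cdot|^2$, i.e., $\one_+^\e+\d^2\one_-^\e$.  Your displayed inequalities carry $\lr{}|\cdot|^2$, i.e., $\one_+^\e+\d\one_-^\e$, which is a \emph{larger} weight.  The observation ``$(\lr{})^2\leq\lr{}$'' lets you downgrade the left side from $\lr{}|\nabla u|^2$ to the smaller $|\lr{}\nabla u|^2$, but it goes the wrong way on the right: you finish with $\lr{}|u_{\e,\d}|^2$, which dominates the claimed $|\lr{}u_{\e,\d}|^2$.  The paper handles the inclusion contribution by first multiplying the energy inequality through by $\d$, which makes both factors in the subsequent Cauchy-Schwarz carry the weight $\lr{2}$; you would need a corresponding adjustment in your Cauchy-Schwarz/Young step to land on the stated RHS rather than the weaker $\lr{}$-weighted quantity.
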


\begin{proof}
By rescaling we may assume $r=1$, i.e., set $U(x)=u_{\e,\d}(rx)$ and note $U$ satisfies $\mathcal{L}_{\e/r,\d}(U)=0$ in $B(2)$.  Let $\z\in C_0^\infty (B(2))$.  Then
\begin{align}
0 &= \dint_{B(2)}\lr{}A_{}^\e\nabla u\cdot\nabla (u\z^2) \nonumber\\
&\geq \kappa_1\dint_{B(2)}\lr{}|\nabla u|^2\z^2-2\dint_{B(2)}(u\z)\lr{}A_{}^\e\nabla u\cdot\nabla \z, \label{fortyseven}
\end{align}
where $u\equiv u_{\e,\d}$.  Equation~\eqref{fortyseven}, $\d\leq 1$, and Cauchy-Schwarz imply
\begin{align*}
&\dint_{B(2)}\one_-^\e|\d\nabla u|^2\z^2\leq \dfrac{C_1}{\g}\dint_{B(2)}\lr{2}|\nabla u|^2\z^2+\g C_2\dint_{B(2)}\lr{2}u^2|\nabla\z|^2
\end{align*}
for any $\g>0$.  Similarly, equation~\eqref{fortyseven} and Cauchy-Schwarz give
\[
\dint_{B(2)}\one_+^\e|\nabla u|^2\zeta^2\leq \dfrac{C_1}{\g'}\dint_{B(2)}\lr{2}|\nabla u|^2\z^2+\g' C_2\dint_{B(2)}\lr{2}u^2|\nabla\z|^2
\]
for any $\g'>0$.  Choosing $\g$, $\g'$ large enough gives
\[
\dint_{B(2)}|\lr{}\nabla u|^2\z^2\leq C\dint_{B(2)}|\lr{} u|^2|\nabla \z^2|
\]
for some constant $C$ depending on $\kappa_1$ and $\kappa_2$.  Choose $\z$ so that $\z\equiv 1$ in $B(1)$ and $|\nabla \z|\leq C$.  The desired inequality follows.
\end{proof}

\begin{lemm}\label{fiftyone}
Suppose $\mathcal{L}_{\e,\d}(u_{\e,\d})=0$ in $B(3r)$.  There exists $v\in H^1(B(r);\R^d)$ satisfying $\mathcal{L}_{0,\d}(v)=0$ in $B(r)$ and 
\begin{align*}
&\left(\dashint_{B(r)}|\lr{}({u_{\e,\d}-v})|^2\right) \leq C\left(\dfrac{\e}{r}\right)^{\mu}\left(\dashint_{B(3r)}|\lr{}{u_{\e,\d}}|^2\right)^{1/2},
\end{align*}
where $C$ depends on $\kappa_1$, $\kappa_2$, and $d$ and $\mu>0$.
\end{lemm}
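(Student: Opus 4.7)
The plan is to reduce to the case $r = 1$ by scaling and then construct $v$ as the Dirichlet solution of the homogenized constant coefficient system on a ball of intermediate radius $t \in [2,3]$ with boundary trace inherited from $u_{\e,\d}$; Theorem~\ref{twelve} will then deliver the rate directly. Setting $\tilde u(y) = u_{\e,\d}(x_0 + ry)$ produces a weak solution of $\mathcal{L}_{\e/r,\d}\tilde u = 0$ on $B(0,3)$ with a translated periodic coefficient of the same class, and both sides of the inequality are scale invariant under this change of variables. After the reduction I only need to exhibit $v \in H^1(B(0,1);\R^d)$ with $\mathcal{L}_{0,\d}v = 0$ and $\|\lr{}(\tilde u - v)\|_{L^2(B(0,1))} \leq C\e^\mu \|\lr{}\tilde u\|_{L^2(B(0,3))}$.

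To carry this out I would apply Fubini to $\int_2^3 \|\tilde u\|_{H^1(\dd B(0,s))}^2 \, ds$, selecting a good radius $t \in [2,3]$ for which $\tilde u|_{\dd B(0,t)}$ lies in $H^1(\dd B(0,t))$ with norm bounded by an annular interior norm of $\tilde u$ on $B(0,3)\setminus B(0,2)$, and then define $v$ by $\mathcal{L}_{0,\d} v = 0$ on $B(0,t)$ with $v = \tilde u$ on $\dd B(0,t)$; uniform ellipticity of $\widehat A_\d$ (Lemma~\ref{twentyone}) ensures solvability and $\d$-uniform $\|\nabla v\|_{L^2(B(0,t))}$ bounds. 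Theorem~\ref{twelve} applied on $\Omega = B(0,t)$ then supplies
\[
\|\lr{} r_\e\|_{L^2(B(0,t))} + \|\lr{}\nabla r_\e\|_{L^2(B(0,t))} \leq C\e^\mu \|\tilde u\|_{H^1(\dd B(0,t))},
\]
where $r_\e = \tilde u - v - \e\chi_\d^\e \smoothtwo{(\nabla v)\eta_\e}{\e}$. Lemma~\ref{twentyseven} together with the uniform bound $\|\lrnoe{}\chi_\d\|_{L^2(Q)}\leq C$ recalled after~\eqref{nineteen} estimates the corrector term by $C\e\|\nabla v\|_{L^2(B(0,t))}$, which is absorbed into the $C\e^\mu$ error whenever $\mu\leq 1$, so restricting back to $B(0,1)$ completes the bound once the trace norm has been controlled by the weighted interior norm.

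The main obstacle is making the good-radius estimate uniform in $\d\in[0,1]$: the usual Fubini-plus-Cacciopoli bound would give $\|\tilde u\|_{H^1(\dd B(0,t))}\leq C\|\nabla \tilde u\|_{L^2(B(0,3))}$, but the unweighted gradient is not controlled by the hypothesis once $\d\to 0$ inside the inclusions. To recover a uniform estimate I would split the trace by writing $\tilde u = \one_+^\e \tilde u + \one_-^\e \tilde u$: on the substrate, the extension operator $P_\e$ of Theorem~\ref{twentyfive} applied to $\one_+^\e\tilde u$ yields an unweighted $H^1$ function whose norm is controlled, through Cacciopoli (Lemma~\ref{fortysix}), in terms of $\|\one_+^\e\tilde u\|_{L^2(B(0,3))}\leq \|\lr{}\tilde u\|_{L^2(B(0,3))}$; on the complement the factor $\d$ appearing in $\lr{}$ is absorbed directly, so the weighted Cacciopoli estimate suffices. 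The Meyers-type reverse H\"older inequality of Lemma~\ref{885} supplies the higher integrability needed for the Fubini selection to pick $t$ out of a set of positive measure.
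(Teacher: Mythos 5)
Your scaffold — rescale to $r=1$, pick a good radius $t$ by a Fubini/coarea argument, solve the homogenized Dirichlet problem on $B(t)$, and invoke Theorem~\ref{twelve} — matches the paper's structure, but the central choice you make does not work: you pose $\mathcal{L}_{0,\d}v = 0$ with $v = u$ on $\dd B(t)$, which forces Theorem~\ref{twelve} to produce a right-hand side proportional to $\|u\|_{H^1(\dd B(t))}$, and this quantity is \emph{not} controlled uniformly in $\d$ as $\d \to 0$. The hypothesis only controls $\|\lr{}\nabla u\|_{L^2(B(3))}$, and even after Cacciopoli and Fubini the trace selection gives you $\d\|\nabla u\|_{H^1(\dd B(t)\setminus\e\omega)}$ rather than $\|\nabla u\|_{H^1(\dd B(t)\setminus\e\omega)}$. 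Your proposed remedy — split $\tilde u = \one_+^\e\tilde u + \one_-^\e\tilde u$ and extend the substrate part — does not close the gap: the Dirichlet boundary condition for $v$ is still the full $\tilde u$, and nothing you write down controls $\one_-^\e\tilde u$ on $\dd B(t)$ without the damping factor $\d$.

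The missing idea is to change the boundary data for $v$ to $P_\e(\one_+^\e u)|_{\dd B(t)}$, which the extension estimates of Theorem~\ref{twentyfive} together with Cacciopoli bound in $H^1(\dd B(t))$ uniformly in $\d$, and which agrees with $u$ on $\dd B(t)\cap\e\omega$. Theorem~\ref{twelve} is then applied with this boundary data and produces the clean $C\e^\mu\|P_\e(\one_+^\e u)\|_{H^1(\dd B(t))}$ term. The price is a residual term $\d\|\nabla P_\e(\one_+^\e u) - \nabla u\|_{L^2(B(t))}$ that accounts for the mismatch inside the inclusions; the paper controls it via Cacciopoli applied to $w = P_\e(\one_+^\e u) - u$ (which solves the same operator since $w$ agrees with a solution in $\e\omega$) together with a Poincar\'e inequality on the inclusions exploiting $w = 0$ a.e.\ in $B(3)\cap\e\omega$, which gives an extra factor $\e$. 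You also need to shrink your radius window: with the solution available only on $B(3)$, choosing $t\in[2,3]$ would make Cacciopoli on $B(t)$ ask for the solution on $B(2t)\subset B(6)$; the paper's $t\in[1,5/4]$ is the safe range.
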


\begin{proof}
First we prove the lemma for $r=1$.  By Lemma~\ref{fortysix} and estimate~\eqref{fortynine} in Theorem~\ref{twentyfive} of Section~\ref{section2},
\begin{align*}
&\left(\dashint_{B(3/2)}|\nabla P_\e(\one_+^\e u_{})|^2\right)^{1/2}+\left(\dashint_{B(3/2)}|\d\nabla u_{}|^2\right)^{1/2}\leq C\left(\dashint_{B(3)}|\lr{}{u_{}}|^2\right)^{1/2},
\end{align*}
where $u\equiv u_{\e,\d}$.  Specifically, there exists a $t\in [1,5/4]$ such that
\begin{equation}\label{fiftyfive}
\|P_\e(\one_+^\e u_{})\|_{H^1(\dd B(t))}+\d\|u_{}\|_{H^1(\dd B(t))}\leq C\|\lr{}{u_{}}\|_{L^2(B(3))}.
\end{equation}
Let $v$ denote the weak solution to the Dirichlet problem $\mathcal{L}_{0,\d}(v)=0$ in $B(t)$ and $v=P_\e(\one_+^\e u_{})$ on $\dd B(t)$.  Note $v=u=P_\e(\one_+^\e u_{})$ on $\dd B(t)\cap\e\omega$.  By Theorem~\ref{twelve},
\begin{align}
\|\lr{}({u_{}-v})\|_{L^2(B(1))} &\leq C\e^{\mu}\|P_\e(\one_+^\e u_{})\|_{H^1(\dd B(t))} \nonumber\\
&\hspace{15mm}+\d\|\nabla P_\e(\one_+^\e u_{})-\nabla u_{}\|_{L^{2}(B(t))}\label{eightbajillion},
\end{align}
since
\begin{align*}
\|\lr{}\chi_\d^\e\smoothtwo{(\nabla v)\eta_\e}{\e}\|_{L^2(B(t))} &\leq C\|\nabla v\|_{L^2(B(t))} \\
&\leq C\|P_\e(\one_+^\e u)\|_{H^1(\dd B(t))},
\end{align*}
where we've used notation consistent with Theorem~\ref{twelve}.  

By Lemma~\ref{fortysix},
\begin{align}\label{seventysix}
\dashint_{B(t)}|\lr{}\nabla w|^2\leq \dashint_{B(t)}|\nabla P_\e(\one_+^\e u)|^2+C\dashint_{B(2t)}|\lr{}w|^2,
\end{align}
where $w=P_\e(\one_+^\e u)-u$.  Equation~\eqref{seventysix} follows from the fact that $\mathcal{L}_{\e,\d}(w)=\mathcal{L}_{\e,\d}(P_\e(\one_+^\e u))$ in $B(3)$ and $t\in [1,5/4]$.  Note by Lemma~\ref{twentyfive}, $w=0$ a.e. in $B(3)\cap\e\omega$.  Hence, Poincar\'e's inequality gives
\begin{align}\label{seventyeightbajillion}
\dashint_{B(2t)}|\lr{}w|=\d^2\dashint_{B(2t)}\one_-^\e|w|^2 \leq \e^2\dashint_{B(3)}|\lr{}\nabla w|^2.
\end{align}
Indeed, set $W(x)=w(\e x)$, and let $\{H_k\}_{k=1}^{N(\e)}$ denote the bounded, connected components of $\R^d\backslash\omega$ with $\e H_k\cap B(2t)\neq \emptyset$.  Then $W=0$ on $\dd H_k$ for each $k$, and so
\begin{align*}
\dint_{B(2t)}|w|^2 \leq \sum_{k=1}^N\dint_{H_k}|W|^2 \leq C\e^2\sum_{k=1}^N\dint_{\e H_k}|\nabla w|^2\leq C\e^2\dint_{B(3)}|\nabla w|^2,
\end{align*}
where $C$ is independent of $\e$ since $\omega$ is periodic.  Lemma~\ref{twentyfive} together with~\eqref{fiftyfive},~\eqref{eightbajillion} and~\eqref{seventyeightbajillion} give the estimate for $r=1$.

Now we prove the estimate for arbitrary $r>0$.  To this end, let $U(x)=u(rx)$, and note $\mathcal{L}_{\e/r,\d}(U)=0$ in $B(3)$.  By the above, there exists a $V\in H^1(B(1);\R^d)$ satisfying $\mathcal{L}_{0,\d}(V)=0$ in $B(1)$ and
\[
\left(\dashint_{B(1)}|k_{\d}^{\e/r}({U-V})|^2\right) \leq C\left(\dfrac{\e}{r}\right)^{\mu}\left(\dashint_{B(3)}|k_{\d}^{\e/r}{U}|^2\right)^{1/2},
\]
The change of variables $r x\mapsto x$ gives the desired estimate.

\end{proof}

\begin{lemm}\label{fiftytwo}
Suppose $\mathcal{L}_{0,\d}(v)=0$ in $B(2r)$.  Then for $r\geq \e$,
\begin{equation}\label{fiftythree}
\left(\dashint_{B(r)}|v|^2\right)^{1/2}\leq C\left(\dashint_{B(2r)}|\lr{}{v}|^2\right)^{1/2}
\end{equation}
for a constant $C$ depending on $\omega$, $\kappa_1$, $\kappa_2$, and $d$.
\end{lemm}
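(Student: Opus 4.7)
The plan is to combine a cellwise Poincar\'e-type inequality (exploiting $|Q\cap\omega|>0$) with Caccioppoli's estimate for the constant-coefficient operator $\mathcal{L}_{0,\d}$, and then to close the resulting ``up-to-error'' estimate by rescaling to unit scale and a compactness/contradiction argument that invokes unique continuation for constant-coefficient elliptic systems.

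First, for $w\in H^1(Q)$, decomposing $w=w_Q+(w-w_Q)$, controlling the oscillation by Poincar\'e-Wirtinger, and bounding the mean $w_Q$ via its integral over the positive-measure set $Q\cap\omega$ yields the cellwise inequality
\[
\dint_Q|w|^2\leq C\dint_{Q\cap\omega}|w|^2+C\dint_Q|\nabla w|^2.
\]
Rescaling to $\e$-cubes, summing over cubes that cover $B(r+\e\sqrt{d})\subset B(2r)$ (valid when $r\gtrsim\e$), and using $\one_+^\e|v|^2\leq|\lr{}v|^2$ produces
\[
\dint_{B(r)}|v|^2\leq C\dint_{B(2r)}|\lr{}v|^2+C\e^2\dint_{B(r+\e\sqrt{d})}|\nabla v|^2.
\]
Caccioppoli for $\mathcal{L}_{0,\d}$, applicable with constants independent of $\d$ by the uniform ellipticity of $\widehat{A}_\d$ from Lemma~\ref{twentyone}, then gives $\dint_{B(r+\e\sqrt{d})}|\nabla v|^2\leq Cr^{-2}\dint_{B(2r)}|v|^2$, yielding the ``up-to-error'' estimate
\[
\dint_{B(r)}|v|^2\leq C\dint_{B(2r)}|\lr{}v|^2+C\left(\frac{\e}{r}\right)^2\dint_{B(2r)}|v|^2.
\]

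To close this estimate, I would rescale $V(y):=v(ry)$ so that $V$ solves $\mathcal{L}_{0,\d}V=0$ in $B(0,2)$, reducing the claim to the scale-invariant statement $\|V\|_{L^2(B(0,1))}\leq C\|KV\|_{L^2(B(0,2))}$ for the rescaled weight $K(y):=\one_+(y/\eta)+\d\one_-(y/\eta)$, with $C$ uniform over $\eta:=\e/r\in(0,1]$ and $\d\in[0,1]$. The argument would be by contradiction: failure of a uniform $C$ supplies sequences $V_n$, $\d_n$, $\eta_n$ which, after suitable normalization, satisfy $\|V_n\|_{L^2(B(0,2))}$ bounded, $\|K_n V_n\|_{L^2(B(0,2))}\to 0$, and $\|V_n\|_{L^2(B(0,1))}$ bounded below. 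Caccioppoli applied on interior subballs together with Rellich's compactness produce a subsequential $L^2_{\mathrm{loc}}(B(0,2))$ limit $V_*$ solving $\mathcal{L}_{0,\d_*}V_*=0$. Two cases govern the limit of $K_n$: if $\eta_n\to\eta_*>0$, then $K_n\to K_*$ pointwise a.e.\ and $K_* V_*\equiv 0$, forcing $V_*\equiv 0$ by unique continuation for real-analytic solutions of constant-coefficient elliptic systems; if $\eta_n\to 0$, periodic homogenization yields $K_n\rightharpoonup^* |Q\cap\omega|+\d_*|Q\setminus\omega|>0$ in $L^\infty$, and a weak-strong pairing with the strong $L^2_{\mathrm{loc}}$-convergence of $V_n$ again forces $V_*\equiv 0$. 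Either conclusion contradicts the lower bound on $\|V_n\|_{L^2(B(0,1))}$.

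The main obstacle will be controlling $\|V_n\|_{L^2(B(0,2))}$ in the compactness step when $\d_n\to 0$. For $\d_n$ bounded below, the elementary bound $|K_\d V|\geq\d|V|$ immediately gives the needed control; for $\d_n\to 0$ one must combine the smallness of $V_n$ on the positive-density matrix set $B(0,2)\cap\eta_n\omega$ (forced by $\|K_n V_n\|_{L^2(B(0,2))}$ being small) with a quantitative unique continuation (three-ball) estimate for real-analytic solutions of constant-coefficient elliptic systems, using the lower bound $|Q\cap\omega|>0$ in an essential way to propagate the smallness through the inclusions and justify the normalization required for the Rellich step.
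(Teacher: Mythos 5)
The paper's own proof of Lemma~\ref{fiftytwo} is a one-line reduction: it cites \cite{bcr17} for the $\d=0$ case and notes that, since $\one_+^\e|v|^2\leq |\lr{}v|^2$ and $\widehat{A}_\d$ is elliptic uniformly in $\d$ by Lemma~\ref{twentyone}, the case $\d>0$ follows in the same way. Your reduction via $\one_+^\e|v|^2\leq|\lr{}v|^2$ and the cellwise Poincar\'e inequality exploiting $|Q\cap\omega|>0$ are exactly in this spirit, so the first half of the proposal is sound.

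The problem is in closing the estimate. The error term you obtain, $C(\e/r)^2\int_{B(2r)}|v|^2$, sits over the \emph{larger} ball $B(2r)$ and the coefficient is only $O(1)$ when $r$ is comparable to $\e$, so absorption is not available. You then propose a compactness/contradiction argument and correctly flag its own weak spot: after normalizing $\|V_n\|_{L^2(B(1))}=1$ there is no a priori control of $\|V_n\|_{L^2(B(2))}$. This is a genuine obstruction, not a technicality --- solutions of constant-coefficient elliptic systems admit no reverse doubling bound (high-degree homogeneous polynomial solutions make $\|V\|_{L^2(B(2))}/\|V\|_{L^2(B(1))}$ arbitrarily large), so without an exterior bound the Caccioppoli--Rellich step cannot be run, and the suggested three-ball inequality does not rescue it because any quantitative three-sphere estimate for these systems itself requires control of the outer norm, which is exactly what is missing. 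As written, the final paragraph is a plan rather than an argument, so the lemma is not established. Two smaller points worth fixing if you pursue this route: in the $\eta_n\to 0$ branch you must pass to the weak-$*$ limit of $K_n^2$ (not $K_n$), giving $|Q\cap\omega|+\d_*^2|Q\setminus\omega|>0$; the compactness step also tacitly uses continuity of $\d\mapsto\widehat{A}_\d$, which the paper only proves as $\d\to 0$; and the cube covering needs $\e\sqrt d\leq r$, so the range $\e\leq r<\sqrt d\,\e$ requires a separate (though easy) treatment.
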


\begin{proof}
See~\cite{bcr17} for a proof when $\d=0$.  The case $\d>0$ follows similary given Lemma~\ref{twentyone}.
\end{proof}

For $w\in L_{\text{loc}}^2(B(r);\R^d)$, $\d\geq 0$, and $\e,r>0$, set
\begin{equation}\label{fortyfive}
H_{\e,\d}(r;w)=\dfrac{1}{r}\underset{\substack{M\in\R^{d\times d} \\ q\in\R^d}}{\inf}\left(\dashint_{B(r)}|\lr{}(w-Mx-q)|^2\right)^{1/2}.
\end{equation}

\begin{lemm}\label{fiftyfour}
Suppose $v$ satisfies $\mathcal{L}_{0,\d}(v)=0$ in $B(1)$.  For any $r\in [\e,1]$ and $\theta\in (0,1/4)$,
\[
H_{\e,\d}(\theta r;v)\leq C\theta H_{\e,\d}(r;v)
\]
for some constant $C$ depending on $d$, $\kappa_1$, $\kappa_2$, and $\omega$.
\end{lemm}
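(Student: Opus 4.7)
The plan is to prove this classical improvement-of-flatness estimate via a Campanato-style step that exploits two facts: (a) $v$ is smooth because $\mathcal{L}_{0,\d}$ has constant coefficients with ellipticity uniform in $\d$ (Lemma~\ref{twentyone}), so the standard interior $C^2$-estimate is at our disposal; and (b) Lemma~\ref{fiftytwo} lets us replace the unweighted $L^2$-norm of a homogenized solution by its weighted counterpart at scales $\geq \e$.

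First I would note that $H_{\e,\d}(\rho;\cdot)$ is invariant under subtraction of affine functions, so if $(M^*,q^*)$ achieves the infimum defining $H_{\e,\d}(r;v)$ and we set $w(x):=v(x)-M^*x-q^*$, then $\mathcal{L}_{0,\d}(w)=0$ on $B(1)$, $H_{\e,\d}(\rho;v)=H_{\e,\d}(\rho;w)$ for every $\rho\in(0,1]$, and
\[
r\,H_{\e,\d}(r;v) \;=\; \left(\dashint_{B(r)}|\lr{}\,w|^2\right)^{1/2}.
\]
To control $H_{\e,\d}(\theta r;w)$ I would test the infimum against the Taylor pair $M=\nabla w(0)$, $q=w(0)$: the pointwise Taylor remainder estimate together with $|\lr{}|\leq 1$ yields
\[
H_{\e,\d}(\theta r;v)\;\leq\; C\,\theta r\,\|\nabla^2 w\|_{L^\infty(B(\theta r))}.
\]
Since $\theta<1/4$, the ball $B(\theta r)$ sits well inside $B(r/4)$, so the standard interior $C^2$-estimate for the constant-coefficient system $\mathcal{L}_{0,\d}$ gives
\[
\|\nabla^2 w\|_{L^\infty(B(\theta r))}\;\leq\; \frac{C}{r^2}\left(\dashint_{B(r/2)}|w|^2\right)^{1/2}.
\]
Finally, Lemma~\ref{fiftytwo} applied to $w$ at the inner radius $r/2$ converts this unweighted norm into $C\,r\,H_{\e,\d}(r;v)$, and concatenating the three displays produces the desired inequality $H_{\e,\d}(\theta r;v)\leq C\theta\,H_{\e,\d}(r;v)$.

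The main obstacle is the scale compatibility in the last step: Lemma~\ref{fiftytwo} requires its inner radius to be at least $\e$, which holds cleanly only when $r\geq 2\e$. For the narrow remaining band $r\in[\e,2\e)$ the ratio $r/\e$ is bounded above, so one can either invoke Lemma~\ref{fiftytwo} at a comparable radius close to $r/2$ or use a Cacciopoli-type bound (Lemma~\ref{fortysix}) to close the gap, absorbing the bounded loss into the constant $C=C(\omega,d,\kappa_1,\kappa_2)$; the $\theta$ factor on the right-hand side is preserved. The fact that the constant-coefficient $C^2$-regularity constants are uniform in $\d$ is essential here, and holds precisely because $\widehat{A}_\d$ is uniformly elliptic uniformly in $\d$ by Lemma~\ref{twentyone}.
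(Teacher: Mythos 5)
Your proof is correct and follows essentially the same route as the paper's: both arguments reduce from the weighted to the unweighted quantity via $\lr{}\leq 1$, apply the interior $C^2$ estimate for the constant-coefficient system (whose constants are uniform in $\d$ by Lemma~\ref{twentyone}) to get the $\theta$ gain on the unweighted flatness, and then use Lemma~\ref{fiftytwo} to return to the weighted quantity at scale $r$. You unpack the $C^2$ step with the explicit Taylor pair and you flag the scale-compatibility wrinkle (Lemma~\ref{fiftytwo} applied with inner radius $r/2$ strictly needs $r/2\geq\e$, while the lemma is stated for $r\in[\e,1]$) that the paper's three-line proof silently elides; your proposed remedy for the band $r\in[\e,2\e)$ is the right fix.
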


\begin{proof}
It follows from interior $C^2$-estimates for elasticity systems with constant coefficients that for any $\theta\in (0,1/4)$,
\[
H_{\e,\d}(\theta r; v)\leq H_{\e,1}(\theta r;v)\leq C_1\theta H_{\e,1}(r/2;v),
\]
where $C_1$ a constant depending on $d$, $\kappa_1$, $\kappa_2$.  By Lemma~\ref{fiftytwo}, we have the desired estimate.
\end{proof}

\begin{lemm}\label{sixtyfive}
Suppose $\mathcal{L}_{\e,\d}(u_{\e,\d})=0$ in $B(1)$.  For any $\e\leq r\leq 1/3$,
\[
H_{\e,\d}(\theta r;u)\leq C_1\theta H_{\e,\d}(r;u)+\dfrac{C_2}{r}\left(\dfrac{\e}{r}\right)^{\mu}\underset{q\in\R^d}{\inf}\left(\dashint_{B(r)}|\lr{}(u-q)|^2\right)^{1/2}
\]
where $u\equiv u_{\e,\d}$, $\theta\in (0,1/4)$, and $\mu>0$.
\end{lemm}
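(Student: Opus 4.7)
The plan is the Campanato-type iteration outlined in the introduction: approximate $u\equiv u_{\e,\d}$ on a ball by a solution of the homogenized constant-coefficient system $\mathcal{L}_{0,\d}$, exploit the flatness improvement available for such solutions via Lemma~\ref{fiftyfour}, and absorb the resulting defect using the quantitative $L^2$-approximation from Lemma~\ref{fiftyone}. Since constants lie in the kernel of $\mathcal{L}_{\e,\d}$, for any $q\in\R^d$ the translate $u-q$ also satisfies $\mathcal{L}_{\e,\d}(u-q)=0$ in $B(1)$, hence in $B(r)$. Applying Lemma~\ref{fiftyone} to $u-q$ at the rescaled radius $r/3$ (permitted because $r\le 1/3$) yields $v_q\in H^1(B(r/3);\R^d)$ with $\mathcal{L}_{0,\d}(v_q)=0$ in $B(r/3)$ and
\[
\left(\dashint_{B(r/3)}|\lr{}(u-q-v_q)|^2\right)^{1/2}\le C\left(\frac{\e}{r}\right)^{\mu}\left(\dashint_{B(r)}|\lr{}(u-q)|^2\right)^{1/2}.
\]
Setting $w_q:=v_q+q$, one still has $\mathcal{L}_{0,\d}(w_q)=0$ in $B(r/3)$, and the same estimate holds with $u-w_q$ replacing $u-q-v_q$.

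Next, the triangle inequality inside the infimum defining $H_{\e,\d}$ in~\eqref{fortyfive} gives, for any affine polynomial $P$,
\[
\left(\dashint_{B(\theta r)}|\lr{}(u-P)|^2\right)^{1/2}\le\left(\dashint_{B(\theta r)}|\lr{}(u-w_q)|^2\right)^{1/2}+\left(\dashint_{B(\theta r)}|\lr{}(w_q-P)|^2\right)^{1/2}.
\]
Dividing by $\theta r$ and minimizing over $P$ yields
\[
H_{\e,\d}(\theta r;u)\le\frac{1}{\theta r}\left(\dashint_{B(\theta r)}|\lr{}(u-w_q)|^2\right)^{1/2}+H_{\e,\d}(\theta r;w_q).
\]
Because $\theta<1/4<1/3$ implies $B(\theta r)\subset B(r/3)$, a comparison-of-averages argument combined with the approximation above controls the first term on the right by $C\theta^{-(d/2+1)}r^{-1}(\e/r)^{\mu}(\dashint_{B(r)}|\lr{}(u-q)|^2)^{1/2}$.

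For the second term, Lemma~\ref{fiftyfour} rescaled from $B(1)$ to $B(r/3)$ and applied to $w_q$ (using $\mathcal{L}_{0,\d}(w_q)=0$ on $B(r/3)$ and $r\ge\e$) provides an improvement of the form $H_{\e,\d}(\theta r;w_q)\le C_1\theta\,H_{\e,\d}(\sigma;w_q)$ at an intermediate scale $\sigma\le r/3$. A further triangle inequality gives $H_{\e,\d}(\sigma;w_q)\le H_{\e,\d}(\sigma;u)+\sigma^{-1}(\dashint_{B(\sigma)}|\lr{}(u-w_q)|^2)^{1/2}$, and a routine comparison $H_{\e,\d}(\sigma;u)\le C\,H_{\e,\d}(r;u)$ (obtained by testing with the affine minimizer of $H_{\e,\d}(r;u)$ and using $\sigma\le r$) absorbs the first piece into $C_1\theta\,H_{\e,\d}(r;u)$, while the second joins the error term. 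Taking the infimum over $q\in\R^d$ then concludes the argument, with $C_2$ absorbing all $\theta$-dependent geometric constants.

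The principal technical hurdle is coordinating the three relevant radii so that the error on the right displays $\dashint_{B(r)}$ rather than some larger ball: applying Lemma~\ref{fiftyone} at scale $r/3$ (instead of directly at scale $r$) is what forces the $B(r)$-average to appear, while the hypothesis $\theta\in(0,1/4)$ is just strong enough to guarantee both that $B(\theta r)\subset B(r/3)$ (for the comparison-of-averages step) and that the admissible $\theta$-range in Lemma~\ref{fiftyfour} is respected after rescaling to balls of radius $r/3$.
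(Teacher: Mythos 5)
Your overall plan is the same as the paper's: approximate $u$ on a smaller ball by a solution $v$ of the homogenized constant-coefficient system (Lemma~\ref{fiftyone}), split $H_{\e,\d}(\theta r;u)$ by the triangle inequality, use the flatness improvement of Lemma~\ref{fiftyfour} on $v$, and absorb the remainder using the quantitative approximation. The concrete difference is the scale at which Lemma~\ref{fiftyone} is invoked. The paper applies it at radius $r$, so $v$ solves $\mathcal{L}_{0,\d}(v)=0$ on all of $B(r)$, which makes the (rescaled) Lemma~\ref{fiftyfour} available for the full range $\theta\in(0,1/4)$; the price is that the resulting error term is an average over $B(3r)$, not $B(r)$ as written in the statement (the proof of Theorem~\ref{five} in fact uses the $B(3r)$ version, so the $B(r)$ in the displayed statement appears to be a typo). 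You instead apply Lemma~\ref{fiftyone} at radius $r/3$ to $u-q$, which is what produces the literal $\dashint_{B(r)}$ error, and that part is fine.

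The snag is the step $H_{\e,\d}(\theta r;w_q)\le C_1\theta\,H_{\e,\d}(\sigma;w_q)$. You say this comes from ``Lemma~\ref{fiftyfour} rescaled from $B(1)$ to $B(r/3)$,'' but $w_q$ only solves the homogenized system on $B(r/3)$, so the rescaled lemma yields $H_{\e,\d}(\theta'\rho;w_q)\le C_1\theta'H_{\e,\d}(\rho;w_q)$ only for $\rho\le r/3$ and $\theta'\in(0,1/4)$. To hit the target radius $\theta r=\theta'\rho$ one needs $\theta\le\theta'/3<1/12$, so for $\theta\in[1/12,1/4)$ the rescaled lemma is simply out of range, and the ``intermediate scale $\sigma\le r/3$'' language does not repair this. (It is repairable: since $\theta r<r/4<r/3$, one can run the interior $C^2$ estimate for $w_q$ directly on $B(r/3)$ rather than quoting the rescaled lemma, but you would need to say that explicitly.) Two smaller points in the same step: the ``routine comparison'' $H_{\e,\d}(\sigma;u)\le C\,H_{\e,\d}(r;u)$ carries a constant $\sim(r/\sigma)^{d/2+1}$, so you should pin $\sigma$ down to a fixed fraction of $r$ rather than merely ``$\le r/3$''; and the resulting $C_1$ absorbs the $\theta$-dependent factor $\theta^{-(d/2+1)}$ from enlarging $B(\theta r)$ to $B(r/3)$, which is harmless here but worth noting since the lemma's constants are meant to be independent of $r$ and $\e$, not of $\theta$.
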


\begin{proof}
Fix $r\geq \e$, and let $v\equiv v_r$ denote the function given by Lemma~\ref{fiftyone}.  We have
\begin{align*}
H_{\e,\d}(\theta r;u) &\leq \dfrac{1}{\theta r}\left(\dashint_{B(\theta r)}|\lr{}(u_{}-v)|^2\right)^{1/2}+H_{\e,\d}(\theta r;v) \\
&\leq\dfrac{C}{r}\left(\dashint_{B(r)}|\lr{}(u-v)|^2\right)^{1/2}+C_1\theta H_{\e,\d}(r;v) \\
&\leq\dfrac{C}{r}\left(\dashint_{B(r)}|\lr{}(u-v)|^2\right)^{1/2}+C_1\theta H_{\e,\d}(r;u),
\end{align*}
where we've used Lemma~\ref{fiftyfour}.  By Lemma~\ref{fiftyone},
\begin{equation}\label{fiftysix}
H_{\e,\d}(\theta r; u)\leq \dfrac{C_2}{r}\left(\dfrac{\e}{r}\right)^{\mu}\left(\dashint_{B(3r)}|\lr{}u_{}|^2\right)^{1/2}+C_1\theta H_{\e,\d}(r;u).
\end{equation}
Since~\eqref{fiftysix} remains invariant if we subtract a constant from $u$, the desired estimate follows.
\end{proof}

\begin{lemm}\label{sixtythree}
Let $H(r)$ and $h(r)$ be two nonnegative continous functions on the interval $(0,1]$.  Let $0<\e<1/6$.  Suppose that there exists a constant $C_0$ with
\begin{equation}\label{fiftyseven}
\begin{cases}
\underset{r\leq t\leq 3r}{\max} H(t)\leq C_0 H(3r), \\
\underset{r\leq t,s\leq 3r}{\max} |h(t)-h(s)|\leq C_0H(3r), \\
\end{cases}
\end{equation}
for any $r\in [\e,1/3]$.  We further assume
\begin{equation}\label{fiftyeight}
H(\theta r)\leq \dfrac{1}{2}H(r)+C_0\left(\dfrac{\e}{r}\right)^{\mu}\left\{H(3r)+h(3r)\right\}
\end{equation}
for any $r\in [\e,1/3]$ and some $\mu>0$, where $\theta\in (0,1/4)$.  Then 
\[
\underset{\e\leq r\leq 1}{\max}\left\{H(r)+h(r)\right\}\leq C\{H(1)+h(1)\},
\]
where $C$ depends on $C_0$ and $\theta$.
\end{lemm}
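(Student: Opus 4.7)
The plan is to iterate hypothesis~\eqref{fiftyeight} along the geometric sequence of scales $r_k=\theta^k$ down to the scale $\e$, telescoping with~\eqref{fiftyseven}. Write $G_k=H(r_k)+h(r_k)$; the aim is to show $\sup_k G_k\leq C(H(1)+h(1))$. First, using only~\eqref{fiftyseven}, I establish a quasi-continuity statement: $H(t)\leq C H(s)$ and $|h(t)-h(s)|\leq C H(s)$ whenever $t,s\in[\e,1]$ have ratio $t/s$ bounded by a constant depending on $\theta$. This follows by chaining~\eqref{fiftyseven} through a finite number of intermediate scales, where the number of links depends on $\theta$ but not on $k$. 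Because $\theta<1/4<1/3$ forces $3r_k\leq r_{k-1}$, this in particular gives $H(3r_k)+h(3r_k)\leq C\,G_{k-1}$.

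Substituting into~\eqref{fiftyeight} produces the key recursion
\[
H(r_{k+1})\leq \tfrac{1}{2}H(r_k)+C\Bigl(\tfrac{\e}{r_k}\Bigr)^{\mu}G_{k-1}.
\]
Iterating in $k$ and summing the geometric factor $2^{-k}$,
\[
H(r_k)\leq 2^{-k}H(1)+C\Bigl(\tfrac{\e}{r_{k-1}}\Bigr)^{\mu}\sup_{j<k}G_{j},
\]
the crucial observation being that $\sum_{j\geq 0}(\e/r_j)^{\mu}$ is a geometric series with ratio $\theta^{-\mu}>1$, dominated by its largest term (occurring at the smallest admissible scale $r_j\sim\e$) and hence uniformly bounded by a constant depending only on $\theta$ and $\mu$. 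Summing over $k$ then yields $\sum_{k}H(r_k)\leq C\bigl(H(1)+\sup_{j}G_j\bigr)$.

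Next, the quasi-continuity estimate on $h$ from the first step telescopes to $h(r_k)\leq h(1)+C\sum_{j<k}H(r_j)$. Combining with the above,
\[
G_k\leq C_1\bigl(H(1)+h(1)\bigr)+C_2\sup_{0\leq j\leq k-1}G_{j}
\]
for constants $C_1,C_2$ depending on $\theta,\mu,C_0$. Taking the supremum over $k$ and absorbing $C_2\sup_j G_j$ to the left-hand side yields the claim along the discrete sequence, provided $C_2<1$; the bound then transfers to arbitrary $r\in[\e,1]$ by the quasi-continuity from the first step.

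The principal obstacle is verifying that the absorption constant $C_2$ is strictly less than $1$; this is where the structure of the iteration is used critically. Intuitively, the $\tfrac{1}{2}$-factor in~\eqref{fiftyeight} supplies a genuine geometric contraction for $H$ that makes $\sum_k H(r_k)$ summable, while the sum $\sum_k(\e/r_k)^{\mu}$ of error weights is uniformly bounded. Together these ensure that the accumulated error from telescoping $h$ does not overwhelm the bootstrap, but the careful bookkeeping of constants through the chained applications of~\eqref{fiftyseven} in Steps~1--2 is what ultimately renders the absorption feasible.
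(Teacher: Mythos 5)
The paper does not supply its own proof of this lemma; it cites it as \cite[Lemma~8.5]{shen}, so there is no in-text argument to compare yours against. That said, your proposal contains a genuine gap, and you have correctly located it yourself: the final absorption step, which requires $C_2<1$. Unwinding your Steps 2--3, the coefficient $C_2$ multiplying $\sup_{j<k}G_j$ is, roughly, $C\,a_{k-1} + C^2 B$, where $a_{k-1}=(\e/r_{k-1})^{\mu}$ can be as large as $1$ (when $r_{k-1}$ is comparable to $\e$), $B=\sum_j(\e/r_j)^{\mu}\sim(1-\theta^{\mu})^{-1}\geq 1$ is the bounded-but-not-small geometric sum you identified, and $C\geq 1$ is the quasi-continuity constant obtained by chaining~\eqref{fiftyseven} roughly $\log_3(1/\theta)$ times. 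None of these factors can be made smaller than $1$ for a general $\theta\in(0,1/4)$, $\mu>0$, $C_0$, so the inequality $\sup_k G_k\leq C_1(H(1)+h(1))+C_2\sup_k G_k$ simply does not close. The concluding heuristic --- that summability of $H(r_k)$ and boundedness of $\sum(\e/r_k)^{\mu}$ ``ensure that the accumulated error does not overwhelm the bootstrap'' --- does not substitute for this: boundedness of those sums gives a finite $C_2$, not $C_2<1$.

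The right mechanism for closing the loop is a discrete Gr\"onwall-type argument rather than linear absorption. Concretely, set $x_j=H(r_j)$, $T_j=\sum_{i\leq j}x_i$, and $b_j=(\e/r_j)^{\mu}$; your recursion and telescoping for $h$ combine to give $T_{j+1}\leq P + Q\sum_{i\leq j}b_i\,T_{i-1}$ with $P\lesssim H(1)+h(1)$ and a fixed constant $Q$. Since $T_j$ is nondecreasing, iterating yields $T_j\leq P\prod_{i<j}(1+Qb_i)\leq P\,e^{Q\sum_i b_i}$, and $\sum_i b_i$ is uniformly bounded (this is exactly your geometric-series observation, and it is the discrete analogue of the Dini condition $\int_0^1\beta(t)\,dt/t<\infty$ appearing in Shen's formulation). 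This bounds $\sum_j H(r_j)$, hence $h(r_j)$ via the telescoping, hence $\sup_j G_j$, without ever requiring a coefficient to be less than one. Your preparatory steps --- the chained quasi-continuity (which, note, only gives $H$ at a smaller comparable scale controlled by $H$ at a larger one, not the reverse, though that is the direction you actually use), the key recursion $H(r_{k+1})\leq\tfrac12 H(r_k)+Ca_k G_{k-1}$, the telescoping of $h$ --- are all sound and are exactly the ingredients Gr\"onwall needs; it is only the final bootstrap that must be replaced.
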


\begin{proof}
See~\cite[Lemma 8.5]{shen}.
\end{proof}

\begin{proof}[Proof of Theorem~\ref{five}]
By rescaling, we may assume $R=1$.  We assume $\e\in(0,1/6)$, and we let $H(r)\equiv H_{\e,\d}(r;u)$, where $u\equiv u_{\e,\d}$ and $H_{\e,\d}(r;u)$ is defined above by~\eqref{fortyfive}.  Let $h(r)=r^{-1}|M_r|$, where $M_r\in\R^{d\times d}$ satisfies
\[
H(r)=\dfrac{1}{r}\underset{q\in\R^d}{\inf}\left(\dashint_{B(r)}|\lr{}(u-M_rx-q)|^2\right)^{1/2}.
\]
Note there exists a constant $C$ independent of $r$ so that
\begin{equation}\label{sixtyone}
H(t)\leq C H(3r),\,\,\,t\in [r,3r].
\end{equation}
Suppose $s,t\in [r,3r]$.  We have
\begin{align}
|h(t)-h(s)| &\leq \dfrac{C}{r}\underset{q\in\R^d}{\inf}\left(\dashint_{B(r)}\lr{}|(M_t-M_s)x-q|^2\right)^{1/2} \nonumber\\
&\leq \dfrac{C}{t}\underset{q\in\R^d}{\inf}\left(\dashint_{B(t)}\lr{}|u-M_tx-q|^2\right)^{1/2} \nonumber\\
&\hspace{10mm}+\dfrac{C}{s}\underset{q\in\R^d}{\inf}\left(\dashint_{B(s)}\lr{}|u-M_sx-q|^2\right)^{1/2} \nonumber\\
&\leq C H(3r),\nonumber
\end{align}
where we've used~\eqref{sixtyone} for the last inequality.  Specifically,
\begin{equation}\label{sixtytwo}
\underset{r\leq t,s\leq 3r}{\max}|h(t)-h(s)|\leq CH(3r).
\end{equation}
Clearly
\[
\dfrac{1}{r}\underset{q\in\R^d}{\inf}\left(\dashint_{B(3r)}|\lr{}(u-q)|^2\right)^{1/2}\leq H(3r)+h(3r),
\]
and so Lemma~\ref{sixtyfive} implies
\begin{equation}\label{sixtyfour}
H(\theta r)\leq \dfrac{1}{2}H(r)+C\left(\dfrac{\e}{r}\right)^{\mu}\left\{H(3r)+h(3r)\right\}
\end{equation}
for any $r\in [\e,1/3]$ and some $\theta\in (0,1/4)$.  Note equations~\eqref{sixtyone},~\eqref{sixtytwo}, and~\eqref{sixtyfour} show that $H(r)$ and $h(r)$ satisfy the assumptions of Lemma~\ref{sixtythree}.  Consequently, 
\begin{align}
\left(\dashint_{B(r)}|\lr{}\nabla u|^2\right)^{1/2}&\leq \dfrac{C}{r}\underset{q\in\R^d}{\inf}\left(\dashint_{B(3r)}|\lr{}(u-q)|^2\right)^{1/2} \nonumber\\
&\leq C\left\{H(3r)+h(3r)\right\} \nonumber\\
&\leq C\left\{H(1)+h(1)\right\} \nonumber\\
&\leq C\left(\dashint_{B(1)}|\lr{}u|^2\right)^{1/2}.\label{sixtysix}
\end{align}
Since~\eqref{sixtysix} remains invariant if we subtract a constant from $u$, the desired estimate in Theorem~\ref{five} follows from Poincar\'{e}'s inequality.
\end{proof}

%
%
%
%
%
%
%
\appendix
\section{Interior estimates at the small-scale}\label{section5}
%
%
%
%
%
%

In this appendix, we discuss combining the large-scale estimate Theorem~\ref{five} with $C^{1,\a}$ estimates for interface problems to derive interior estimates at both the macroscopic and microscopic scale.  In particular, we show Corollary~\ref{threethousand}.  First, we prove the following lemma.  

\begin{lemm}\label{seven}
Suppose $A$ satisfies~\eqref{one},~\eqref{two},~\eqref{four}, and is $\a$-H\"older continuous for some $\a\in (0,1)$, i.e., $A$ satisfies~\eqref{six}.  Suppose $\omega$ is an unbounded $C^{1,\a}$ domain.  Let $u_{1,\d}$ denote a weak solution to $\mathcal{L}_{1,\d}(u_\d)=0$ in $B(x_0,1)$ for some $x_0\in\R^d$.  Then
\[
\|\nabla u_\d\|_{C^{0,\a}(B(x_0,r)\cap\e\omega)}+\d\|\nabla u_\d\|_{C^{0,\a}(B(x_0,r)\backslash\e\omega)}\leq C\|\lrnoe{}\nabla u_\d\|_{L^2(B(x_0,1))},
\]
for a constant $C$ independent of $\d$ and $0<r\leq 1/3$.  In particular,
\[
\|\lrnoe{}\nabla u_\d\|_{L^\infty(B(x_0,r))}\leq C\|\lrnoe{}\nabla u_\d\|_{L^2(B(x_0,1))}
\]
for $0<r\leq 1/3$.
\end{lemm}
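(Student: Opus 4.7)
The plan is to derive local $C^{1,\a}$ estimates for the transmission problem $\mathcal{L}_{1,\d}(u_\d)=0$ by combining classical Schauder theory away from $\dd\omega$ with a layer-potential analysis across the interface $\dd\omega$, following the scheme of Escauriaza, Fabes, and Verchota as adapted to elasticity-type systems by Yeh. The weak formulation encodes continuity of $u_\d$ across $\dd\omega$ together with the conormal flux jump $n\cdot A\nabla u_\d|_+=\d\, n\cdot A\nabla u_\d|_-$, where $+$ and $-$ denote the traces from inside $\omega$ (the substrate) and from $\R^d\setminus\omega$ (the inclusion). The weight $\lrnoe{}$ on the right-hand side of the asserted estimate is dictated by this jump: gradients from the inclusion side enter with an extra factor $\d$.

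I would then proceed in three steps. First, use a partition of unity and a $C^{1,\a}$ straightening of $\dd\omega$ to reduce to two local model problems: balls contained strictly in $\omega$ or strictly in $\R^d\setminus\omega$, handled by classical interior Schauder estimates for $-\text{div}(A\nabla \cdot)$ with $\a$-H\"older coefficients; and balls straddling a flat interface carrying a $C^{0,\a}$ transmission condition. Second, for the flat-interface model, represent $u_\d$ by a single-layer potential built from the constant-coefficient principal part of $A$ frozen at an interface point. The transmission condition becomes a boundary integral equation for the layer density, whose solvability follows from a Rellich-type identity, and the resulting representation gives $L^2$ nontangential maximal function estimates on each side. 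Third, pass from $L^2$ nontangential control to pointwise $C^{0,\a}$ control on each side via the H\"older continuity of $A$, a Campanato iteration, and a standard freezing-of-coefficients argument; this mirrors \cite{fabes,yeh10,yeh16} with the modifications for elasticity systems.

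The main obstacle is keeping all constants uniform in $\d\in(0,1]$. The Rellich identity used to invert the transmission operator involves integration by parts on each side of the interface, and the resulting boundary terms must be combined so that the $\d$-weight on the inclusion side precisely cancels against the factor $\d$ appearing on the left of the stated bound. Once this uniformity is verified, the one-sided H\"older estimates combine into the asserted two-sided inequality, and the $L^\infty$ bound follows from
\[
\|\lrnoe{}\nabla u_\d\|_{L^\infty(B(x_0,r))}\leq \|\nabla u_\d\|_{L^\infty(B(x_0,r)\cap\omega)}+\d\|\nabla u_\d\|_{L^\infty(B(x_0,r)\setminus\omega)}
\]
by embedding the $C^{0,\a}$ norms into $L^\infty$.
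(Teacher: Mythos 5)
You are proposing the same Escauriaza--Fabes--Verchota/Yeh layer-potential framework the paper cites, and the overall shape (classical Schauder away from $\dd\omega$, a boundary-integral treatment of the transmission condition across $\dd\omega$) is the right one. But the execution differs from the paper's in ways that matter, and the step you flag as the ``main obstacle'' --- uniformity in $\d$ --- is precisely where the proposal stops short.

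The paper's treatment of the interface has two structural features you omit. First, it splits $[0,1]$ into two $\d$-regimes: for $\d_0\leq\d\leq 1$ the coefficients $\lrnoe{}A$ are uniformly elliptic with bounds independent of $\d$, so classical Schauder estimates for $\a$-H\"older coefficients in $C^{1,\a}$ domains already give the result, with no transmission analysis at all. Only for $0\leq\d<\d_0$ does the paper run a layer-potential argument. Second, on each inclusion neighborhood $H_k^*$ the paper writes $u=u_1+u_2$, where $u_1$ solves decoupled Dirichlet problems on $H_k$ and $H_k^*\setminus H_k$ (so $u_1$ absorbs the inhomogeneity and is handled by ordinary Schauder theory), while $u_2$ solves a homogeneous transmission problem with an explicit conormal jump across $\dd H_k$. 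Using variable-coefficient double-layer potentials built from the fundamental solution of $\mathcal{L}_{1,1}$ (rather than freezing coefficients and flattening the interface, as you propose) yields a boundary integral equation of the form
\[
\Bigl[1-2\Bigl(\tfrac{1-\d^2}{1+\d^2}\Bigr)\mathcal{D}\Bigr]u = \text{data on }\dd H_k,
\]
and the crux is that for small $\d$ this is a small perturbation of $1-2\mathcal{D}$, whose invertibility on $C^{1,\a}(\dd H_k;\R^d)$ is the content of Lemma~\ref{eightytwo}. The threshold $\d_0$ is then chosen so the perturbation has operator norm below $1$, giving an inverse with a $\d$-independent bound.

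Your route instead appeals to a Rellich identity for $L^2$ invertibility, nontangential maximal function control, and a Campanato bootstrap to $C^{0,\a}$. That is plausible (it is the EFV machinery), but you leave the $\d$-uniformity to an asserted ``cancellation'' in the Rellich boundary terms, with no argument. As $\d\to 0$ the transmission parameter in the associated boundary operator $\lambda I-\K$ approaches the spectral threshold $\lambda=\tfrac12$ and the Rellich bilinear form degenerates on the inclusion side; making the $L^2$ estimates uniform there is genuine work, and you would then also need to propagate uniformity through the freezing-of-coefficients and Campanato steps. The paper sidesteps all of this with the two-regime split and the perturbation off $\d=0$ carried out directly in H\"older spaces. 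As written, your proposal identifies the right toolkit but leaves the central claim of the lemma --- that the constant does not blow up as $\d\to 0$ --- unestablished.
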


Lemma~\ref{seven} was proved for scalar equations with diagonal coeffcients in smooth domains in~\cite{fabes,yeh10,yeh16}.  Lemma~\ref{seven} continues to hold for elliptic systems with coefficients and domains satisfying the given assumptions.  Together, Lemma~\ref{seven} and Theorem~\ref{five} give interior Lipschitz estimates for $\mathcal{L}_{\e,\d}$ at every scale.

Let $\G(\cdot,x)$ denote the matrix-valued fundamental solution associated with $\mathcal{L}_{1,1}$ in $\R^d$.  That is, $\Gamma(\cdot,x)=\{\Gamma^{\a\b}(\cdot,x)\}_{1\leq\a,\b\leq d}$ satisfies
\[
f^\b(x)=\dint_{\R^d}a_{ij}^{\a\g}(\x)\dfrac{\dd\Gamma^{\g\b}}{\dd x_j}(\x,x)\dfrac{\dd f^\a}{\dd x_i}(\x)d\sigma(\x)
\]
for $f=\{f^\b\}_{1\leq\b\leq d}\in C_0^\infty (\R^d;\R^d)$.  Indeed, if $A$ is VMO, i.e., 
\begin{equation}\label{sixtyseven}
\underset{\substack{x\in\R^d \\ 0<r<R}}{\sup}\dashint_{B(x,r)}\left|A(y)-\dashint_{B(x,r)}A\right|\,dy\to 0\,\,\,\text{ as }R\to 0^+
\end{equation}
then $\G(\cdot,x)\in W^{1,1}_{\text{loc}}(\R^d\backslash\{x\};\R^{d\times d})$ exists uniquely for each $x\in\R^d$ (see work of Hofmann and Kim~\cite{hofmann} for $d\geq 3$ and work of Brown, Kim, and Taylor~\cite{brownie} for $d=2$).  If $A$ satisfies~\eqref{six}, then $A$ satisfies~\eqref{sixtyseven}.  

For a bounded, simply-connected domain $H$ and $g\in L^2(\dd H;\R^d)$, the single-layer potential $\S g=\{(\S g)^\a\}_{1\leq \a\leq d}$ is given by
\begin{align}
(\S g)^\a(x)=\dint_{\dd H} \Gamma^{\a\b}(x,\x)g^\b(\x)\,d\sigma(\x),\,\,\,x\in\R^d\backslash\dd H \label{seventyseven}
\end{align}
and the double-layer potential $\D g=\{(\D g)^\a\}_{1\leq \a\leq d}$ is given by
\begin{equation}\label{seventyeight}
(\D g)^\a(x)=\dint_{\dd H}n_i(\x)a_{ij}^{\a\b}(\x)\dfrac{\dd\Gamma^{\b\g}}{\dd x_j}(\x,x)g^\g(\x)\,d\sigma(\x),\,\,\,x\in\R^d\backslash\dd H
\end{equation}
where $n(\x)=\{n_i(\x)\}_{1\leq i\leq d}$ denotes the unit vector outward normal to $H$ at $\x\in\dd H$.

It is known (see~\cite[Theorem 4.6]{potential}) that if $g\in L^2(\dd H;\R^d)$, then
\begin{equation}\label{seventynine}
\D g^{\pm}=\pm\dfrac{1}{2}g+\mathcal{K}g\,\,\,\text{ on }\dd H,
\end{equation}
where $\K$ is given by
\[
\K g(x)=\text{p.v.}\dint_{\dd H}n_i(\x)a_{ij}^{\a\b}(\x)\dfrac{\dd \Gamma^{\b\g}}{\dd x_j}(\x,x)g^\g(\x)\,d\sigma(\x),\,\,\,x\in\dd H
\]
and
\[
\D g^{\pm}(x)=\lim_{h\to 0^+}\D g(x\pm hn),
\]
i.e., $\D g^+$ and $\D g^-$ denote the traces of $\D g$ on $\dd H$ from the exterior of $\overline{H}$ and the interior of $H$, respectively.  In particular, $w=\D g$ satisfies $\mathcal{L}_{1,1}(w)=0$ in $H$ and $w=(-\frac{1}{2}+\K)g$ on $\dd H$.  It is also known (see~\cite[Lemma 5.7]{potential}) that if $H$ is Lipschitz and $A$ satisfies~\eqref{one},~\eqref{two}, and~\eqref{six}, then
\begin{equation}\label{eighty}
-\dfrac{1}{2}+\mathcal{K}:L^2(\dd H;\R^d)\to L^2(\dd H;\R^d)
\end{equation}
is bounded and continuously invertible.  For single equations, this follows from the compactness of $\mathcal{K}$ and Fredholm theory (see the argument of Yeh in~\cite[Lemma 3.2]{yeh10}).  For systems with variable coefficients, the operator $\mathcal{K}$ is not compact (see the work of Kenig and Shen~\cite{potential} for an alternative proof of invertibility on $L^2$).  The following lemma, however, is more or less known.

\begin{lemm}\label{eightyone}
Suppose $A$ satisfies~\eqref{one},~\eqref{two},~\eqref{four} and is $\a$-H\"older continous, i.e., satisfies~\eqref{six}, for some $\a\in (0,1)$.  Suppose $H$ is a bounded $C^{1,\a}$ domain.  The operators
\[
\mathcal{S}:C^{0,\a}(\dd H;\R^d)\mapsto C^{1,\a}(\dd H;\R^d)
\]
and
\[
\D:C^{1,\a}(\dd H;\R^d)\mapsto C^{1,\a}(\dd H;\R^d)
\]
defined by~\eqref{seventyseven} and~\eqref{seventyeight}, respectively, are bounded.
\end{lemm}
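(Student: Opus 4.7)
The plan is to reduce both mapping properties to classical Schauder-type bounds for layer potentials by first establishing the correct pointwise and Hölder estimates on the fundamental solution $\Gamma(x,y)$ and its derivatives, and then exploiting the $C^{1,\alpha}$ regularity of $\partial H$ together with the $C^{0,\alpha}$ regularity of $A$ and of the outward normal $n$.

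First, I would record the kernel estimates. Because $A$ is $\alpha$-Hölder, $\mathcal{L}_{1,1}$ satisfies interior $C^{2,\alpha}$ Schauder estimates for its fundamental solution, and a standard freezing-of-coefficients argument (splitting $A(y)=A(x_0)+[A(y)-A(x_0)]$ on small balls centered at the singularity) yields the bounds
\begin{equation*}
|\nabla^k_x\nabla^l_y\Gamma(x,y)|\leq C|x-y|^{2-d-k-l}
\end{equation*}
for $k+l\leq 2$, as well as the Hölder improvement
\begin{equation*}
|\nabla_x\Gamma(x,y)-\nabla_x\Gamma(x',y)|\leq C\,\frac{|x-x'|^\alpha}{|x-y|^{d-1+\alpha}}\quad\text{when } |x-x'|\leq \tfrac{1}{2}|x-y|,
\end{equation*}
and similarly for $\nabla_x\nabla_y\Gamma$. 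These Calderón--Zygmund--type estimates are what drive everything below; I would cite the work of Kenig--Shen (or Hofmann--Kim) for the precise statements in the systems setting with $C^{0,\alpha}$ coefficients.

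Next I would handle $\mathcal{S}\colon C^{0,\alpha}(\partial H)\to C^{1,\alpha}(\partial H)$. Since $\Gamma(\cdot,\xi)$ is weakly singular of order $|x-\xi|^{2-d}$, Young-type inequalities on the surface $\partial H$ immediately give boundedness $L^\infty\to C^{0,\alpha}$, so the content is control of the tangential gradient. For $x\in\partial H$, the tangential derivative $\partial_{T}(\mathcal{S}g)(x)$ can be written as a principal-value singular integral against the Calderón--Zygmund kernel $\partial_{T_x}\Gamma(x,\xi)$, whose homogeneity $|x-\xi|^{1-d}$ matches that of a surface singular integral operator. Using the $C^{1,\alpha}$ parametrization of $\partial H$, the Hölder estimates on $\nabla_x\Gamma$, and the Hölder estimate for $g$, the classical Kellogg--Privalov--Muskhelishvili argument (adapted to variable coefficients as in Mitrea--Mitrea or Kenig--Shen) produces the $C^{0,\alpha}$ bound on $\partial_T(\mathcal{S}g)$ with constant depending only on the $C^{1,\alpha}$ character of $\partial H$, the $C^{0,\alpha}$ seminorm of $A$, and the ellipticity constants.

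For $\mathcal{D}\colon C^{1,\alpha}(\partial H)\to C^{1,\alpha}(\partial H)$ the strategy is the same, but the tangential derivative requires care because the naive kernel $n_i(\xi)a_{ij}^{\alpha\beta}(\xi)\partial_{x_j}\partial_{T_x}\Gamma^{\beta\gamma}(\xi,x)$ is of order $|x-\xi|^{-d}$ and only makes sense in the principal-value sense after an integration by parts on $\partial H$. The standard device is the identity, valid on $C^{1,\alpha}$ surfaces, that expresses $\partial_{T_x}(\mathcal{D}g)$ as a sum of singular integrals of the tangential derivatives of $g$ against kernels of Calderón--Zygmund type (this is where the Hölder continuity of both $n$ and $A$, and the full second-order estimate on $\Gamma$, are used); schematically one replaces $n(\xi)\cdot\nabla_\xi$ applied to $\Gamma$ by a tangential derivative plus an error of lower order, then integrates the tangential derivative off onto $g$. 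Once in this form, the Hölder bound for $\partial_T(\mathcal{D}g)$ follows from the same singular-integral argument as for $\mathcal{S}$, applied now to $\partial_T g\in C^{0,\alpha}$.

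The hardest step, and the one that consumes most of the technical effort, is the last one: producing the tangential-derivative identity for $\mathcal{D}$ in the variable-coefficient systems setting, where $\mathcal{K}$ is not compact and one cannot appeal to Fredholm-based shortcuts. I would carry this out by localizing to a $C^{1,\alpha}$ chart, flattening the boundary, writing the kernel after freezing coefficients at the base point $x_0\in\partial H$ as a sum of a constant-coefficient model kernel (handled by the classical argument) plus an error kernel that inherits the $C^{0,\alpha}$ modulus of $A$, and then proving the error kernel has strictly subcritical singularity on the flattened boundary. The ellipticity uniform in the freezing, together with the $C^{1,\alpha}$ character of $\partial H$, ensures that all the estimates can be pieced together into the global bound claimed in Lemma~\ref{eightyone}.
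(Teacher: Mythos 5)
The paper does not actually prove Lemma~\ref{eightyone}: it is stated with the remark that it is ``more or less known,'' and the paper appeals to the literature (Escauriaza--Fabes--Verchota, Yeh, Kenig--Shen for the $L^2$ theory) rather than giving an argument. Your sketch is the standard route to these Schauder bounds for layer potentials, and it is correct in outline: establish Calder\'on--Zygmund pointwise and H\"older estimates for $\Gamma$, $\nabla_x\Gamma$, and $\nabla_x\nabla_y\Gamma$ by freezing the H\"older-continuous coefficients, then run the Kellogg--Privalov--Muskhelishvili argument on the $C^{1,\a}$ surface, treating the double layer by moving a tangential derivative off the kernel and onto $g\in C^{1,\a}$.

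Two points are worth tightening. First, the claim that a ``Young-type inequality'' immediately gives $\mathcal{S}\colon L^\infty\to C^{0,\a}$ is too quick: the weak singularity of $\Gamma$ gives $L^\infty\to L^\infty$, but the H\"older gain for $\mathcal{S}g$ already requires the H\"older modulus of $\nabla_x\Gamma$, so this step is not qualitatively lighter than the tangential-gradient estimate. Second, in the freezing argument for $\mathcal{D}$ it is worth recording the quantitative gain: the error kernel obtained after subtracting the frozen-coefficient model has singularity $|x-\xi|^{1-d+\a}$ on the boundary (coming from $|A(\xi)-A(x_0)|\le C|\xi-x_0|^\a$ and the estimates on $\nabla_x\nabla_y\Gamma$), and it is precisely this exponent that lets the perturbative term close the H\"older estimate. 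As long as you are citing Hofmann--Kim (and Brown--Kim--Taylor for $d=2$) for the existence and CZ estimates of $\Gamma$, and something like Mitrea--Mitrea or Kenig--Shen for the boundary-integral identities on $C^{1,\a}$ surfaces in the systems setting, your outline fills the gap the paper leaves implicit.
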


From~\eqref{seventynine}, we have the jump relations
\begin{equation}\label{eightythree}
g=w^+-w^-\,\,\,\,\,\,\text{ and }\,\,\,\,\,\,\left(\dfrac{\dd w}{\dd n}\right)^+=\left(\dfrac{\dd w}{\dd n}\right)^{-},
\end{equation}
where $w=\D g$ and $\dd w/\dd n=n\cdot\nabla w$ denotes the normal derivative of $w$.

The following lemma essentially follows from the jump relations~\eqref{eightythree} and regularity problems for the exterior Neumann and interior Dirichlet problems.

\begin{lemm}\label{eightytwo}
There exists a constant $C$ depending on $\kappa_1$, $\kappa_2$ and $H$ such that
\[
\|g\|_{1,\a}\leq C\left\|\left(-\dfrac{1}{2}+\K\right)g\right\|_{1,\a}
\]
for any $g\in C^{1,\a}(\dd H;\R^d)$, where $\|\cdot\|_{1,\a}=\|\cdot\|_{C^{1,\a}(\dd H)}$.
\end{lemm}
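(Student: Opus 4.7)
The plan is to exploit the double-layer representation: given $g\in C^{1,\a}(\dd H;\R^d)$, set $w=\D g$ on $\R^d\setminus\dd H$. Then $\mathcal{L}_{1,1}(w)=0$ in both $H$ and $\R^d\setminus\overline{H}$, the jump relations~\eqref{eightythree} yield $g=w^+-w^-$ with $(\dd w/\dd n)^+=(\dd w/\dd n)^-$, and from~\eqref{seventynine} the interior trace is $w^-=(-\tfrac{1}{2}+\K)g$. The strategy is to estimate $w^+$ in $C^{1,\a}(\dd H)$ by $w^-$ in $C^{1,\a}(\dd H)$, by going through the interior Dirichlet problem on $H$ and then the exterior Neumann problem on $\R^d\setminus\overline{H}$, and finally to recover $g$ via the first jump relation.

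First I would invoke $C^{1,\a}$ boundary regularity for the interior Dirichlet problem $\mathcal{L}_{1,1}(w)=0$ in $H$ with $w=(-\tfrac{1}{2}+\K)g$ on $\dd H$. Since $A$ is $\a$-H\"older continuous (condition~\eqref{six}) and $H$ is a bounded $C^{1,\a}$ domain, standard Schauder theory for elasticity systems gives $\|w\|_{C^{1,\a}(\overline{H})}\leq C\|(-\tfrac{1}{2}+\K)g\|_{C^{1,\a}(\dd H)}$. In particular, the interior normal derivative satisfies
\[
\left\|\left(\dfrac{\dd w}{\dd n}\right)^{\!-}\right\|_{C^{0,\a}(\dd H)}\leq C\left\|\left(-\dfrac{1}{2}+\K\right)g\right\|_{C^{1,\a}(\dd H)}.
\]

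Next I would use the jump relation $(\dd w/\dd n)^+=(\dd w/\dd n)^-$ and the known decay of $\D g$ at infinity (with care in dimension two, e.g.\ by subtracting a suitable rigid motion so that the natural compatibility condition is satisfied, which it is automatically here because $w=\D g$ is a genuine solution). Applying $C^{1,\a}$ regularity for the exterior Neumann problem $\mathcal{L}_{1,1}(w)=0$ in $\R^d\setminus\overline{H}$ with Neumann datum $(\dd w/\dd n)^+$, which again holds for $C^{1,\a}$ domains and $C^{0,\a}$ coefficients by Schauder estimates up to the boundary, yields
\[
\|w^+\|_{C^{1,\a}(\dd H)}\leq C\left\|\left(\dfrac{\dd w}{\dd n}\right)^{\!+}\right\|_{C^{0,\a}(\dd H)}\leq C\left\|\left(-\dfrac{1}{2}+\K\right)g\right\|_{C^{1,\a}(\dd H)}.
\]
Combining these with $g=w^+-w^-$ from~\eqref{eightythree} and the bound on $w^-=(-\tfrac{1}{2}+\K)g$ gives the desired estimate.

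The main obstacle is handling the exterior Neumann problem rigorously: for the elasticity system one must verify uniqueness and Schauder estimates up to the boundary on an unbounded exterior $C^{1,\a}$ domain, which in $d=2$ requires normalizing $w=\D g$ so that its behavior at infinity removes the rigid-motion ambiguity, and in general requires that the data $(\dd w/\dd n)^+$ satisfies the integral compatibility (true here because $w=\D g$ is a bona fide solution decaying at infinity, so integration by parts against rigid motions forces the condition). Once that is in place, the argument is essentially a combination of the jump relations~\eqref{eightythree} with interior/exterior Schauder theory, analogous to the scalar treatment of~\cite{fabes,yeh10,yeh16} adapted to systems.
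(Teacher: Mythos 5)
Your proposal follows precisely the route the paper itself indicates: immediately before Lemma~\ref{eightytwo}, the paper states that the lemma ``essentially follows from the jump relations~\eqref{eightythree} and regularity problems for the exterior Neumann and interior Dirichlet problems,'' and you flesh out exactly this argument (interior Dirichlet Schauder estimate for $w=\D g$ to control $(\dd w/\dd n)^-$, the jump of the conormal derivative to transfer to the exterior, exterior Neumann Schauder estimate to control $w^+$, and $g=w^+-w^-$ to close), while also correctly flagging the rigid-motion/compatibility normalization needed for the exterior problem. This matches the paper's intended proof.
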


%

As mentioned in Section~\ref{section2}, any two connected components of $\R^d\backslash\omega$ are separated by some positive distance $\mathfrak{g}^\omega$.  If $\R^d\backslash\omega=\cup_{k=1}^\infty H_k$, write $H_k^*$ to denote the set
\[
H_k^*=\{x\in\R^d\,:\,\text{dist}(x,H_k)\leq \mathfrak{g}^\omega/4\}.
\]
To prove Lemma~\ref{seven}, it suffices to show the result holds in each $H^*_k$.  Indeed, if $A$ satisfies~\eqref{six}, the boundedness of $\nabla u_{1,\d}$ in the interior of $\omega$ follows from classical results regarding elliptic systems with H\"{o}lder continuous coefficients.

\begin{lemm}
Suppose $A$ satisfies~\eqref{one},~\eqref{two}, and is $\a$-H\"older continuous for some $\a\in (0,1)$.  Suppose $\omega$ is an unbounded $C^{1,\a}$ domain.  If $\mathcal{L}_{1,\d}(u_{1,\d})=\text{div}(f)$ in $H_k^*$ and $u_{1,\d}=0$ on $\dd H_k^*$, then
\[
\|\lrnoe{}\nabla u_{1,\d}\|_{C^{0,\a}(H_k^*)}\leq C\left\{\|\lrnoe{}\nabla u_{1,\d}\|_{L^2(H_k^*)}+\|\lrnoe{-1}f\|_{C^{0,\a}(H_k^*)}\right\},
\]
where $C$ depends only on $\|A\|_{C^{\a}}$, $\a$, $\omega$, $\kappa_1$, $\kappa_2$.
\end{lemm}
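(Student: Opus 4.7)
The equation $\mathcal{L}_{1,\d}(u_{1,\d}) = \div{f}$ is a transmission problem across the interface $\dd H_k$: the coefficient matrix is $A$ in the substrate $H_k^* \setminus \overline{H_k}$ and $\d A$ inside the inclusion $H_k$. From $u_{1,\d} \in H^1$ and $\div{k_\d A \nabla u_{1,\d} + f} = 0$ distributionally, both $u_{1,\d}$ and the conormal flux $n \cdot k_\d A \nabla u_{1,\d}$ are continuous across $\dd H_k$. Away from $\dd H_k$ the coefficient $k_\d A$ is $\a$-H\"older continuous without jumps, so classical interior Schauder estimates (applied after pulling out the constant $k_\d$ in each region) handle the estimate inside each of $H_k$ and $H_k^* \setminus \overline{H_k}$, and boundary Schauder estimates near $\dd H_k^*$ use $u_{1,\d} = 0$. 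The substantive task is $C^{0,\a}$ control of $k_\d \nabla u_{1,\d}$ up to the interface $\dd H_k$ from each side, with constants independent of $\d$.

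For this, I would follow the layer-potential approach of Lemmas~\ref{eightyone}--\ref{eightytwo}. First reduce to the homogeneous case by subtracting a Newtonian-type particular solution $u_f$ of $\mathcal{L}_{1,\d}(u_f) = \div{f}$, built from the fundamental solution $\G$ of $-\div{A \nabla}$ with a layer correction on $\dd H_k$ that absorbs the $k_\d$-jump; the $C^{1,\a}$ regularity of $\G$ (available under~\eqref{six}) yields $\|k_\d \nabla u_f\|_{C^{0,\a}(H_k^*)} \leq C\|k_\d^{-1} f\|_{C^{0,\a}(H_k^*)}$. The residual $w := u_{1,\d} - u_f$ satisfies $\mathcal{L}_{1,\d}(w) = 0$ in $H_k^*$ with Dirichlet data on $\dd H_k^*$. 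Represent $w$ as a combination of a layer potential on $\dd H_k$ and a layer potential on $\dd H_k^*$, using the kernel $\G$ (valid on both sides of $\dd H_k$ since $\mathcal{L}_{1,\d}$ differs from $\mathcal{L}_{1,1}$ only by the scalar $k_\d$ in each region). The jump relations~\eqref{seventynine} and~\eqref{eightythree}, combined with the two transmission conditions on $\dd H_k$ and the Dirichlet condition on $\dd H_k^*$, yield a coupled system of boundary integral equations for the layer densities in $C^{1,\a}(\dd H_k) \times C^{1,\a}(\dd H_k^*)$. Its principal part consists of $\d$-dependent linear combinations of $\pm \frac{1}{2} + \K$ (invertible on $C^{1,\a}$ by Lemma~\ref{eightytwo}) together with off-diagonal coupling terms that are compact on $C^{1,\a}$ by Lemma~\ref{eightyone} and the smoothness of the kernels away from the diagonal.

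The main obstacle is uniform-in-$\d$ invertibility of this coupled system. Operator-norm uniformity is clear because the $\d$-dependence enters only through the scalar jump $[k_\d] = 1 - \d \in [0,1]$, while the underlying kernels depend only on $A$ and $\G$. Injectivity uniformly in $\d$ follows from the energy identity $\int_{H_k^*} k_\d A \nabla w \cdot \nabla w = 0$ for the homogeneous problem: ellipticity~\eqref{two} forces $\nabla w = 0$ in the substrate, then $w \equiv 0$ in $H_k^* \setminus \overline{H_k}$ by the outer Dirichlet condition, and continuity across $\dd H_k$ forces $w$ to vanish in $H_k$ as well. Fredholm theory then delivers $\d$-uniform invertibility, producing the layer densities in $C^{1,\a}$ with bounds controlled by $\|w\|_{L^2(H_k^*)}$ and the reduced data. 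Lemma~\ref{eightyone} converts these into the required $C^{0,\a}$ control of $k_\d \nabla w$ up to $\dd H_k$ from each side; patching with the interior and boundary Schauder estimates noted at the start completes the proof.
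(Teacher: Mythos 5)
Your overall strategy---split off a particular solution handling the inhomogeneity, represent the residual by layer potentials using the fundamental solution of $-\div{A\nabla}$, use the transmission and Dirichlet conditions to obtain boundary integral equations, and then invert them uniformly in $\d$---matches the structure of the paper's proof. Your particular solution $u_f$ differs in construction from the paper's: the paper instead lets $u_1$ solve separate Dirichlet problems in $H_k$ and $H_k^*\backslash H_k$ with zero data on both $\dd H_k$ and $\dd H_k^*$, so $u_2 = u - u_1$ automatically satisfies $\lfloor u_2\rfloor_{\dd H_k} = 0$ and inherits a known flux jump $-\lfloor k_\d^2 A\nabla u_1\rfloor\cdot n$; this is cleaner than a Newtonian potential plus layer correction, but your variant is not the problem.

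The genuine gap is in the uniform-in-$\d$ invertibility step. You write that the principal part consists of $\d$-dependent linear combinations of $\pm\frac{1}{2}+\K$ and invoke Fredholm theory, relying on injectivity (from the energy identity) to upgrade to surjectivity with uniform bounds. But Fredholm theory requires the operator to be a compact perturbation of an invertible operator, which for the diagonal part comes down to compactness of $\K$. The paper explicitly points out that this fails here: ``For systems with variable coefficients, the operator $\K$ is not compact.'' The invertibility of $-\frac{1}{2}+\K$ (Lemma~\ref{eightytwo}) is a standalone fact requiring the Kenig--Shen machinery, and it does not transfer to the family $1 - 2\lambda\D$ (equivalently $\frac{1+\d^2}{2} + (1-\d^2)\K$ after normalization) for $\lambda\in(0,1)$ by Fredholm alone. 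Without compactness, injectivity does not yield surjectivity, and your $\d$-uniform invertibility is unjustified.

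The paper circumvents this by splitting the parameter range. For $\d_0\leq\d\leq 1$ the coefficient contrast is bounded and the estimate follows from transmission-problem Schauder theory with a constant depending on $\d_0$. For $0\leq\d\leq\d_0$ the operator is written as $1 - 2\tfrac{1-\d^2}{1+\d^2}\D = (1-2\D) + \tfrac{4\d^2}{1+\d^2}\D$, i.e., a small perturbation of $-2(-\frac{1}{2}+\K)$, and $\d_0$ is chosen so that $4C\tfrac{\d_0^2}{1+\d_0^2}<1$ with $C$ the constant from Lemma~\ref{eightytwo} and the operator norm of $\D$; then a Neumann-series argument gives invertibility with a bound independent of $\d$. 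This two-regime argument is what replaces your Fredholm step, and it is essential: the single uniform Fredholm argument you propose is not available in the systems setting of this paper.
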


\begin{proof}
Note that if $\d_0\leq \d\leq 1$, then the result follows from general results regarding divergence form elliptic equations with $\a$-H\"older continuous coefficients in $C^{1,\a}$ domains.  Hence, we may assume $0\leq \d\leq\d_0$ for some $\d_0$ to be determined.

Let $u_1$ satisfy the boundary value problem
\[
\begin{cases}
-\text{div}(\d^2A\nabla u_1)=\text{div}(f)\,\,\,\text{ in }H_k \\
-\text{div}(A\nabla u_1)=\text{div}(f)\,\,\,\text{ in }H_k^*\backslash H_k \\
u_1=0\,\,\,\text{ on }\dd H_k\cup \dd H_k^*
\end{cases}
\]
By $C^{1,\a}$ estimates for elliptic systems with $\a$-H\"{o}lder continuous coefficients in $C^{1,\a}$ domains (see~\cite[Chapter 9, Theorem 2.7]{chenwu}), we have
\begin{equation}\label{seventyfour}
\|\lrnoe{}\nabla u_1\|_{C^{0,\a}(H_k^*)}\leq C\|\lrnoe{-1}f\|_{C^{0,\a}(H_k^*)}.
\end{equation}
Set $u_2=u-u_1$, where $u\equiv u_{1,\d}$.  Note then $u_2$ satisfies the equation and jump conditions
\begin{equation}\label{seventytwo}
\begin{cases}
-\text{div}(\lrnoe{2}A\nabla u_2)=0\,\,\,\text{ in } H_k^* \\
\lfloor \lrnoe{2}A\nabla u_2\rfloor_{\dd H_k}\cdot n=-\lfloor \lrnoe{2}A\nabla u_1\rfloor_{\dd H_k}\cdot n \\
\lfloor u_2\rfloor_{\dd H_k}=0, \\
u_2=0\,\,\,\text{ on }\dd H_k^*
\end{cases}
\end{equation}
where $\lfloor g\rfloor_{\dd H_k}=g^+-g^-$, $g^{\pm}=\lim_{t\to 0^+}g(\cdot\pm tn)$, and $n$ denotes the unit vector outward normal to $H_k$.  Hence, for $x\in H_k$,
\begin{equation}\label{sixtyeight}
u_2(x)=-\dint_{\dd H_k}\dfrac{\dd\Gamma}{\dd n_A}(x,y)u(y)\,d\sigma(y)+\Gamma(x,y)\dfrac{\dd u_2}{\dd n_A}(y)\,d\sigma(y),
\end{equation}
where $\dd g/\dd n_A=A\nabla g\cdot n$.  For $x\in H_k^*\backslash H_k$,
\begin{align}
u_2(x)&=
\dint_{\dd H^*_k}\Gamma(x,y)\dfrac{\dd u_2}{\dd n^*_A}(y)\,d\sigma(y) \nonumber\\
&\hspace{5mm}-\dint_{\dd H_k}\Gamma(x,y)\dfrac{\dd u_2}{\dd n_A}(y)\,d\sigma(y)+\dfrac{\dd\Gamma}{\dd n_A}(x,y)u(y)\,d\sigma(y),\label{sixtynine}
\end{align}
where $n^*$ denotes the unit vector outward normal to $H_k^*$.  Then~\eqref{sixtyeight} and~\eqref{sixtynine} imply
\begin{equation}\label{seventy}
u(x)=\left\{\dfrac{1}{2}u(x)-\mathcal{D}u(x)\right\}+\dint_{\dd H_k}\Gamma(x,y)\dfrac{\dd u^-_2}{\dd n_A}(y)\,d\sigma(y),
\end{equation}
and
\begin{align}
u(x)&=
\dint_{\dd H^*_k}\Gamma(x,y)\dfrac{\dd u_2}{\dd n^*_A}(y)\,d\sigma(y) \nonumber\\
&\hspace{5mm}-\dint_{\dd H_k}\Gamma(x,y)\dfrac{\dd u^+_2}{\dd n_A}(y)\,d\sigma(y)-\left\{-\dfrac{1}{2}u(x)-\mathcal{D}u(x)\right\}\label{seventyone},
\end{align}
for $x\in \dd H_k$ (see~\cite[Chapter 7]{mclean}), where $\mathcal{D}\equiv\mathcal{D}_{\dd H_k}$.  Equations~\eqref{seventytwo},~\eqref{seventy}, and~\eqref{seventyone} then imply
\begin{align*}
&\left\{\dfrac{1}{2}+\mathcal{D}\right\}u(x)=\mathcal{S}\left(\dfrac{\dd u^-_2}{\dd n_A}\right)(x) \\
&\left\{\dfrac{1}{2}-\mathcal{D}\right\}u(x)=\dint_{\dd H_k^*}\Gamma(x,y)\dfrac{\dd u_2}{\dd n^*_A}(y)\,d\sigma(y)-\mathcal{S}\left(\dfrac{\dd u^+_2}{\dd n_A}\right)(x),
\end{align*}
where $\mathcal{S}\equiv \mathcal{S}_{\dd H_k}$.  Finally, by~\eqref{seventytwo},
\begin{align*}
&\left[1-2\left(\dfrac{1-\d^2}{1+\d^2}\right)\mathcal{D}\right]u(x) \\
&\hspace{10mm}=\dfrac{2}{1+\d^2}\left\{\dint_{\dd H_k^*}\Gamma(x,y)\dfrac{\dd u_2}{\dd n^*_A}(y)\,d\sigma(y)+\mathcal{S}\left(\lfloor\lrnoe{2}A\nabla u_1\rfloor_{\dd H_k}\cdot n\right)\right\}.
\end{align*}
Choose $\d_0$ small enough so that by Lemma~\ref{eightytwo} we have
\begin{align*}
\|\nabla u_2\|_{C^{0,\a}(\dd H_k)}\leq C\left\{\|\lfloor\lrnoe{2}A\nabla u_1\rfloor_{\dd H_k}\|_{C^{0,\a}(\dd H_k)}+\|\dd u_2/\dd n_A\|_{C^{0,\a}(\dd H^*_k)}\right\},
\end{align*}
for $0\leq \d\leq \d_0$, where $C$ is some constant independent of $\d$.  Indeed, it is sufficient to take $\d_0$ so that
\[
4C\left(\dfrac{\d^2}{1+\d^2}\right)<1\,\,\,\text{ for }\d\leq \d_0,
\]
where $C$ is a constant depending only on the operator norm of $\D$, which is finite by Lemma~\eqref{eightyone}.  By~\eqref{seventytwo} and~\eqref{seventyfour},
\begin{equation}\label{seventyfive}
\|\lrnoe{}\nabla u_2\|_{C^{0,\a}(H_k^*)}\leq C\|\lrnoe{-1}f\|_{C^{0,\a}(H_k^*)}.
\end{equation}
Equations~\eqref{seventyfour} and~\eqref{seventyfive} give the desired estimate.
\end{proof}

\begin{proof}[Proof of Corollary~\ref{threethousand}]
By rescaling, we may assume $R=1$.  To prove the desired estimate, assume $\e\in (0,1/9)$.  Indeed, if $\e\geq 1/9$, then~\eqref{eight} follows from Theorem~\ref{five}.  From Lemma~\ref{seven}, Theorem~\ref{five}, and a ``blow-up argument'' (see the proof of Lemma~\ref{fiftyone}), we deduce
\begin{align*}
\|\lr{}\nabla u_{\e,\d}\|_{L^\infty(B(y,\e))} &\leq C\left(\dashint_{B(y,3\e)}|\lr{}\nabla u_{\e,\d}|^2\right)^{1/2} \\
&\leq C\left(\dashint_{B(x_0,1)}|\lr{}\nabla u_{\e,\d}|^2\right)^{1/2}
\end{align*}
for any $y\in B(x_0,1/3)$.  The desired estimate follows by covering $B(x_0,1/3)$ with balls $B(y,\e)$.
\end{proof}

\bibliography{/Users/bchaserussell/Documents/LaTeX/Research/Bibliography/masterbib}{}
\bibliographystyle{plain}

\vspace{5mm}

\text{Brandon Chase Russell}

\textsc{Department of Mathematics}

\textsc{University of Kentucky}

\textsc{Lexington, KY 40506, USA}

\text{E-mail:} brandon.russell700@uky.edu

\end{document}